\documentclass[a4paper,fleqn]{cas-sc}
\usepackage{blindtext}
\usepackage[utf8]{inputenc}

\usepackage[numbers]{natbib}

\usepackage{graphicx}
\usepackage{mathtools}
\usepackage{amssymb}
\usepackage{amsfonts}
\usepackage{amsthm}
\usepackage{amsmath}
\usepackage{float}
\usepackage{caption}
\usepackage{subcaption}
\usepackage{comment}
\usepackage{enumitem}
\usepackage{mathrsfs}

\usepackage{tikz-cd}

\usepackage{setspace}

\usepackage{hyperref}
\hypersetup{
	colorlinks=true,
	linkcolor=blue,
	filecolor=magenta,      
	urlcolor=cyan,
	pdfpagemode=FullScreen,
}

\usepackage{cases}
\usepackage{orcidlink}

\def\tsc#1{\csdef{#1}{\textsc{\lowercase{#1}}\xspace}}
\tsc{WGM}
\tsc{QE}

\newcounter{assumption}

\newenvironment{assumption}[1]{%
	\def\theassumption{#1}%
	\refstepcounter{assumption}%
	\begin{quote}%
		\textbf{(\theassumption) }\ignorespaces
	}{%
	\end{quote}%
}

\newcommand{\assref}[1]{\hyperref[#1]{(\ref*{#1})}}

\newenvironment{proofof}[1]
{\par\medskip\noindent\textit{Proof of #1.}\quad\ignorespaces}
{\hfill$\square$\par\medskip}

\DeclareMathOperator{\Sym}{Sym}

\DeclareMathOperator{\Gr}{Gr}

\DeclareMathOperator{\rk}{rk}

\DeclareMathOperator{\Cov}{Cov}

\DeclareMathOperator{\Span}{Span}
\DeclareMathOperator{\id}{id}
\DeclareMathOperator{\Imm}{Im}

\DeclareMathOperator{\hor}{hor}
\DeclareMathOperator{\St}{St}

\DeclareMathOperator{\supp}{supp}
\DeclareMathOperator{\vol}{vol}

\DeclareMathOperator{\Tr}{Tr}

\newcommand{\Chi}{\cl X}

\newcommand{\cl}[1]{\mathcal{#1}}
\newcommand{\bb}[1]{\mathbb{#1}}
\newcommand{\one}{\mathbf{1}}

\newtheorem{proposition}{Proposition}
\newtheorem{theorem}{Theorem}
\newtheorem{corollary}{Corollary}
\newtheorem{lemma}{Lemma}
\newdefinition{definition}{Definition}
\newdefinition{remark}{Remark}

\def\R{\mathbb{R}}

\def\E{\mathbb{E}}
\def\P{\mathbb{P}}

\allowdisplaybreaks

\begin{document}
	\let\WriteBookmarks\relax
	\def\floatpagepagefraction{1}
	\def\textpagefraction{.001}
	
	\shorttitle{Regime-Switching Diffusions on Stratified Spaces}    
	
	\shortauthors{L. Marconi, M. Farnè, A. Aue}  
	
	\title [mode = title]{Geometric Regime--Switching Diffusions on Stratified Riemannian Spaces with an Application to Covariance Matrices}  
	
	
	
	%
	
	\author[1]{Leonardo Marconi}[orcid=0009-0000-1597-7875]
	
	\cormark[1]
	
	
	\ead{leonardo.marconi5@unibo.it}
	
	
	\credit{Conceptualization, Methodology, Formal analysis,
		Investigation, Writing -- original draft, Writing -- review \& editing}
	
	\affiliation[1]{organization={Alma Mater Studiorum University of Bologna},
		addressline={Via delle Belle Arti 41}, 
		city={Bologna},
		postcode={40126}, 
		country={Italy}}
	
	\author[1]{Matteo Farnè}[orcid=0000-0002-2403-6599]
	
	
	\ead{matteo.farne@unibo.it}
	
	
	\credit{Project Administration, Funding Acquisition, Validation, Supervision, Writing -- review \& editing}
	
		
	\author[2]{Alexander Aue}[orcid=0000-0003-1553-0509]
	
	
	\ead{aaue@ucdavis.edu}
	
	
	\credit{Resources, Validation, Supervision, Writing – review \& editing}
	
	\affiliation[2]{organization={University of California Davis},
		addressline={1 Shields Ave}, 
		city={Davis},
		postcode={95616}, 
		state={California},
		country={US}}
	
	\cortext[1]{Corresponding author}
	
	
	
	\begin{abstract}
		We construct geometric regime-switching diffusions, a class of Markov processes on locally compact stratified Riemannian state spaces. In contrast with classical regime-switching and stochastic hybrid diffusions, the regimes are not external labels,
		but intrinsic strata of a singular geometric state space. Changes of regime may therefore change dimension, rank, geometry or combinatorial type while the state space maintains its ambient topology.
		
		On each stratum the motion is a conservative Feller diffusion, while inter-stratum transitions are specified by state-dependent jump rates and landing kernels along a directed graph. We characterize the process through a martingale problem on a natural
		stratified core. Under a uniform bound on the total jump rate, we construct a conservative càdlàg strong Markov process by combining the stratumwise diffusions with a Poisson thinning mechanism. Uniqueness is proved using an auxiliary disjoint-union topology and a
		bounded perturbation argument.
		
		Standard Foster--Lyapunov conditions for the extended generator give positive Harris recurrence, uniqueness of the invariant probability measure and, under aperiodicity, \(V\)-uniform geometric ergodicity. The framework is applied to the cone of positive semidefinite covariance matrices, stratified by rank. The resulting process combines fixed-rank covariance diffusions with stochastic rank changes and is \(V\)-uniformly geometrically ergodic.	
	\end{abstract}

	
	\begin{highlights}
		\item We construct geometric regime-switching diffusions on stratified spaces.
		\item Regimes are intrinsic strata rather than external switching labels.
		\item Inter-stratum jumps may change dimension, rank, geometry or type.
		\item The martingale problem is well posed for bounded jump rates.
		\item A geometrically ergodic rank-switching covariance model is constructed.
	\end{highlights}

	\begin{keywords}
		Stratified Riemannian spaces \sep Regime-switching diffusions \sep Stochastic hybrid systems \sep Martingale problems \sep Covariance matrices
	\end{keywords}
	
	\maketitle
	
	
	
	
	

	%
	%
	
	
	
	
	\section{Introduction}

	\subsection{Background, motivation and main contributions}
	
	Diffusion processes on smooth Riemannian manifolds are a classical object of study in probability theory. They arise as solutions of stochastic differential equations driven by Brownian motion, admit descriptions in terms of generators, semigroups and Dirichlet forms, and play a central role in stochastic analysis on manifolds, geometric analysis and geometric statistics. Classical references on manifold-valued diffusions include \cite{brzezniak2000stochastic,hsu2002stochastic,ikeda2014stochastic}.
	
	In many applications, however, the natural state space is not a single smooth manifold. It may instead be a singular space obtained by gluing together smooth pieces of possibly different dimensions. Examples include manifolds with corners, spaces with conical or edge singularities, polyhedral complexes, orbit spaces of group actions, spaces of trees and graphs, the cone of positive semidefinite covariance matrices and moduli spaces arising in geometry and statistics. Such spaces do not generally carry a single global smooth
	structure. They can often be modelled instead by a stratification, namely a
	decomposition into smooth pieces whose intrinsic geometric structure may vary from
	stratum to stratum, for instance in dimension, boundary behavior, rank, or
	combinatorial type. 
	
	The study of analysis and probability on singular spaces has developed along several directions. One influential approach constructs diffusion processes from local regular Dirichlet forms and associated heat kernels on metric spaces \cite{sturm1998diffusion}. A different foundational line starts with Walsh's Brownian motion on a spider, where several one-dimensional branches are glued at a single vertex and the behavior at the singular point is prescribed by a rule for choosing outgoing rays \cite{barlow2006walsh,walsh1978diffusion}. This model became a prototype for diffusions on branched spaces and metric graphs, where the singular set is treated through boundary or interface conditions  \cite{freidlin2000diffusion,HajriRaimond2016}. Related works also study Brownian motions on spaces with varying dimension, where smooth pieces are glued through lower-dimensional interfaces \cite{chen2019brownian}.
	
	In this paper, we adopt a different and complementary point of view. Rather than starting from a single object, such as Dirichlet form, heat kernel or diffusion operator, on the whole singular space, or imposing transition or boundary conditions, we start from Markovian dynamics on the individual smooth strata and specify, as part of the model, how the process is allowed to move between strata. Thus the singular structure is not only a region through which a diffusion may pass, but also a structure along which stochastic changes of dimension, rank, geometry or combinatorial type can occur.

	A second, parallel, line of work concerns stochastic processes whose continuous evolution is combined with discrete switching. A foundational example is Davis' theory of piecewise-deterministic Markov processes, in which trajectories follow deterministic flows between random jump times \cite{davis1984piecewise}. Regime-switching diffusions and switching diffusion processes replace the deterministic inter-jump motion by diffusion dynamics whose coefficients are modulated by a finite or countable external mode process, typically on a common Euclidean phase space \cite{KhasminskiiZhuYin2007,mao2006stochastic,yin2009hybrid}; state-dependent switching and jump-diffusion variants were studied, among others, in \cite{Xi2009,XiZhu2018}. Stochastic hybrid systems further enlarge this picture by allowing mode-dependent continuous domains, stochastic transitions and reset kernels; see \cite{hu2000towards} and references therein. A construction particularly close in spirit to the present paper is the Markov-string viewpoint, going back to Meyer's mixing operation for Markov processes \cite{meyer1975renaissance} and developed in the stochastic hybrid systems literature by \cite{bujorianu2006toward}, where a global process is obtained by concatenating component Markov processes through stopping rules and renewal kernels.  These frameworks provide important predecessors for the pathwise construction used below.
	
	Our framework combines these viewpoints at the level of Markov processes and martingale problems. We consider a locally compact separable metric space \(E\) which is decomposed, measurably, into countably many smooth Riemannian strata,
	\begin{equation}
		\label{eq:state-space}
		E=\bigsqcup_{\alpha\in I}S_\alpha .
	\end{equation}
	On each stratum \(S_\alpha\) the process evolves according to a prescribed conservative Feller diffusion. At random times, determined by state-dependent rates, it jumps to another stratum, and the post-jump location is sampled from a state-dependent probability kernel supported on the target stratum. A directed graph on the set of strata encodes the admissible transitions.

	Classical regime-switching diffusions usually change the dynamics by means of an external mode while keeping a common continuous phase space. General stochastic hybrid systems go further, since different modes may carry different continuous domains and transitions may involve reset kernels. Here, instead, we propose a different framework. The state space is a single locally compact metric space \(E\), decomposed into intrinsic Riemannian strata. The ambient topology and metric are therefore part of the model: they determine the Borel structure, the path space and the notion of convergence between states, and they may encode how one stratum arises as a limit of another. Thus a change of regime is not only a change of domain or coefficients, but a movement inside a stratified geometric object, and can represent a genuine alteration or degeneration of the intrinsic geometry of the state variable.	
	This shift is particularly relevant when there is no canonical smooth structure across strata, and hence no natural global stochastic differential equation on \(E\). 
	The primitive objects are instead intrinsically local and geometrically heterogeneous and the problem is then to turn these data into a single well-posed Markov process on \(E\). This probabilistic formulation makes it possible to establish existence, uniqueness, strong Markovness and ergodic properties without requiring a global differential structure on the ambient singular space. Key probability references include \cite{ethier2009markov,meyn1993stability3,meyn2012markov}.
	
	Our main theoretical contribution is, therefore, to show that conservative Feller diffusions living on different smooth manifolds can be assembled, through state-dependent inter-stratum jumps, into a conservative càdlàg strong Markov process on a single stratified state space, with a well-posed martingale problem and standard ergodic criteria.	
	We call the resulting processes geometric regime-switching diffusions. They retain the hybrid character of classical switching models, but the regimes are now intrinsic geometric pieces of a singular state space rather than external labels attached to a fixed continuous phase space. Accordingly, our objective is not to construct a diffusion generated by a global differential operator on the singular space, nor to develop a general theory of stochastic differential equations across strata. Instead, we deliberately formulate the theory at the level of Markov families and martingale problems, where the local geometric dynamics and the inter-stratum jump mechanism can be combined in a unified probabilistic framework.
	
	A guiding example, developed in Section~\ref{sec:cov-example}, is the cone of positive semidefinite covariance matrices. This cone is naturally stratified by rank. Each fixed-rank component is a smooth manifold, while the full cone is singular because strata of different ranks meet at the boundary. In this example the rank is not an auxiliary discrete variable superimposed on the model: it is a geometric property of the covariance matrix itself. When the rank changes, the process moves between manifolds of different dimensions and different tangent structures; the local coordinates and the corresponding generator change accordingly. The covariance-cone model therefore illustrates a situation in which the ambient topology and metric structure of the whole space are part of the modelling problem, and in which continuous fixed-rank evolution and stochastic rank changes should be treated as two components of the same Markovian dynamics.

	\subsection{Organization of the paper}
	\label{subsec:outline}
	
	The paper is organized as follows. Section~\ref{sec:stratified-jump-diffusions} introduces the abstract framework,
	while Section~\ref{sec:cov-example} is devoted to the rank-switching
	covariance-cone model and verifies the assumptions of the general theory in
	that setting.
	
	Section \ref{subsec:markov-conventions} introduces the Markov process notation and conventions used throughout the paper.	
	Section~\ref{subsec:set up} introduces the abstract stratified setting and the standing assumptions. The state
	space \(E\) is decomposed as in \eqref{eq:state-space}. On every
	stratum we are given a conservative Feller diffusion with a generator and a core
	\(\cl C_\alpha\). Inter-stratum transitions are encoded by a directed graph: each edge has a tail stratum, a target stratum, a state-dependent jump rate and a landing
	kernel supported on the target stratum. Under a uniform bound on the total jump rate, these data
	define a stratified martingale operator
	on a stratified test space, that is the direct sum of the cores \(\cl C_\alpha\).
	
	Section~\ref{subsec:existence} proves existence. For every initial condition, we construct a conservative
	càdlàg process by interlacing the prescribed stratumwise diffusions with jumps generated by a
	Poisson thinning procedure. Between two successive jump times the process evolves inside a single
	stratum according to the corresponding diffusion. At a jump time, an outgoing edge is selected
	according to the structural jump rates, and the post-jump location is sampled from the associated
	landing kernel. The uniform bound on the total jump rate prevents accumulation of jump times and
	gives non-explosion. We carry out the construction through an auxiliary input space, built by
	the Ionescu--Tulcea theorem, whose coordinates contain candidate diffusion paths and independent
	marked Poisson random measures.
	
	Section~\ref{subsec:strong-markov} establishes the strong Markov property. The proof uses the standard restart argument for processes obtained by concatenating Markovian pieces. One first proves the restart property at the canonical jump times,
	using the product structure of the input space. Then, one proves the same property at stopping times
	occurring before the next jump, using the strong Markov property of the stratumwise diffusions and
	the independent increments of the Poisson random measure.
	
	Section \ref{subsec:uniqueness} proves uniqueness. The key idea is to use an auxiliary
	disjoint-union topology on $E$.
	This topology is only a functional-analytic device: the process itself remains an \(E\)-valued
	process while the Borel \(\sigma\)-field is unchanged.
	With this auxiliary topology, the direct sum of the stratumwise Feller semigroups defines a
	Feller semigroup, and the
	stratified test space is a core for its generator. The jump part is then treated
	as a bounded perturbation, with norm controlled by the uniform bound on the
	total jump rate. A resolvent argument yields uniqueness of the martingale problem.

	Section~\ref{subsec:harris} records the ergodic consequences needed later. We do not develop a new Harris
	recurrence theory, but apply standard Foster--Lyapunov criteria for continuous-time Markov
	processes to the class constructed in the previous sections. A Lyapunov drift condition for the
	extended generator implies positive
	Harris recurrence and its standard ergodic consequences, including
	uniqueness of the invariant probability measure and almost-sure
	convergence of time averages. If, in addition, the process is aperiodic, then the same
	framework gives \(V\)-uniform geometric ergodicity.
	
	Section \ref{sec:cov-example} applies the general framework to the cone of positive semidefinite covariance matrices, stratified
	by rank. On each fixed-rank stratum the intra-stratum dynamics is built using the diffeomorphic description
	\[S_r:=\{\Sigma \in \Cov(n)\,|\, \rk(\Sigma)=r\}\cong (\St(n,r)\times\Sym(r))/O(r),\]
	where $\St(n,r)$ denotes the Stiefel manifold, $\Sym(r)$ the Euclidean space of symmetric $r$-dimensional matrices and $O(r)$ the group of \(r\times r\) orthogonal matrices. 
	Intra-stratum diffusions are defined using a horizontal diffusion on $\St(n,r)$ together with an Ornstein--Uhlenbeck dynamics on $\Sym(r)$. Rank changes \(r\mapsto r\pm1\) are then introduced through
	the jump mechanism. We verify
	the assumptions of the general theory, prove a Lyapunov drift estimate, establish petite-set and
	aperiodicity conditions, and conclude that the resulting rank-switching covariance process is a
	conservative strong Markov process with a well-posed martingale problem and \(V\)-uniform geometric
	ergodicity for \(V(\Sigma)=1+\Tr(\Sigma)\).

	Finally, the appendices collect the technical material used in the main text. Appendix~\ref{appsec:interlacing} contains the
	measurability details for the interlacing construction, Appendix~\ref{appsec:strong-markov} gives the proof of the strong
	Markov property, Appendix~\ref{appsec:uniqueness} proves the auxiliary direct-sum semigroup result, and Appendix~\ref{appsec:cov-application}
	contains the covariance-stratum verifications.

	\section{Geometric regime-switching diffusions}
	\label{sec:stratified-jump-diffusions}

	\subsection{Markov process conventions}
	\label{subsec:markov-conventions}
	We collect here the Markov process terminology used throughout the paper. Let
	\(E\) be a locally compact separable metric space, equipped with its Borel
	\(\sigma\)-algebra \(\cl B(E)\). Denote by $B_b(E)$ the space of bounded Borel functions on $E$.

	Let \(X=(X_t)_{t\geq 0}\) be an \(E\)-valued process evolving on a probability space \((\Omega,\bb P_x)\), where $x \in E$ is the starting condition of $X$. 
	We write
	\(
	\cl F_t^0:=\sigma(X_s:0\leq s\leq t)
	\)
	for the raw canonical filtration, and \((\cl F_t^x)_{t\geq 0}\) for the right-continuous completed natural filtration under $\P_x$.
	
	Since \(E\) is locally compact and separable, we may fix an increasing sequence
	\((O_m)_{m\geq 1}\) of open relatively compact subsets of \(E\) such that
	\(
	\overline O_m \subset O_{m+1},
	\text{ and }
	\bigcup_{m\geq 1} O_m = E .
	\)
	Here relatively compact means that \(\overline O_m\) is compact in \(E\). We shall call
	such a sequence a relatively compact exhaustion of \(E\).
	For each \(m\geq 1\), define the exit time from \(O_m\) by
	\(
	T_m := \inf\{t\geq 0:X_t\notin O_m\},
	\)
	with the convention \(\inf\varnothing=\infty\). Since \(O_m\subset O_{m+1}\), the
	sequence \((T_m)_{m\geq 1}\) is increasing. We define the lifetime, or explosion time,
	of \(X\) by
	\(
	\zeta := \lim_{m\to\infty} T_m .
	\)
	The process is called non-explosive, or conservative, if
	\(
	\P_x(\zeta=\infty)=1, \text{ for all } x\in E .
	\)
		
	We denote by
	\(
	D_E:= D([0,\infty);E),
	\)
	the Skorokhod space of all càdlàg paths \(\omega:[0,\infty)\to E\), where càdlàg means
	right-continuous with left limits. A non-explosive process can be described as a family of probability measures $(\P_x)_{x \in E}$ on \(D([0,\infty);E)\) together with the canonical process defined by
	\(
	X_t(\omega):=\omega(t),
	\) for  \(t\geq 0\).
	
	We shall use the following notation for laws. If \(Y\) is a random element with values in a measurable space \((F,\cl F)\), defined on a probability space with probability measure \(\P\), we write
	\(
	\cl L_{\P}(Y):=\P\circ Y^{-1}\in \cl P(F)
	\)
	for the law of \(Y\). When the underlying probability measure is indexed by an initial condition, for instance \(\P_x\), we write
	\(
	\cl L_x(Y):=\cl L_{\P_x}(Y),
	\)
	provided that the underlying probability measure is clear from the context.

	A Markov family on \(E\) is a family of probability measures
	\(
	(\P_x)_{x\in E}
	\)
	on path space such that
	\(
	\P_x(X_0=x)=1,
	\)
	and, for all \(B \in \cl B(E)\),
	\(\P_x(X_t \in B \,|\, \cl F_s)=\P_x(X_t \in B \,|\, X_s).\)
	Its transition function is denoted by
	\(
	P_t(x,B):=\P_x(X_t\in B).
	\)
	We define the operator $P_t$ acting on $f \in B_b(E)$ and $\sigma$-finite measures $\mu$ on $E$ by
	\[
	P_t f(x):=\mathbb E_x[f(X_t)]
	=
	\int_E f(y)\,P_t(x,dy), \quad \mu P_t(B):=\int_E P_t(x,B)\,\mu(dx), \ B \in \cl B(E).
	\]
	The family \((P_t)_{t\geq 0}\) is called the transition semigroup of the process.

	The Markov family is called strong Markov if, for every \(x\in E\), every finite
	\((\cl F_t^x)\)-stopping time \(T\), every \(t\geq 0\), and every \(f\in B_b (E)\),
	\(
	\mathbb E_x[f(X_{T+t})\mid \cl F_T^X]
	=
	P_t f(X_T)
	\text{ } \P_x\text{-a.s.}
	\)
	Equivalently, conditionally on \(\cl F_T\), the post-\(T\) evolution has the same
	law as a fresh copy of the process started from the random state \(X_T\).
	
	The Markov family is called Borel if the map
	\(
	E \to \cl P(D_E), \  x\mapsto \P_x
	\)
	is Borel measurable. This implies the following, weaker and more usual condition: for every $t \ge 0$ and \(B \in \cl B(E)\) the map $x \to P_t(x,B)$ is measurable. We call a Borel right process a
	càdlàg strong Markov family with Borel transition function.

	Finally, let \(A:\cl D\to B_b(E)\) be a linear operator, where
	\(\cl D\subset B_b(E)\). A probability measure \(\P\) on
	\(D([0,\infty);E)\) is said to solve the martingale problem for \((A,\cl D)\)
	with initial condition \(x\in E\) if
	\(
	\P(X_0=x)=1
	\)
	and, for every \(f\in\cl D\), the process
	\[
	M_t^f
	:=
	f(X_t)-f(X_0)-\int_0^t Af(X_s)\,ds,
	\qquad t\geq 0,
	\]
	is an \((\cl F_t^X)\)-martingale under \(\P\). The martingale problem is
	called well-posed if, for every \(x\in E\), there exists a unique solution in law.
	
	\subsection{Set-up and assumptions}
	\label{subsec:set up}

	Let $(E,d)$ be a locally compact separable metric space with Borel
	$\sigma$--algebra $\cl B(E)$.
	
	\begin{definition}
		\label{def:stratified-space}
		Let $I$ be a countable index set and suppose
		\(
		E = \bigsqcup_{\alpha\in I} S_\alpha
		\)
		is a partition into pairwise disjoint subsets such that:
		\begin{enumerate}[label=(S\arabic*),ref=S\arabic*]
			\item for every $\alpha\in I$, $S_\alpha$ is a connected, smooth, finite--dimensional
			manifold without boundary, equipped with a Riemannian metric $g_\alpha$;
			\item\label{S2} the inclusion $\iota_\alpha: S_\alpha \hookrightarrow E$ is a homeomorphism onto its image
			and this image is a Borel subset of \(E\).
		\end{enumerate}
		We call $(E,\{S_\alpha,g_\alpha\}_{\alpha\in I})$ a stratified Riemannian state space in the Borel sense.
	\end{definition}

	We now introduce the continuous dynamics inside each stratum. The formalization is deliberately local: once the current stratum is fixed, the process evolves as a conservative Feller diffusion on that smooth manifold, with a core on which the generator can be tested. No compatibility of the differential operators across different strata, and no global smooth structure on \(E\), is required.
	
	\begin{assumption}{D}
		\label{ass:diffusions}
		For every \(\alpha\in I\), there is a conservative càdlàg Markov family
		\(
		(\bb Q_x^\alpha)_{x\in S_\alpha}
		\)
		on \(D([0,\infty);S_\alpha)\), with canonical process
		\(
		\eta_t(\omega)=\omega(t),
		\)
		such that the following properties hold.
		
		\begin{enumerate}[label=(D\arabic*),ref=(D\arabic*)]
			\item \label{it:D1}
			The map
			\(
			S_\alpha \to \cl P(D([0,\infty);S_\alpha)), \  x\mapsto \bb Q_x^\alpha
			\)
			is Borel measurable, where \(\cl P(D([0,\infty);S_\alpha))\) denotes the space of probability measures on the Skorokhod space.

			\item \label{it:D2}
			For every \(x\in S_\alpha\) the law \(\mathbb Q_x^\alpha\) is concentrated on
			paths starting from \(x\), i.e., 
			\(
			\bb Q_x^\alpha(\eta_0=x)=1,
			\)
			and \(\eta\) is \(S_\alpha\)-valued and defined for all times, that is
			\(
			\bb Q_x^\alpha(\eta_t\in S_\alpha\ \text{for all }t\geq0)=1.
			\)
			The paths are continuous in the topology of \(S_\alpha\), equivalently in the topology
			induced on \(S_\alpha\) by the ambient space \(E\).
			
			\item \label{it:D3}
			Let
			\(
			P_t^\alpha g(x)
			:=
			\mathbb E_{\bb Q_x^\alpha}[g(\eta_t)],
			\,
			g\in B_b(S_\alpha).
			\)
			Then \((P_t^\alpha)_{t\geq0}\) restricts to a strongly continuous contraction
			semigroup on \(C_0(S_\alpha)\). That is, $P_t^\alpha C_0(S_\alpha)\subset C_0(S_\alpha),$ and 
			\(
			\|P_t^\alpha g \|_{\infty} \le \|g\|_{\infty}, \text{ and }
			\lim_{t\to0}
			\|P_t^\alpha g-g\|_\infty=0,
			\) for all $g \in C_0(S_\alpha)$.
			
			\item\label{it:D4}
			Let \(L_\alpha\) denote the generator of the Feller semigroup
			\((P_t^\alpha)_{t\geq0}\) on \(C_0(S_\alpha)\). We assume that there exists a linear subspace $\cl C_\alpha \subset C_0(S_\alpha)$ such that
			\(
			\cl C_\alpha\subset D( L_\alpha),
			\)
			and \(\cl C_\alpha\) is a core for \(L_\alpha\): for every
			\(g\in D(L_\alpha)\), there exist \((g_n)_{n \ge 0}\subset \cl C_\alpha\) such that
			\(
			\|g_n-g\|_\infty
			+
			\|L_\alpha g_n-L_\alpha g\|_\infty
			\to0
			\) as \(n \to +\infty\).
			Moreover, we assume that $\cl C_\alpha$ is measure determining on $S_\alpha$, that is if \(\mu\)
			and \(\nu\) are finite Borel measures on \(S_\alpha\) and
			\[
			\int_{S_\alpha} g\,d\mu
			=
			\int_{S_\alpha} g\,d\nu,
			\qquad g\in\cl C_\alpha,
			\]
			then \(\mu=\nu\).

		\end{enumerate}
		
	\end{assumption}
	\begin{remark}
		Since
		$(\bb Q_x^\alpha)_{x\in S_\alpha}$ is a Markov family with transition semigroup
		$(P_t^\alpha)_{t\geq 0}$, and since \ref{it:D4} gives
		$\cl C_\alpha\subset D(L_\alpha)$,
		Dynkin's formula for strongly continuous semigroups yields that
		\[
		g(\eta_t)-g(\eta_0)-\int_0^t  L_\alpha g(\eta_s)\,ds,
		\qquad t\geq 0,
		\]
		is a $\bb Q_x^\alpha$-martingale for every $x\in S_\alpha$ and every
		$g\in \cl C_\alpha$. 
	\end{remark}

	We next encode how the process is allowed to jump between strata via a directed graph of state--dependent jump kernels. The directed graph specifies which transitions are allowed, the rates determine when such transitions occur, and the landing kernels determine the post-jump location. The uniform bound on the total rate is used twice: it prevents accumulation of jump times in the interlacing construction, and it makes the jump operator a bounded perturbation in the uniqueness argument.
	
	\begin{assumption}{J}
		\label{ass:jumps}
		There is a directed graph $G=(I,\cl E)$ with vertex set $I$ and countable edge set $\cl E$.
		For each edge $e\in \cl E$ we write $\alpha(e)\in I$ for its tail and $\beta(e)\in I$
		for its head, and we are given:
		\begin{itemize}
			\item a Borel measurable probability kernel
			\(
			K_e:S_{\alpha(e)}\times \cl B(E)\to [0,1]
			\)
			such that
			\begin{equation}
				\label{eq:no-jump-in-place}
				K_e(x,S_{\beta(e)})=1,\quad x\in S_{\alpha(e)}, \qquad\text{and}\qquad K_e(x,\{x\})=0;
			\end{equation}
			\item a Borel measurable jump rate
			\(
			\lambda_e:S_{\alpha(e)}\to [0,\infty).
			\)
		\end{itemize}
		For $x\in S_\alpha$ we define the total jump rate out of $x$ and the associated jump kernel respectively by
		\[
		\Lambda(x):=\sum_{e\in\cl E:\,\alpha(e)=\alpha}\lambda_e(x) \in [0,+\infty], \qquad \nu(x,B)
		:=\sum_{e\in\cl E:\,\alpha(e)=\alpha}\lambda_e(x)\,K_e(x,B), \quad
		B\in\cl B(E).
		\]
		We assume that the total rate is uniformly bounded, that is
		\(
			\sup_{x\in E}\Lambda(x)\le \overline\Lambda<\infty.
		\)
	\end{assumption}

	Assumptions~\assref{ass:diffusions} and~\assref{ass:jumps} will be in force throughout
	Section~\ref{sec:stratified-jump-diffusions}. The boundedness of $\Lambda$ ensures that
	the jump part can be realized by thinning from a homogeneous Poisson process of rate
	$\overline\Lambda$ \cite{lewis1979simulation}, and will be crucial for non-explosion and for the bounded
	perturbation argument used in the uniqueness result.

	The following definition packages the local generators and the inter-stratum jump mechanism into a single martingale operator on \(E\). The test space \(\cl D_0\) should be understood as the finite-stratum test space: its elements are global bounded Borel functions obtained by choosing core functions on finitely many strata and setting the remaining stratum components equal to zero.

	\begin{definition}
		\label{def:pre-generator}
		Set
		\(\cl D_0:=\bigoplus_{\alpha\in I} \cl C_\alpha,\)
		i.e.\ \(\cl D_0\) consists of all bounded Borel functions \(f:E\to\mathbb R\) such that \(f|_{S_\alpha}\in\cl C_\alpha\) for every \(\alpha\in I\), and \(f|_{S_\alpha}=0\) for all but finitely many \(\alpha\in I\).
		
		For $f\in\cl D_0$ and $x\in S_\alpha$ we define \(Jf(x):=\int_E(f(y)-f(x))\nu(x,dy)\) and
		\[
			A f(x)
			:=L_\alpha(f|_{S_\alpha})(x)
			+Jf(x).
		\]
		This yields a linear operator
		$A:\cl D_0\to B_b(E)$, where $B_b(E)$ denotes the space of bounded Borel functions on $E$.
	\end{definition}

	We now show that under the standing assumptions the martingale problem
	for $(A,\cl D_0)$ is well-posed.

	\subsection{Interlacing construction}
	\label{subsec:existence}
	The next result shows that the local ingredients specified in \assref{ass:diffusions} and \assref{ass:jumps} can be
	assembled into a genuine global Markov process on the original stratified state space.

	\begin{theorem}
		\label{thm:existence}
		Suppose Assumptions~\assref{ass:diffusions} and~\assref{ass:jumps} hold.
		Then for every starting point $x\in E$ there exists a filtered
		probability space $(\Omega,\cl F_t^x,\P_x)$ and an
		$E$--valued càdlàg process $(X_t)_{t\ge 0}$ such that:
		\begin{enumerate}[label=(\roman*),ref=\roman*]
			\item $X_0=x$ $\P_x$--a.s.;
			\item between its jump times, the process remains in a single stratum:
			if $X_{t-}\in S_\alpha$ and $X_t=X_{t-}$, then $X_s\in S_\alpha$
			for all $s$ in a neighborhood of $t$; on each such interval,
			$X$ evolves as the diffusion with generator $L_\alpha$;
			\item the jumps of $X$ are governed by the structural kernels and rates:
			whenever $X_{t-}=x\in S_\alpha$, jumps of type $e\in\cl E$ with $\alpha(e)=\alpha$
			occur with conditional intensity $\lambda_e(x)$, and conditional on such a jump of type $e$,
			the post-jump location has law $K_e(x,\cdot)$;
			\item for every $f\in\cl D_0$, the process
			\[
			M_t^f=f(X_t)-f(X_0)-\int_0^t A f(X_s)\,ds
			\]
			is an $(\cl F_t^x)$--martingale. In particular, the law of $X$
			on $D([0,\infty);E)$ is a solution of the martingale problem for
			$(A,\cl D_0)$ with initial condition $x$.
		\end{enumerate}
		Moreover, $X$ is non-explosive and the map $ x \mapsto \P_x$ is Borel.
	\end{theorem}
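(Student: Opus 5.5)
The process will be built by interlacing stratumwise diffusions with jumps obtained from Poisson thinning at the dominating rate \(\overline\Lambda\).

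\textbf{Input space.} Following the outline in Section~\ref{subsec:outline}, I will use the Ionescu--Tulcea theorem to build an auxiliary space carrying the following sequences:
\begin{itemize}
\item candidate diffusion paths \(\eta^{(k)}\) for \(k\ge 0\);
\item i.i.d.\ Exp\((\overline\Lambda)\) interarrival times \(\tau_j\);
\item i.i.d.\ uniform marks \(U_j\) on \([0,1]\);
\item auxiliary uniforms used to sample landing points.
\end{itemize}
The kernel at step \(k\) is \(\bb Q^{\alpha}_{y}\), where \(y\) is the current post-jump location and \(\alpha\) its stratum. Borel measurability of \(y\mapsto \bb Q^\alpha_y\) from \ref{it:D1} and Borel measurability of \(K_e,\lambda_e\) make each kernel measurable.

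\textbf{Recursive construction.} Set \(T_0=0\) and \(Y_0=x\). On \([T_k,T_{k+1})\) put \(X_t=\eta^{(k)}_{t-T_k}\). Candidate times are the Poisson epochs after \(T_k\). At a candidate time \(s\), the jump is accepted if \(U\le \Lambda(X_{s-})/\overline\Lambda\). When accepted, an edge \(e\) is selected with probability \(\lambda_e(X_{s-})/\Lambda(X_{s-})\), and the new point is drawn from \(K_e(X_{s-},\cdot)\). Rejected epochs do nothing. The accepted jump times form a subset of the epochs of a rate-\(\overline\Lambda\) Poisson process, so \(T_k\to\infty\) almost surely; this gives non-explosion and a globally defined càdlàg \(E\)-valued path. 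Continuity inside each stratum comes from \ref{it:D2} and (\ref{S2}).

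\textbf{Properties (i)--(iii).} Properties (i) and (ii) hold by construction. Property (iii) follows from the standard thinning computation: conditionally on the past, the compensator of the counting process of type-\(e\) jumps is \(\int_0^t \lambda_e(X_{s})\,ds\), and the landing law is \(K_e\). The condition \(K_e(x,\{x\})=0\) ensures that accepted jumps really are discontinuities. It also ensures that (ii) correctly identifies jump times as the times with \(X_t\neq X_{t-}\).

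\textbf{Martingale property (iv).} For \(f\in\cl D_0\), I decompose
\[
f(X_t)-f(x)=\sum_k\bigl[f(X_{t\wedge T_{k+1}-})-f(X_{t\wedge T_k})\bigr]+\sum_{T_k\le t,\,k\ge1}\bigl[f(X_{T_k})-f(X_{T_k-})\bigr].
\]
\begin{itemize}
\item \emph{Continuous part.} On each inter-jump interval, the remark after Assumption~\assref{ass:diffusions} gives that \(f(\eta_t)-\int_0^t L_\alpha f(\eta_s)\,ds\) is a \(\bb Q^\alpha_y\)-martingale. The stopping time \(T_{k+1}\) is determined by Poisson data independent of \(\eta^{(k)}\), so optional stopping applies after conditioning on the Poisson data.
\item \emph{Jump part.} The jump sum minus \(\int_0^t Jf(X_s)\,ds\) is a martingale, because it is an integral against the compensated marked Poisson measure with bounded integrand \(\le 2\|f\|_\infty\overline\Lambda\).
\item \emph{Summing.} Boundedness of \(Af\) (at most finitely many strata are involved, and \(\|Jf\|\le 2\overline\Lambda\|f\|\)) together with \(E[\#\{k:T_k\le t\}]\le\overline\Lambda t\) justifies summing over \(k\) by dominated convergence.
\end{itemize}

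\textbf{Main obstacle.} I expect the hardest step to be the rigorous filtration bookkeeping in (iv). The process must be a martingale in the filtration generated by \(X\) (right-continuous and completed), while the natural computation uses the larger input filtration that includes the future candidate paths \(\eta^{(k)}\). I plan to work in the enlarged filtration
\[
\cl G_t=\sigma\bigl(\eta^{(k)}_{(s-T_k)\vee0}\mathbf 1_{T_k\le s},\ \text{Poisson data up to }s:\ s\le t\bigr),
\]
show that \(M^f\) is a \(\cl G_t\)-martingale using the product structure of the input space, and then project down to \(\cl F^x_t\subset\cl G_t\).

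\textbf{Borel dependence on \(x\).} Measurability of \(x\mapsto\P_x\) will follow because the whole path is a jointly measurable function of \(x\) and the input variables, where the inputs are realized by measurable randomization of the kernels. Taking the image measure on \(D([0,\infty);E)\) then gives a Borel map.
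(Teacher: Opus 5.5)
Your proposal is correct and follows essentially the same route as the paper: an Ionescu--Tulcea input space of candidate diffusion paths, Poisson thinning at the dominating rate $\overline\Lambda$ with non-explosion by comparison with that Poisson process, thinning compensators for (iii), and for (iv) a martingale argument in an enlarged construction filtration that is then projected down to $(\cl F_t^x)$. The paper uses a fresh marked Poisson measure per segment, whose second mark does both acceptance and edge selection, but that is only bookkeeping.

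One correction: $T_{k+1}$ is not determined by the Poisson data alone, because acceptance depends on $\eta^{(k)}$ through $\Lambda(X_{s-})$. However, conditionally on the Poisson data, $T_{k+1}-T_k$ is a stopping time for the natural filtration of $\eta^{(k)}$, so your optional-stopping step still goes through.
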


	\begin{proof}
		The proof is organized so as to separate the probabilistic construction from the auxiliary measurability and pathwise details. In the main text we describe the interlacing mechanism, show non-accumulation of jump times from the uniform rate bound, and identify the martingale operator. The measurability statements and technical checks needed to make the construction rigorous are collected in Appendix~\ref{appsec:interlacing}.
		
		We first realize the jump kernels by measurable maps. Since \(E\) is a
		locally compact separable metric space, it is a standard Borel space. Hence,
		for every edge \(e\in\cl E\), there exists a Borel measurable map
		\(
		F_e:S_{\alpha(e)}\times(0,1)\to S_{\beta(e)}
		\)
		such that, if \(U\sim {\rm Unif}(0,1)\), then, for \(x\in S_{\alpha(e)}\), we have
		\(
		F_e(x,U)\sim K_e(x,\cdot).
		\)
		
		For each \(\alpha\in I\), fix a countable enumeration of the outgoing edges
		\(
		\{e\in\cl E:\alpha(e)=\alpha\}
		=
		\{e_{\alpha,1},e_{\alpha,2},\ldots\}.
		\)
		For $x \in S_\alpha$ and $0\le s\le \Lambda(x)$ define
		\(
		\Gamma_{\alpha,0}(x):=0, \ 
		\Gamma_{\alpha,n}(x):=
		\sum_{j=1}^n \lambda_{e_{\alpha,j}}(x),
		\)
		and the measurable edge
		selector
		\(
		e(x,s):=e_{\alpha,n}
		\text{ where }
		n:=\min\{j\ge 1:s\le \Gamma_{\alpha,j}(x)\}.
		\)
		For \(s>\Lambda(x)\), the value of \(e(x,s)\) is irrelevant and we fix it to $0$.
		
		We now construct the probability space. Let
		\(
		Z:=(0,\infty)\times(0,\overline\Lambda]\times(0,1),
		\)
		and denote by $N_Z$ the space of locally finite counting measures on $Z$. Endow $N_Z$ with the $\sigma$-algebra $\cl N_Z$ generated by the evaluation maps
		\(
		m\mapsto m(B)
		\) with $B \in \cl B(Z)$ relatively compact.
		Since \(Z\) is a locally compact separable metric space, this is a
		standard Borel space \cite[Ch. 2 and 6]{LastPenrose2017}.
		We consider the space
		\((H, \cl H) :=(D_E \times N_Z, \cl B(D_E) \otimes \cl N_Z).\)
		For \(x\in E\), let \(\alpha(x)\) be the unique stratum index such that
		\(x\in S_{\alpha(x)}\), and let \(\bb Q_x:= \bb Q_x^{\alpha(x)}\) denote the law on
		\( (D_E, \cl B(D_E))\) of the conservative intra-stratum diffusion on
		\(S_{\alpha(x)}\) started from \(x\). We view this path as an \(E\)-valued
		path through the inclusion \(S_{\alpha(x)}\subset E\). 
		
		Let \(\bb M\) be the law of a Poisson random measure on $(N_Z, \cl N_Z)$	with intensity \(dt\,ds\,du\). Define the one-step input kernel $\bb K: E \times \cl H \to [0,1]$ by
		\(
		\bb K (x;dh)=\bb K(x;d\eta,dm)
		:=
		\bb Q_x(d\eta)\,\bb M(dm),
		\)
		where \(\eta\in D_E\) is a diffusion path started from \(x\),
		\(m\) is an independent Poisson random measure, and $h=(\eta,m)$.
		
		Given \(x\in E\), \(\eta\in D_E\) and a point measure \(m\), define
		the first accepted time by
		\[
		\sigma(h)
		:=
		\inf\Bigl\{
		r>0:
		m\bigl(\{r\}\times(0,\Lambda(\eta(r-))]\times(0,1)\bigr)>0
		\Bigr\},
		\]
		with the convention \(\inf\varnothing=\infty\). If \(\sigma(h)<\infty\) and the first accepted atom is unique, write its marks as
		\((s,u)\).  Define the corresponding
		update map by
		\(
		\Psi(h)
		:=
		F_{e(\eta(\sigma(h)-),s)}(\eta(\sigma(h)-),u).
		\)
		If $\sigma=\infty$ or the accepted mark is not unique, define $\Psi$ arbitrarily, e.g. $\Psi(h)=x_\ast \in E$. The maps $\sigma:H \to (0 ,+\infty]$ and $\Psi:H \to E$ are measurable, as shown in Lemma~\ref{lem:s-j-meas} in Appendix~\ref{appsec:interlacing}.

		Now, select $x \in E$.	
		The Ionescu--Tulcea theorem (\cite[Thm. IV.4.7]{ccinlar2011probability}) gives a unique
		probability measure $\Pi_{x}$ on $(J,\cl J):=(H^{\bb N_0}, \cl H^{\otimes \bb N_0})$ such that
		conditionally on the previously constructed variables and on the
		starting state \(x\),
		\(
		h_n\sim \bb K(\Psi(h_{n-1});\cdot).
		\)
		Indeed, we have
		\begin{align}
			\label{eq:pi-x}
			\Pi_{x}(dh_0,dh_1,\dots)
			=\bb K(x;dh_0)\prod_{n \ge 1} \bb K(\Psi(h_{n-1});dh_n).
		\end{align}

		We now define the physical path \(X\) by concatenation. For an element $j=(h_0,h_1,\dots) \in J$, set
		\(
		\tau_0(j):=0, \text{ and } x_0(x,j):=x.
		\)
		Recursively, set
		\(
		\sigma_{n+1}(j):=\sigma(h_n), \  \tau_{n+1}(j):=\tau_n(j)+\sigma_{n+1}(j) \text{ and } x_{n+1}(x,j)
		:=
		\Psi(h_n).\)		
		We define the good set \(B_{\mathrm{good}}\subset E\times J\) as the set of all
		\((x,j)\) such that \(\eta_n(0)=x_n(x,j)\) for every \(n\ge0\) with
			\(\tau_n(j)<\infty\),  \(\sigma(h_n)>0\) for every \(n\ge0\) with \(\tau_n(j)<\infty\) and \(\tau_n(j)\to\infty\).
		On \(B_{\mathrm{good}}\), the path is given by the map $\Chi: E \times J \to D_E$ defined by
		\[
			\Chi^x_t(j)=\Chi(x,j)(t)
			:=
			\begin{cases}
				x_n(x,j), & t=\tau_n(j)\text{ for some }n,\\[1mm]
				\eta_n(t-\tau_n(j)), &
				\tau_n(j)<t<\tau_{n+1}(j).
			\end{cases}
		\]
		On \(E\times J\setminus B_{\mathrm{good}}\), define
		\(\Chi^x(j)\) to be an arbitrary fixed path in \(D_E\), e.g. $\Chi^x(j)\equiv x$. The map \(\Chi: E \times J \to D_E\) is Borel, see Lemma~\ref{lem:Chi-borel} in Appendix~\ref{appsec:interlacing}.

		Moreover, all the exceptional bad events in the construction, namely the presence of a multiple first accepted atom of $m_n$ and $B_{\mathrm{good}}^\complement$ are \(\Pi_x\)-null, see Lemma~\ref{lem:bad-sets} in Appendix~\ref{appsec:interlacing}.
		In particular, since each
		intra-stratum diffusion is conservative, the process is defined for all
		\(t\ge0\). Moreover, on every finite interval there are only finitely many
		jumps, and between jumps the path is continuous. Thus \(\Chi^x\) has c\`adl\`ag paths
		and is non-explosive.
		
		We equip $(D_E, \cl B(D_E))$ with the push-forward probability measure $\bb P_x:=(\Chi^x)_\# \Pi_x$, that is
		\begin{equation}
			\label{eq:Px-def-by-push-forw}
		\bb P_x(B):=\Pi_x\left(\{j \,|\, \Chi^x(j)\in B\}\right)=\int_J \one_B(\Chi^x(j))\Pi_x(dj), \quad B \in \cl B(D_E).
		\end{equation}
		If we define the canonical path on $D_E$ by \(X_t(\omega)=\omega(t),\)	then we have
		\(X_t \circ \Chi^x= \Chi_t^x,\)
		which gives, for all $B \in \cl B(E)$,
		\begin{equation}
			\label{eq:same-law}
			\bb P_x(X_t \in B)=\Pi_x(X_t \circ \Chi^x \in B)=\Pi_x(\Chi_t^x \in B).
		\end{equation}
		In other words, $X_t$ under $\bb P_x$ has the same law as $\Chi_t^x$ under $\Pi_x$.
		
		By construction, between \(\tau_n\) and \(\tau_{n+1}\) the process follows
		the diffusion path \(\eta_n\), whose law is that of the conservative
		diffusion on the stratum containing \(x_n=\Chi_{\tau_n}^x\). Hence, between jump
		times, \(\Chi^x\) evolves inside a single stratum according to the corresponding
		generator \(L_{\alpha}\). By \eqref{eq:same-law}, this and \(\tau_n \to +\infty\) give (ii)
		
		For the proof of (iii) and (iv), we write here the main points and leave details for Proposition~\ref{prop:iii-iv} in Appendix~\ref{appsec:interlacing}.
		
		Fix an edge \(e\in\cl E\) and a
		Borel set \(B\in\cl B(E)\). By the compensator formula for the thinned
		Poisson random measure, the counting process of jumps of type \(e\) landing
		in \(B\) has compensator
		\(
		\int_0^t \lambda_e(X_{s-})K_e(X_{s-},B)\,ds .
		\)
		In particular, jumps of type \(e\) occur with stochastic intensity
		\(\lambda_e(X_{t-})\), and conditional on such a jump from a pre-jump state
		\(z\), the post-jump position has law \(K_e(z,\cdot)\). Summing over the
		outgoing edges gives total jump intensity
		\(
		\Lambda(X_{t-}).
		\)
		This gives (iii)
		
		Let \(f\in\cl D_0\). On
		every inter-jump interval, \(X\) evolves inside a single stratum \(S_\alpha\).
		Hence, up to the next jump,
		\[
		f(X_t)-f(X_0)
		-
		\int_0^t
		L_\alpha(f|_{S_\alpha})(X_s)\,ds
		\]
		is a martingale. The compensated jump contribution is
		\[
		\sum_{0<s\leq t}
		\bigl(f(X_s)-f(X_{s-})\bigr)
		-
		\int_0^t
		Jf(X_s)\,ds .
		\]
		Adding the intra-stratum martingale part and the compensated jump part, and
		then summing over the finitely many inter-jump intervals before a fixed time
		\(T\), gives
		\[
		M_t^f
		:=
		f(X_t)-f(X_0)-\int_0^t Af(X_s)\,ds,
		\qquad 0\leq t\leq T,
		\]
		as a martingale. Since \(T>0\) is arbitrary, this holds for all \(t\geq 0\).
		Thus \(\P_x\) solves the martingale problem for \((A,\cl D_0)\).

		It remains to show that $x \mapsto \P_x$ is Borel. By \eqref{eq:Px-def-by-push-forw} and Lemma~\ref{lem:Chi-borel}, it suffices to prove that $x \mapsto \Pi_x$ is measurable. 
		To that purpose, we observe that the one-step input kernel
		\(
		\mathbb K(x;dh)
		=
		\mathbb Q_x(d\eta)\mathbb M(dm)
		\)
		is measurable in \(x\). This follows from Assumption~\ref{it:D1} and from the fact that
		the strata form a countable Borel partition of \(E\). Since
		\(
		\Psi:H\to E
		\)
		is Borel, the recursively defined kernels
		\(
		\mathbb K(x;dh_0)
		\)
		and, for \(n\ge1\),
		\(
		\mathbb K(\Psi(h_{n-1});dh_n)
		\)
		are measurable. This proves the claim.		
	\end{proof}

	\subsection{Strong Markov property}
	\label{subsec:strong-markov}
	
	We now show that the process constructed in Theorem~\ref{thm:existence} is strong Markov.
	
	\begin{theorem}
		\label{thm:strong-markov}
		The process $X$ constructed in Theorem~\ref{thm:existence} is strong Markov with respect to $\cl F^x=(\cl F_t^x)_{t\ge 0}$, that is, for every finite \(\cl F^x\)-stopping time \(T\) and bounded Borel functional $F$
		\[
		\mathbb E_x
		\left[
		F\big((X_{T+r})_{r\ge0}\big)
		\,\middle|\,
		\cl F_T^x
		\right]
		=
		\int_{D_E}F(\omega)\,\P_{X_T}(d\omega).
		\]
	\end{theorem}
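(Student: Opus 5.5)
The plan is to use the standard restart argument for concatenated Markov processes, as announced in Section~\ref{subsec:outline}. Everything is done on the input space \((J,\cl J,\Pi_x)\) and then pushed forward via \(\Chi^x\) using \eqref{eq:Px-def-by-push-forw}. The target identity is equivalent to the following: for every \(A\in\cl F_T^x\) and every bounded Borel \(F\),
\[
\E_x[F(X_{T+\cdot})\one_A]=\E_x\Bigl[\one_A\int F\,d\P_{X_T}\Bigr].
\]
By a monotone class argument it suffices to take cylinder functionals \(F\). By the usual approximation of \(T\) from above by discrete stopping times, together with right-continuity of paths, it also suffices to handle the raw filtration. For the final limit we need \(y\mapsto \int F\,d\P_y\) to be continuous along \(X_{T_k}\to X_T\) for continuous cylinder \(F\). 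Since we only have Borel measurability of \(x\mapsto\P_x\), I will avoid this limit. Instead, I will decompose \(T\) according to the jump intervals \(\{\tau_n\le T<\tau_{n+1}\}\) and prove the identity directly on each piece.

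\textbf{Step 1 (restart at jump times).} I first show that for every \(n\), conditionally on \(\cl G_n:=\sigma(h_0,\dots,h_{n-1})\), the shifted sequence \((h_n,h_{n+1},\dots)\) has law \(\Pi_{x_n}\) with \(x_n=\Psi(h_{n-1})\). This follows immediately from the product form \eqref{eq:pi-x} of the Ionescu--Tulcea measure. Since \(\Chi^x(j)(\tau_n+\cdot)=\Chi^{x_n}(h_n,h_{n+1},\dots)\) on the good set, and \(B_{\mathrm{good}}^\complement\) is null by Lemma~\ref{lem:bad-sets}, the post-\(\tau_n\) path given \(\cl G_n\) has law \(\P_{X_{\tau_n}}\). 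I also need \(\cl F_{\tau_n}\) (for the physical path) to be contained in \(\cl G_n\) up to null sets, which holds because the path up to \(\tau_n\) is a measurable function of \(h_0,\dots,h_{n-1}\).

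\textbf{Step 2 (stopping times inside an inter-jump interval).} On \(\{\tau_n\le T<\tau_{n+1}\}\), write \(T=\tau_n+R\), where \(R\) is a stopping time for the filtration generated by \(\cl G_n\), the diffusion path \(\eta_n\), and the restriction of \(m_n\) to \([0,r]\). Conditionally on \(\cl G_n\), the pair \((\eta_n,m_n)\) has law \(\bb Q_{x_n}\otimes\bb M\).

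For the diffusion component, the strong Markov property of \(\bb Q^\alpha\) is needed; it follows from the Feller property \ref{it:D3} and path continuity \ref{it:D2}. It gives that \(\eta_n(R+\cdot)\) has law \(\bb Q_{\eta_n(R)}\) given the information up to \(R\). For the Poisson component, the strong Markov property (independent increments) of Poisson random measures gives that \(\theta_R m_n\) is again distributed as \(\bb M\), independent of the past. This requires joint stopping-time arguments, since \(R\) depends on both components; the independence of the two components makes them work.

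On \(\{R<\sigma(h_n)\}\), the restarted pair \((\eta_n(R+\cdot),\theta_Rm_n)\) followed by \((h_{n+1},\dots)\) reproduces exactly the input structure of \(\Pi_{X_T}\). The first accepted time of the restarted pair is \(\sigma(h_n)-R\), and the update map is unchanged. Hence the post-\(T\) path has conditional law \(\P_{X_T}\).

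\textbf{Step 3 (assembly and the main obstacle).} Summing over \(n\) and using \(\tau_n\to\infty\) yields the claim. The main obstacle is Step 2: making precise that an \(\cl F^x_T\)-measurable event restricted to \(\{\tau_n\le T<\tau_{n+1}\}\) coincides with an event in \(\cl G_n\vee\sigma(\eta_n|_{[0,R]},m_n|_{[0,R]})\), and verifying the joint strong Markov property of \((\eta_n,m_n)\) at \(R\). I would first prove this for discrete-valued \(R\), which reduces to the simple Markov property of each component plus independence. I would then pass to general \(R\) using the Feller continuity of \(P^\alpha_t\) on \(C_0(S_\alpha)\) within the fixed stratum. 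Within a single stratum, continuity in the starting point is available, so the earlier difficulty with the non-continuous map \(y\mapsto\P_y\) does not arise. The completion to \(\cl F^x\) is handled by null-set arguments.
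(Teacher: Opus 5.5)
Your proposal is correct and follows essentially the same route as the paper. Both arguments restart at the structural jump times using the Ionescu--Tulcea product form \eqref{eq:pi-x}, restart at a stopping time before the next jump using the joint strong Markov property of the diffusion segment and the Poisson measure (with $\Psi(h_n)=\Psi(\theta_R h_n)$ on $\{R<\sigma(h_n)\}$), and then assemble over the partition $\{\tau_n\le T<\tau_{n+1}\}$. The differences are minor: you sketch the joint strong Markov property by dyadic approximation where the paper cites it, and you use a local filtration on the $n$-th segment rather than a pre-first-jump lemma applied after shifting at $\tau_n$. As the paper notes, you should also state that the structural jump times coincide a.s.\ with the discontinuity times of the path (by \ref{it:D2} and $K_e(x,\{x\})=0$), so that $\{\tau_n\le T<\tau_{n+1}\}\in\cl F^x_T$.
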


	\begin{proof}
		The argument is the standard restart argument for interlaced Markov processes.
		We only give the main steps, and refer to
		Appendix~\ref{appsec:strong-markov} for details.
		
		First, the strong Markov property holds at the canonical jump times
		\((\tau_n)_{n\geq 0}\). Indeed, by the product representation
		\eqref{eq:pi-x}, conditionally on the input variables used before
		\(\tau_n\), the remaining input sequence is independent of the past and has
		the same law as a fresh sequence of inputs started from the post-jump state
		\(X_{\tau_n}\). Equivalently, for every bounded Borel functional
		\(F:D_E\to\mathbb R\),
		\[
		\mathbb E_x\!\left[
		F\bigl((X_{\tau_n+t})_{t\geq 0}\bigr)
		\,\middle|\,\cl F_{\tau_n}
		\right]
		=
		\int_{D_E}
		F(\omega)\,\P_{X_{\tau_n}}(d\omega),
		\qquad
		\P_x\text{-a.s. on }\{\tau_n<\infty\},
		\]
		
		Second, the same restart property holds at stopping times which occur before
		the next jump. More precisely, if \(S\) is a bounded stopping time such that
		\(S<\tau_1\), then the shifted first input consists of the shifted
		intra-stratum diffusion and the shifted Poisson random measure. By the strong
		Markov property of the intra-stratum diffusion and by the independent
		increments of the Poisson random measure, this shifted input has the law of a
		fresh input started from \(X_S\). Hence
		\[
		\mathbb E_x\!\left[
		F\bigl((X_{S+t})_{t\geq 0}\bigr)
		\,\middle|\,\cl F_S
		\right]
		=
		\int_{D_E}
		F(\omega)\,\P_{X_S}(d\omega),
		\qquad
		\P_x\text{-a.s. on }\{S<\tau_1\}.
		\]
		
		Let now \(T\) be a bounded stopping time. Since the jump times satisfy
		\(\tau_n\to\infty\) almost surely, the events
		\(
		A_n:=\{\tau_n\leq T<\tau_{n+1}\},
		\ n\geq 0,
		\)
		form a countable \(\cl F_T\)-measurable partition. On \(A_n\), applying
		the preceding pre-first-jump restart argument to the process shifted at \(\tau_n\)
		and to the stopping time \(T-\tau_n\), we obtain
		\[
		\mathbb E_x\!\left[
		F\bigl((X_{T+t})_{t\geq 0}\bigr)
		\,\middle|\,\cl F_T
		\right]
		=
		\int_{D_E}
		F(\omega)\,\P_{X_T}(d\omega)
		\qquad
		\P_x\text{-a.s. on }A_n .
		\]
		Summing over \(n\geq 0\) gives the same identity for every bounded stopping
		time \(T\). Finally, if \(T\) is finite but not necessarily bounded, the
		conclusion follows by applying the bounded case to \(T\wedge n\) and letting
		\(n\to\infty\). Therefore \((\P_x)_{x\in E}\) is a strong Markov
		family.
	\end{proof}

	\subsection{Uniqueness}
	\label{subsec:uniqueness}
	In this subsection we prove uniqueness of the martingale problem for
	\((A,\cl D_0)\). We use the disjoint-union
	space
	\[
	\widetilde E:=\bigsqcup_{\alpha\in I} S_\alpha,
	\]
	that is we endow the space $E$ with the disjoint-union topology instead of the topology induced by the distance $d$.
	We identify a function \(f:E\to\mathbb R\) with the
	corresponding function \(\tilde f:\widetilde E\to\mathbb R\) given by
	\(
	\tilde f(x)=f(x).
	\)
	Because $\widetilde{E}$ has the disjoint-union topology a function $f:\widetilde{E}\to \R$ is continuous if and only if its restriction $f_\alpha:= f|_{S_\alpha}$ is continuous for every $\alpha \in I$. Since $I$ is countable this gives
	\[
	C_0(\widetilde E)
	=
	\left\{
	f=(f_\alpha)_{\alpha\in I}:
	f_\alpha\in C_0(S_\alpha),
	\quad
	\forall\varepsilon>0,\ 
	\#\{\alpha:\|f_\alpha\|_\infty\geq\varepsilon\}<\infty
	\right\},
	\]
	with norm
	\(
	\|f\|_\infty=\sup_{\alpha\in I}\|f_\alpha\|_\infty.
	\)
	Moreover,
	\(\cl D_0 \subset C_0(\widetilde{E}).\)
	
	The auxiliary topology changes the continuous functions on the state space, but not its
	measurable structure. Indeed, the original topology of \(E\) fixes the Borel
	\(\sigma\)-field \(\cl B(E)\), and Definition~\ref{def:stratified-space}
	makes the stratification compatible with this measurable structure: each
	\(S_\alpha\) is a Borel subset of \(E\), and its manifold topology agrees with the
	topology induced by \(E\). Since \(I\) is countable, this implies
	\begin{equation}
		\label{eq:borel-equality}
		\cl B(\widetilde E)
		=
		\{A\subset E:\ A\cap S_\alpha\in\cl B(S_\alpha)
		\text{ for every }\alpha\in I\}
		=
		\cl B(E).
	\end{equation}
	Therefore the sets of finite signed Borel measures $\mathscr M(E)$ and  $\mathscr M (\widetilde{E})$, and the sets of bounded Borel functions $B_b(E)$ and $B_b(\widetilde E)$ can be canonically identified.

	The proof of uniqueness is based on three analytic lemmas, stated below. The first, namely the existence of a direct-sum Feller semigroup
	on the auxiliary disjoint-union space, is standard and does not rely on the geometry of the state space. Because of this, it is proved in
	Appendix~\ref{appsec:uniqueness}. We keep its statement in the main text because
	it contributes to the description of the structure of the uniqueness argument.
	
	We stress again that the uniqueness proof uses an auxiliary disjoint-union topology only as a
	functional-analytic device. The process remains \(E\)-valued, and the martingale
	problem is formulated on the original Borel state space; the auxiliary topology is used	to obtain a direct-sum Feller semigroup to which a bounded perturbation argument can
	be applied.
	
	\begin{lemma}\label{lem:direct-sum-feller}
		For
		\(f=(f_\alpha)_{\alpha\in I}\in C_0(\widetilde E)\), define
		\(
		(\widetilde P_t f)|_{S_\alpha}
		:=
		P_t^\alpha(f_\alpha).
		\)
		Then \((\widetilde P_t)_{t\geq0}: C_0(\widetilde E) \to C_0(\widetilde E)\) is a strongly continuous contraction semigroup.
		Its generator \(\widetilde L\) is given by
		\(
		\widetilde L f|_{S_\alpha}
		=
		L_\alpha(f_\alpha),
		\)
		with domain
		\[
		D(\widetilde L)
		=
		\left\{
		f\in C_0(\widetilde E):
		f_\alpha\in D( L_\alpha)\ \forall\alpha,
		\quad
		\bigl(L_\alpha f_\alpha\bigr)_\alpha
		\in C_0(\widetilde E)
		\right\}.
		\]
		Moreover, \(\cl D_0\) is a core for \(\widetilde L\).
	\end{lemma}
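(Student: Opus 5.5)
We first check that \(\widetilde P_t\) maps \(C_0(\widetilde E)\) into itself. Let \(f=(f_\alpha)_\alpha\in C_0(\widetilde E)\). By \ref{it:D3}, each \(P_t^\alpha f_\alpha\) lies in \(C_0(S_\alpha)\). Contractivity gives \(\|P_t^\alpha f_\alpha\|_\infty\le\|f_\alpha\|_\infty\), so
\[
\#\{\alpha:\|P_t^\alpha f_\alpha\|_\infty\ge\varepsilon\}\le\#\{\alpha:\|f_\alpha\|_\infty\ge\varepsilon\}<\infty .
\]
Hence \(\widetilde P_tf\in C_0(\widetilde E)\) and \(\|\widetilde P_tf\|_\infty\le\|f\|_\infty\). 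Linearity and the semigroup law hold componentwise.

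For strong continuity, fix \(\varepsilon>0\) and let \(F\) be the finite set of indices with \(\|f_\alpha\|_\infty\ge\varepsilon/2\). For \(\alpha\notin F\) we have \(\|P_t^\alpha f_\alpha-f_\alpha\|_\infty\le 2\|f_\alpha\|_\infty<\varepsilon\) for every \(t\). For the finitely many \(\alpha\in F\), \ref{it:D3} gives a common \(t_0\) such that \(\|P_t^\alpha f_\alpha-f_\alpha\|_\infty<\varepsilon\) for \(t<t_0\). Taking the supremum over \(\alpha\) yields \(\|\widetilde P_tf-f\|_\infty\le\varepsilon\) for \(t<t_0\).

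Next we identify the generator. Denote by \(D\) the set displayed in the statement. If \(f\in D(\widetilde L)\), then \(t^{-1}(\widetilde P_tf-f)\) converges in the sup norm. Restricting to \(S_\alpha\) shows \(f_\alpha\in D(L_\alpha)\) with \((\widetilde Lf)_\alpha=L_\alpha f_\alpha\), and the limit \((L_\alpha f_\alpha)_\alpha\) lies in \(C_0(\widetilde E)\). So \(\widetilde L\subset L^D\), where \(L^D\) acts componentwise on \(D\). We will not prove the reverse inclusion by a direct uniform difference-quotient estimate, because uniformity in \(\alpha\) is the delicate point. Instead we use a resolvent argument. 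Since \(\widetilde L\) generates a contraction semigroup, \(\lambda-\widetilde L\) is onto \(C_0(\widetilde E)\) for \(\lambda>0\). The operator \(L^D\) is injective on \(\lambda-L^D\), because each \(\lambda-L_\alpha\) is injective. An injective extension of a surjective operator equals it: if \(f\in D\), choose \(g\in D(\widetilde L)\) with \((\lambda-\widetilde L)g=(\lambda-L^D)f\); injectivity gives \(f=g\). This proves \(D(\widetilde L)=D\).

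The core property is the step needing most care. By injectivity of \(\lambda-\widetilde L\), it suffices to show that \((\lambda-\widetilde L)\cl D_0\) is dense in \(C_0(\widetilde E)\) for some \(\lambda>0\), together with \(\cl D_0\subset D(\widetilde L)\). The latter is clear: elements of \(\cl D_0\) have finitely many nonzero components, each in \(\cl C_\alpha\subset D(L_\alpha)\). For density, let \(h\in C_0(\widetilde E)\) and \(\varepsilon>0\). Truncate \(h\) to the finite index set \(F\) where \(\|h_\alpha\|_\infty\ge\varepsilon\); the truncation error is below \(\varepsilon\). For each \(\alpha\in F\), the core property \ref{it:D4} implies that \((\lambda-L_\alpha)\cl C_\alpha\) is dense in \(C_0(S_\alpha)\), since \(\lambda-L_\alpha\) maps \(D(L_\alpha)\) onto \(C_0(S_\alpha)\) and is continuous for the graph norm. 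So pick \(g_\alpha\in\cl C_\alpha\) with \(\|(\lambda-L_\alpha)g_\alpha-h_\alpha\|_\infty<\varepsilon\). Setting \(g:=(g_\alpha\one_{\alpha\in F})_\alpha\in\cl D_0\) gives \(\|(\lambda-\widetilde L)g-h\|_\infty<\varepsilon\). Finally, the core conclusion follows from the standard fact that a subspace \(\cl D\subset D(\widetilde L)\) with \((\lambda-\widetilde L)\cl D\) dense is a core. For \(f\in D(\widetilde L)\), take \(g_n\in\cl D_0\) with \((\lambda-\widetilde L)g_n\to(\lambda-\widetilde L)f\). The bound \(\|(\lambda-\widetilde L)^{-1}\|\le1/\lambda\) then gives graph-norm convergence \(g_n\to f\).
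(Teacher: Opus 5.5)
Your proof is correct. The first half coincides with the paper's: invariance of $C_0(\widetilde E)$, contractivity, and strong continuity via a finite exceptional index set. You diverge from the paper in the reverse domain inclusion and in the core property.

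The paper proves both by direct componentwise estimates. For $f$ in the displayed set with $g=(L_\alpha f_\alpha)_\alpha$, it chooses a finite $F$ off which both $\|f_\alpha\|_\infty$ and $\|g_\alpha\|_\infty$ are below $\varepsilon$. It then uses $t^{-1}(P_t^\alpha f_\alpha-f_\alpha)=t^{-1}\int_0^t P_s^\alpha g_\alpha\,ds$ to bound the difference quotient error by $2\|g_\alpha\|_\infty$ uniformly off $F$. So the uniformity in $\alpha$ that you call delicate is handled in one line. For the core, it approximates $f_\alpha$ in graph norm by elements of $\cl C_\alpha$ for $\alpha\in F$, sets the remaining components to zero, and lets $\varepsilon\to0$.

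You replace both estimates with resolvent arguments. The first is maximality: $\widetilde L\subset L^D$, $\lambda-\widetilde L$ is surjective, and $\lambda-L^D$ is injective on $D$, so $\widetilde L=L^D$. The second is the range criterion for cores: $(\lambda-\widetilde L)\cl D_0$ is dense, and $\|(\lambda-\widetilde L)^{-1}\|\le1/\lambda$ converts convergence in the range into graph-norm convergence.

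Your route never needs tail control of $\widetilde L f$ in the core step, only of the target $h$. This works because $\lambda-\widetilde L$ acts diagonally, so truncating on the range side is trivial. It also uses nothing about the semigroup beyond Hille--Yosida. The paper's version is more elementary and self-contained, relying only on the $C_0(\widetilde E)$ tail condition applied to $f$ and $\widetilde L f$.

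One small wording fix: ``$L^D$ is injective on $\lambda-L^D$'' should read ``$\lambda-L^D$ is injective on $D$''.
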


	\begin{lemma}
		\label{lem:resolvent-uniqueness}
		If \(\lambda>2\overline\Lambda\) and \(\rho\) is a finite signed Borel measure on
		\(E\) satisfying
		\begin{equation}
			\label{eq:resolvent-equal-zero}
		\int_E(\lambda-A)f\,d\rho=0,
		\qquad f\in\cl D_0,
		\end{equation}
		then \(\rho=0\).
	\end{lemma}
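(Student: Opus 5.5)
We work in the Banach space $C_0(\widetilde E)$ of Lemma~\ref{lem:direct-sum-feller} and treat $J$ as a bounded perturbation of $\widetilde L$; the goal is to show that $(\lambda-A)(\cl D_0)$ is dense in $C_0(\widetilde E)$ in a sense strong enough to force $\rho=0$.

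\textbf{Step 1: extend the identity to $D(\widetilde L)$.} The operator $J$ is defined on all of $B_b(E)$ by $Jf(x)=\int (f(y)-f(x))\,\nu(x,dy)$. It satisfies
\[
\|Jf\|_\infty\le 2\overline\Lambda\|f\|_\infty ,
\]
but in general it need not map $C_0(\widetilde E)$ into itself. So I will not try to build a perturbed semigroup on $C_0(\widetilde E)$. Instead I work directly with the linear functional $f\mapsto\int(\lambda-A)f\,d\rho$, which is continuous in the graph norm of $\widetilde L$ because
\[
\Bigl|\int(\lambda-\widetilde L-J)f\,d\rho\Bigr|\le |\rho|(E)\bigl((\lambda+2\overline\Lambda)\|f\|_\infty+\|\widetilde Lf\|_\infty\bigr).
\]
By Lemma~\ref{lem:direct-sum-feller}, $\cl D_0$ is a core for $\widetilde L$. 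Hence \eqref{eq:resolvent-equal-zero} extends to
\[
\int(\lambda-\widetilde L-J)f\,d\rho=0\quad\text{for all } f\in D(\widetilde L).
\]

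\textbf{Step 2: substitute resolvents and iterate.} For $g\in C_0(\widetilde E)$, put $f=R_\lambda g:=(\lambda-\widetilde L)^{-1}g\in D(\widetilde L)$. This gives
\[
\int g\,d\rho=\int J R_\lambda g\,d\rho ,
\]
and since $\|R_\lambda\|\le 1/\lambda$,
\[
\|JR_\lambda g\|_\infty\le \frac{2\overline\Lambda}{\lambda}\|g\|_\infty .
\]
Write $q:=2\overline\Lambda/\lambda<1$. The function $JR_\lambda g$ is only bounded Borel, not in $C_0(\widetilde E)$, so the identity cannot be iterated directly. I therefore extend $R_\lambda$ to $B_b(E)$ by the kernel formula
\[
R_\lambda h(x)=\int_0^\infty e^{-\lambda t}P_t^{\alpha}h(x)\,dt,\qquad x\in S_\alpha .
\]
I then need the identity $\int h\,d\rho=\int JR_\lambda h\,d\rho$ for all $h\in B_b(E)$. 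To get it, I pass from $C_0(\widetilde E)$ to $B_b$ by bounded pointwise limits, using dominated convergence for both $\rho$ and the kernels, together with a monotone class argument. The class of $h$ satisfying the identity is closed under bounded pointwise convergence, because $R_\lambda$ and $J$ are given by kernels. It contains $C_0(\widetilde E)$, which generates $\cl B(\widetilde E)=\cl B(E)$ by \eqref{eq:borel-equality}. So it contains all of $B_b(E)$.

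Once the identity holds on $B_b(E)$, iteration gives
\[
\int h\,d\rho=\int (JR_\lambda)^n h\,d\rho,
\]
hence
\[
\Bigl|\int h\,d\rho\Bigr|\le q^n\,|\rho|(E)\,\|h\|_\infty\to0 .
\]
Therefore $\rho=0$.

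\textbf{Main obstacle.} The delicate point is the monotone class step: justifying bounded pointwise closure. This requires that $P^\alpha_t$ act on $B_b(S_\alpha)$ through the transition kernel of $\mathbb Q^\alpha$ given by \ref{it:D3}, and that these kernels agree with the $C_0$ semigroup. It also requires measurability of $(t,x)\mapsto P_t^\alpha h(x)$, which follows from càdlàg paths and \ref{it:D1}.

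An alternative that avoids extending the identity to $B_b(E)$ is a cleaner bound. The identity gives $|\int g\,d\rho|\le q\,|\rho|(E)\|g\|_\infty$ for all $g\in C_0(\widetilde E)$. Because $C_0(\widetilde E)$ is norming for finite measures on the locally compact space $\widetilde E$, taking the supremum over $\|g\|_\infty\le1$ yields $|\rho|(E)\le q|\rho|(E)$, hence $\rho=0$. This single-step argument needs only the Riesz representation on $\widetilde E$. It is the route I would actually take, keeping the iteration as a backup.
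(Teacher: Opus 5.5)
Your proposal is correct. The single-step route you say you would actually take is exactly the paper's proof: extend the identity to $D(\widetilde L)$ using the core property from Lemma~\ref{lem:direct-sum-feller}, substitute $f=R_\lambda h$ to get $\int h\,d\rho=\int JR_\lambda h\,d\rho\le (2\overline\Lambda/\lambda)\|\rho\|_{\mathrm{TV}}\|h\|_\infty$, and conclude by Riesz duality on $\widetilde E$; the monotone-class iteration you keep as a backup is valid but unnecessary.
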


\begin{proof}
	Thanks to \eqref{eq:borel-equality}, we may regard \(\rho\) as a finite signed Borel measure
	on \(\widetilde E\) and every function in
	\(C_0(\widetilde E)\) may be regarded as a bounded Borel function on
	\(E\)
	
	The jump operator \(J\) is bounded from \(C_0(\widetilde E)\) into \(B_b(E)\). Indeed,
	for every \(f \in B_b(E)\),
	\[
	\begin{aligned}
		|Jf(x)|
		&\leq
		\int_E |f(y)-f(x)|\,\nu(x,dy) 
		\leq
		2\|f\|_\infty\nu(x,E)
		=
		2\|f\|_\infty\Lambda(x)
		\leq
		2\overline\Lambda\|f\|_\infty.
	\end{aligned}
	\]
	
	Let \(\lambda>2\overline\Lambda\), and suppose \eqref{eq:resolvent-equal-zero}, or
	equivalently,
	\begin{equation}
		\label{eq:signed-meas}
		\int_E(\lambda-\widetilde L)f\,d\rho
		=
		\int_E Bf\,d\rho,
		\qquad f\in\cl D_0.
	\end{equation}
	Equation \eqref{eq:signed-meas} can be extended to $D(\widetilde L)$, by selecting a sequence $(f_n)_{n \ge 0}$ that converges to $f$ so that
	\begin{gather*}
		\lambda\|f_n-f\|_\infty
		+
		\|\widetilde L f_n-\widetilde L f\|_\infty
		\to 0,\qquad
		\|Bf_n-Bf\|_\infty
		\le
		2\overline\Lambda\|f_n-f\|_\infty
		\to 0.
	\end{gather*}

	Let \(h\in C_0(\widetilde E)\). Denote by
	\(
	R_\lambda h
	:=
	\int_0^\infty e^{-\lambda t}\widetilde P_t h\,dt
	\)
	the resolvent of $\widetilde P_t$. Since \((\widetilde P_t)\) is a
	contraction semigroup, one has
	\(
	\|R_\lambda h\|_\infty
	\leq
	\frac1\lambda\|h\|_\infty.
	\)
	Moreover,
	\(
	R_\lambda h\in D(\widetilde L),
	\text{ and }
	(\lambda-\widetilde L)R_\lambda h=h.
	\)

	Therefore, using \eqref{eq:signed-meas} extended to $D(\widetilde L)$, we get, for \(\|h\|_\infty\leq1\),
	\[
	\begin{aligned}
		\left|\int_E h\,d\rho\right|
		&=
		\left|\int_E B R_\lambda h\,d\rho\right| 
		\leq
		\|\rho\|_{\mathrm{TV}}
		\,
		2\overline\Lambda
		\|R_\lambda h\|_\infty  \leq
		\|\rho\|_{\mathrm{TV}}
		\,
		\frac{2\overline\Lambda}{\lambda}
		\|h\|_\infty 
		\leq
		\frac{2\overline\Lambda}{\lambda}
		\|\rho\|_{\mathrm{TV}}.
	\end{aligned}
	\]
	Taking the supremum over all \(h\in C_0(\widetilde E)\) with
	\(\|h\|_\infty\leq1\), and using the Riesz-Markov-Kakutani theorem on the locally
	compact separable metric space \(\widetilde E\), gives
	\(
	\|\rho\|_{\mathrm{TV}}
	\leq
	(2\overline\Lambda
	\|\rho\|_{\mathrm{TV}})/\lambda.
	\)
	Since \(\lambda>2\overline\Lambda\), this forces
	\(
	\|\rho\|_{\mathrm{TV}}=0.
	\)
	Hence \(\rho=0\).
\end{proof}

	\begin{lemma}
		\label{lem:D0-measure-determining}
		The class \(\cl D_0\) is measure determining
		on \(E\). More precisely, if \(\mu\) and \(\nu\) are finite Borel measures on \(E\) and
		\begin{equation}
			\label{eq:measure-integral-identity}
			\int_E f\,d\mu=\int_E f\,d\nu,
			\qquad \text{for all }f\in\cl D_0,
		\end{equation}
		then \(\mu=\nu\).
	\end{lemma}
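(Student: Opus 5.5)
The plan is to reduce the statement to the stratumwise measure-determining property in \ref{it:D4}, using the fact that the strata form a countable Borel partition of \(E\).

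Fix \(\alpha\in I\) and \(g\in\cl C_\alpha\). Extend \(g\) by zero off \(S_\alpha\), that is, set \(f:=g\) on \(S_\alpha\) and \(f:=0\) on \(E\setminus S_\alpha\). This \(f\) lies in \(\cl D_0\) by Definition~\ref{def:pre-generator}: it is bounded because \(g\in C_0(S_\alpha)\), and it is Borel on \(E\) because \(S_\alpha\) is a Borel subset of \(E\) carrying the induced topology by \eqref{S2}, so \(g\circ\iota_\alpha^{-1}\) is Borel on \(\iota_\alpha(S_\alpha)\). Only one component is nonzero, and each component lies in the relevant core (\(0\in\cl C_\beta\) since \(\cl C_\beta\) is a linear subspace). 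Plugging this \(f\) into \eqref{eq:measure-integral-identity} gives
\[
\int_{S_\alpha} g\,d\mu_\alpha=\int_{S_\alpha} g\,d\nu_\alpha,\qquad g\in\cl C_\alpha,
\]
where \(\mu_\alpha(B):=\mu(B\cap S_\alpha)\) and \(\nu_\alpha(B):=\nu(B\cap S_\alpha)\) are the restrictions, regarded as finite Borel measures on \(S_\alpha\).

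One point needs a remark. The Borel \(\sigma\)-field of \(S_\alpha\) as a manifold coincides with the trace of \(\cl B(E)\) on \(S_\alpha\), again by \eqref{S2}. So the \(\mu_\alpha,\nu_\alpha\) are genuinely finite Borel measures on the manifold \(S_\alpha\), as required by \ref{it:D4}. The measure-determining part of \ref{it:D4} then yields \(\mu_\alpha=\nu_\alpha\) for every \(\alpha\).

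Finally, take \(B\in\cl B(E)\). Since \(I\) is countable and the \(S_\alpha\) are disjoint Borel sets covering \(E\), countable additivity gives
\[
\mu(B)=\sum_{\alpha}\mu_\alpha(B\cap S_\alpha)=\sum_\alpha \nu_\alpha(B\cap S_\alpha)=\nu(B).
\]
This is consistent with \eqref{eq:borel-equality}.

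There is no real obstacle in this argument. The only points needing care are the measurability of the zero extension and the identification of the Borel structures on \(S_\alpha\), and both follow directly from \eqref{S2}.
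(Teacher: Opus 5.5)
Your proposal is correct and follows the same route as the paper. You test the identity against zero extensions of core functions on a single stratum, use the measure-determining part of (D4) to get equality of the restrictions to each \(S_\alpha\), and conclude by countable additivity over the Borel partition. Your remarks on the Borel measurability of the zero extension and on identifying the Borel structures via (S2) only spell out what the paper uses implicitly.
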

	
	\begin{proof}
		Assume \eqref{eq:measure-integral-identity} and fix \(\alpha\in I\). Since \(S_\alpha\) is a Borel subset of \(E\) and the inclusion is a homeomorphism, the restrictions \(\mu|_{S_\alpha}\) and \(\nu|_{S_\alpha}\)
		are finite Borel measures on \(S_\alpha\). Taking \(g \in \cl D_0\)
		supported only on \(S_\alpha\), we get
		\[
		\int_{S_\alpha} g\,d\mu
		=
		\int_{S_\alpha} g\,d\nu,
		\qquad g\in \cl C_\alpha.
		\]
		Since \(\cl C_\alpha\)
		is measure determining, one gets
		\(
		\mu|_{S_\alpha}=\nu|_{S_\alpha}.
		\)
		Because the strata form a countable Borel partition we conclude that \(\mu=\nu\) on \(E\).
	\end{proof}

	\begin{theorem}
		\label{thm:uniqueness}
		Suppose Assumptions \assref{ass:diffusions} and
		\assref{ass:jumps} hold. Then, for every \(x\in E\), the
		martingale problem for \((A,\cl D_0)\) has at most one solution on
		\(D_E\) with initial condition \(x\). Consequently, together with the
		existence result of Theorem~\ref{thm:existence}, the martingale problem is well-posed.
	\end{theorem}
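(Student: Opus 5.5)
The plan is the classical resolvent route of Ethier--Kurtz: uniqueness of one-dimensional marginals, then of finite-dimensional distributions.

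\textbf{Step 1: resolvent identity.} Fix \(x\in E\) and let \(\P^1,\P^2\) be two solutions of the martingale problem for \((A,\cl D_0)\) started at \(x\). For \(f\in\cl D_0\) the function \(Af\) is bounded by \(\|L_\alpha f_\alpha\|_\infty\) summed over the finitely many active strata, plus \(2\overline\Lambda\|f\|_\infty\). Hence \(M^f\) is a true martingale and \(t\mapsto \E^i[f(X_t)]\) satisfies
\[
\E^i[f(X_t)] = f(x)+\int_0^t \E^i[Af(X_s)]\,ds .
\]
Multiply by \(\lambda e^{-\lambda t}\), integrate over \(t\in(0,\infty)\) and apply Fubini. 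With the finite measures \(\mu_\lambda^i(B):=\E^i\int_0^\infty e^{-\lambda t}\one_B(X_t)\,dt\), this gives
\[
\int_E(\lambda-A)f\,d\mu^i_\lambda=f(x),\qquad f\in\cl D_0 .
\]
The signed measure \(\rho:=\mu^1_\lambda-\mu^2_\lambda\) then satisfies \eqref{eq:resolvent-equal-zero}. For every \(\lambda>2\overline\Lambda\), Lemma~\ref{lem:resolvent-uniqueness} gives \(\rho=0\).

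\textbf{Step 2: one-dimensional marginals.} Step 1 says that the Laplace transforms of \(t\mapsto\E^1[f(X_t)]\) and \(t\mapsto\E^2[f(X_t)]\) agree for all \(\lambda>2\overline\Lambda\), for every \(f\in\cl D_0\). These functions are right-continuous, since \(f\in C_0(\widetilde E)\) and paths are càdlàg in \(E\). One has to check that càdlàg in \(E\) implies right-continuity of \(f(X_t)\) here: the process sits in a single stratum on \([t,t+\epsilon)\) and is continuous there in the stratum topology, but the general solution is not a priori of that form. If this fails, I would instead use that \(t\mapsto \E^i f(X_t)\) is Lipschitz by the integral identity above, hence continuous. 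Uniqueness of Laplace transforms then gives \(\E^1 f(X_t)=\E^2 f(X_t)\) for all \(t\). Lemma~\ref{lem:D0-measure-determining} yields \(\P^1\circ X_t^{-1}=\P^2\circ X_t^{-1}\).

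\textbf{Step 3: finite-dimensional distributions.} I proceed by induction on the number of time points, using the standard regular-conditional-probability argument (Ethier--Kurtz, Thm.~4.4.2). Let \(0\le t_1<\dots<t_k\) and let \(g_1,\dots,g_k\ge0\) be bounded Borel with \(\E^i[\prod_j g_j(X_{t_j})]>0\). Define the reweighted shifted laws
\[
\widehat\P^i(\Gamma):=\frac{\E^i\bigl[\prod_j g_j(X_{t_j})\,\one_\Gamma(X_{t_k+\cdot})\bigr]}{\E^i\bigl[\prod_j g_j(X_{t_j})\bigr]} .
\]
Each \(\widehat\P^i\) solves the martingale problem for \((A,\cl D_0)\) with the same initial law; the normalisations agree by the induction hypothesis. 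The initial law is no longer a point mass, so Steps 1--2 must be run with a general initial distribution \(\mu\); they go through verbatim, with \(f(x)\) replaced by \(\int f\,d\mu\). Steps 1--2 then give equality of the one-dimensional marginals of \(\widehat\P^1\) and \(\widehat\P^2\), which is exactly the \((k+1)\)-point statement. Finite-dimensional distributions determine the law on \(D_E\), since \(\cl B(D_E)\) is generated by coordinate maps for the separable metric space \(E\). Hence \(\P^1=\P^2\).

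Combined with Theorem~\ref{thm:existence}, this gives well-posedness. The main obstacle is Step 2, specifically the measurability and continuity of \(t\mapsto\E f(X_t)\) for a general solution, where \(f\in\cl D_0\) is continuous only on \(\widetilde E\). The Lipschitz bound from the martingale identity settles it, since it requires only boundedness of \(Af\) and joint measurability of \((t,\omega)\mapsto X_t(\omega)\), which holds for càdlàg paths.
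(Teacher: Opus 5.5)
Your proof is correct and follows essentially the paper's route: a resolvent identity for the Laplace-transformed marginals, Lemma~\ref{lem:resolvent-uniqueness} for $\lambda>2\overline\Lambda$, Laplace inversion using the Lipschitz continuity of $t\mapsto\E f(X_t)$, and Lemma~\ref{lem:D0-measure-determining}, followed by the Ethier--Kurtz passage to finite-dimensional distributions, which you write out as an explicit induction instead of citing Thm.~4.4.2. One merit of your version is that you run Steps 1--2 for an arbitrary initial law $\mu$, which is what the hypothesis of Ethier--Kurtz Thm.~4.4.2 actually requires, whereas the paper's proof treats only $\delta_x$ and leaves that extension implicit.
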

	
	\begin{proof}
		Let \(\bb Q_x\) be any solution of the
		martingale problem for \((A,\cl D_0)\) with initial condition \(x\). Let
		\(Z_t\) denote the canonical process under \(\bb Q_x\). For $B \in \cl B(E)$, set
		\(
		Q_t(x,B):=\bb Q_x(Z_t\in B)
		\)
		and 
		\(
		P_t(x,B):=\P_x(X_t\in B),
		\) where $\P_x$ denotes the law on $D_E$ constructed in Theorem \ref{thm:existence}.
		We will show that the one-dimensional marginal distributions coincide, that is
		\begin{equation}
			\label{eq:equality-one-dim-marginals}
		Q_t(x,\cdot)=P_t(x,\cdot),
		\qquad t\geq0.
		\end{equation}
		To get uniqueness of the
		whole law, it will then suffice to apply \cite[Thm. IV.4.2]{ethier2009markov}.
		
		Fix \(f\in\cl D_0\). Since \(\bb Q_x\) solves the martingale problem,
		\[
		M_t^f
		:=
		f(Z_t)-f(x)-\int_0^t Af(Z_s)\,ds
		\]
		is a \(\bb Q_x\)-martingale. Taking expectations at deterministic times gives
		\[
		\int_E f\,Q_t(x,dy)
		=
		f(x)+\int_0^t\int_E Af\,Q_s(x,dy)\,ds.
		\]
		Equivalently, the function
		\(
		m_f^x(t):=\int_E f\,Q_t(x,dy)
		\)
		is absolutely continuous and satisfies
		\begin{equation}
			\label{eq:m-derivative}
			\frac{d}{dt}m_f^x(t)
			=
			\int_E Af\,Q_t(x,dy)
		\end{equation}
		for Lebesgue-a.e. \(t\geq0\).
		
		For \(\lambda>0\) and \(B \in \cl B(E)\), define the Laplace resolvent measure of the family
		\((Q_t(x,\cdot))_{t\geq0}\) by
		\(
		U_\lambda^{\bb Q}(x,B)
		:=
		\int_0^\infty e^{-\lambda t}Q_t(x,B)\,dt.
		\)
		This is a finite measure with total mass \(1/\lambda\). We claim that
		\begin{equation}
			\label{eq:resolvent-identity}
			\int_E(\lambda-A)f\,U_\lambda^{\bb Q}(x,dy)
			=
			f(x),
			\qquad f\in\cl D_0.
		\end{equation}
		Indeed,
		\[
		\begin{aligned}
			\int_E(\lambda-A)f\,U_\lambda^{\bb Q}(x,dy)
			&=
			\lambda\int_0^\infty e^{-\lambda t}m_f^x(t)\,dt
			-
			\int_0^\infty e^{-\lambda t}\int_E Af\,Q_t(x,dy)\,dt .
		\end{aligned}
		\]
		Since \(m_x^f\) is absolutely continuous and bounded, integration by parts and \eqref{eq:m-derivative} give
		\[
		\int_0^\infty e^{-\lambda t}\int_E Af\,Q_t(x,dy)\,dt
		=
		\int_0^\infty e^{-\lambda t}\frac{d}{dt}m_f^x(t)\,dt
		=
		-m_f^x(0)+\lambda\int_0^\infty e^{-\lambda t}m_f^x(t)\,dt.
		\]
		Since \(m_f^x(0)=f(x)\), we obtain \eqref{eq:resolvent-identity}.
		
		The constructed solution \(\P_x\) also solves the martingale problem. Therefore its
		resolvent measure
		\(
		U_\lambda^{\P}(x,B)
		:=
		\int_0^\infty e^{-\lambda t}P_t(x,B)\,dt
		\)
		satisfies \eqref{eq:resolvent-identity} as well.
		Consequently, the finite signed measure
		\(
		\rho_\lambda(\cdot)
		:=
		U_\lambda^{\bb Q}(x,\cdot)-U_\lambda^{\P}(x,\cdot)
		\)
		satisfies \eqref{eq:resolvent-equal-zero}.
		Lemma~\ref{lem:resolvent-uniqueness} gives, for every \(\lambda>2 \overline{\Lambda}\),
		\(
		\rho_\lambda=0
		\)
		and hence
		\(
		U_\lambda^{\bb Q}(x,\cdot)=U_\lambda^{\P}(x,\cdot).
		\)
		
		Now fix \(g\in\cl D_0\). From equality of the resolvent measures we get
		\[
		\int_0^\infty e^{-\lambda t}
		\left[
		\int_E g\,Q_t(x,dy)
		-
		P_tg(x)
		\right]dt
		=
		0,
		\qquad \lambda>2 \overline{\Lambda}.
		\]
		The function
		\(
		h_g(t)
		:=
		\int_E g\,Q_t(x,dy)-P_tg(x)
		\)
		is bounded and continuous. Indeed, both terms are absolutely
		continuous in \(t\), because both \(\bb Q_x\) and \(\P_x\) solve the martingale problem at
		deterministic times. Since the Laplace transform of \(h_g\) vanishes for all
		\(\lambda>2 \overline{\Lambda}\), the uniqueness of the Laplace transform gives
		\(
		h_g(t)=0
		\) for all \(t\geq0.\)
		Therefore, for $g \in \cl D_0$ and $t \ge 0$, 
		\(
		\int_E g\,Q_t(x,dy)
		=
		P_tg(x),
		\)
		and \eqref{eq:equality-one-dim-marginals} follows by Lemma~\ref{lem:D0-measure-determining}.
	\end{proof}

	\subsection{Ergodic theorems}
	\label{subsec:harris}
	
	We now record a Lyapunov-type sufficient condition for the existence and
	uniqueness of an invariant probability measure, together with positive Harris
	recurrence, for the stratified jump-diffusion $X$ constructed in
	Theorem~\ref{thm:existence}. The theory presented here is essentially taken from Meyn and Tweedie's works on Markov processes and their stability \cite{meyn1993stability,meyn1993stability3,meyn2012markov}.
	
	Let $(O_m)_{m \ge 0}$ be an open relatively compact exhaustion of $E$, and define 
	\( T_m:=\inf\{t\ge0:X_t\notin O_m\} \text{ and } X_t^{(m)}:=X_{t\wedge T_m}. \)
	We recall some important definitions from \cite{meyn1993stability,meyn1993stability3}.

	\begin{definition}[$\psi$-irreducibility]
		The process $X$ is called $\psi$-irreducible, for a non-zero $\sigma$-finite measure $\psi$ on $E$, if
		\(\psi(B)>0\) implies \(\E_x(\int_0^\infty \mathbf{1}_{\{X_t \in B\}}\, dt)>0\) for all \( x \in E.\)
	\end{definition}
	
	\begin{definition}[Aperiodicity]
		Let $X$ be $\psi$-irreducuble.
		We will say that $X$ is aperiodic if a suitable $D$-skeleton is strongly aperiodic, that is there exist $D,\,\epsilon>0$, a compact set $K\subset E$ such that $\psi(K)>0$ and a probability measure $\rho$, with $\rho(K)>0$, such that, for all $x \in K$, 
		\(P_D(x,\cdot) \ge \varepsilon \rho(\cdot). \)
	\end{definition}
	
	\begin{definition}[Harris recurrence]
		The process \(X\) is called Harris recurrent if there exists a non-zero $\sigma$-finite measure $\gamma$ such that for every set $A \in \cl B(E)$ with $\gamma(A)>0$, denoting $\tau_A:=\inf\{t \ge 0, X_t \in A\}$ one has
		\(\P_x(\tau_A < +\infty)=1,\)
		for all $x \in E$.
	\end{definition}
	
	If the process is Harris recurrent then there exists an essentially unique invariant measure $\pi$. If this invariant measure is finite then $X$ is called positive Harris recurrent.

	\begin{definition}[Truncated extended generator]
		\label{def:truncated-extended-generator}
		A measurable function $V:E\to\mathbb R$ is said to belong to the domain
		$D(A^\ast_m)$ if there exists a measurable function
		$G_m:E\to\mathbb R$ such that
		\[
			\mathbb E_x[V(X_t^{(m)})]
			=V(x)+\mathbb E_x\!\left[\int_0^{t \wedge T_m} G_m(X_s)\,ds\right],
			\qquad t\ge 0,\ x\in E
		\]
		and $\E_x [\int_{0}^{t\wedge T_m}|G_m(X_s)|\, ds]<\infty$
		for all $t\ge 0$.
		In that case we write $A^\ast_m V:=G_m$ on $O_m$.
	\end{definition}

	\begin{definition}[Petite set]
		A Borel set $C \in \cl B(E)$ is called petite for $X$ if there
		exist a probability measure $a$ on $(0,\infty)$ and a non--zero measure
		$\nu$ on $(E,\cl B(E))$ such that
		\[
			\int_0^\infty P_t(x,B)\,a(dt)
			\;\ge\; \nu(B)
			\quad\text{for all }x\in C,\ B\in\cl B(E).
		\]
	\end{definition}

	We now impose the following classical stability condition used for the long-time analysis. It is a Foster--Lyapunov drift condition for the stopped extended generators: outside a petite set, the expected infinitesimal drift of \(V\) is negative and pulls the process back toward a controlled region of the state space. The petite set provides the local recurrence/minorization ingredient needed to apply the standard Harris--Meyn--Tweedie theory.

	\begin{assumption}{L}[Lyapunov drift]
		\label{ass:Lyapunov}
		There exist \(V:E\to[1,\infty)\),
		a closed petite set \(C\), and constants \(\lambda,b>0\) such that \(V\) is bounded
		on \(C\), and, for all $m \ge 1$, \(V \in D(A^\ast_m)\) and
		\[
		A_m^\ast V(x)\le -\lambda V(x)+b\mathbf 1_C(x),
		\qquad x\in O_m.
		\]		
	\end{assumption}

	We can now state a Harris-type ergodic theorem and a Corollary that we shall use as
	black boxes in the covariance application. 
	
	\begin{theorem}
		\label{thm:harris}
		Suppose Assumptions \assref{ass:diffusions}, \assref{ass:jumps} and \assref{ass:Lyapunov} hold.
		Then the stratified jump-diffusion \(X\) is positive Harris recurrent. It has a unique invariant probability measure $\pi$ such that
		
		\begin{itemize}
			\item[(i)] \(X\) is Harris recurrent with respect to \(\pi\);
			\item[(ii)] $X$ is $\pi$-irreducible;
			\item[(iii)] 
			\(
			\int_E V(x)\,\pi(dx)<\infty;
			\)
			\item[(iv)] for every initial distribution \(\mu\) on \(E\),
			\[
			\frac1T\int_0^T \mu P_t\,dt \Longrightarrow \pi
			\qquad\text{as }T\to\infty,
			\]
			and in particular
			\[
			\frac1T\int_0^T f(X_t)\,dt \xrightarrow[T\to\infty]{a.s.} \int_E f\,d\pi
			\]
			for every bounded measurable \(f:E\to\mathbb R\).
		\end{itemize}
	\end{theorem}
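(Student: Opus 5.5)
The plan is to reduce the statement to the continuous-time Foster--Lyapunov criteria of Meyn and Tweedie \cite{meyn1993stability3}, specifically their criterion (CD3) and the associated theorems giving positive Harris recurrence, finiteness of $\int V\,d\pi$ and convergence of Cesàro averages. Their framework needs three things: a non-explosive Borel right process, a drift condition for the stopped extended generators $A^\ast_m$, and a petite-set condition that replaces compactness. The first ingredient comes from earlier results. Theorem~\ref{thm:existence} gives a conservative càdlàg process with Borel $x\mapsto\P_x$, and Theorem~\ref{thm:strong-markov} gives the strong Markov property. Hence $(\P_x)$ is a Borel right process in the sense of \S\ref{subsec:markov-conventions}. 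The truncated generator in Definition~\ref{def:truncated-extended-generator} is exactly the object used in \cite{meyn1993stability3}, so Assumption~\assref{ass:Lyapunov} is their drift condition verbatim.

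Next I will run the drift argument to get return times to $C$. Apply the definition of $A^\ast_m$ at the stopping time $t\wedge T_m\wedge\tau_C$, where $\tau_C$ is the hitting time of $C$. Since $V\ge1$ and $A^\ast_m V\le -\lambda V\le -\lambda$ off $C$, this gives $\lambda\,\E_x[t\wedge T_m\wedge\tau_C]\le V(x)$. Letting $m\to\infty$ uses non-explosion, $T_m\to\infty$; letting $t\to\infty$ uses monotone convergence. Together they give $\E_x[\tau_C]\le V(x)/\lambda<\infty$ for every $x$, and in particular $\P_x(\tau_C<\infty)=1$. Because $C$ is closed and $V$ is bounded on $C$, the same bound applied from the exit points yields $\sup_{x\in C}\E_x[\tau_C^{+}]<\infty$ for the return time after a fixed delay.

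With this in hand I would pass to Harris recurrence. The petite measure $\nu$ from the definition of petite set is the natural irreducibility measure. From every $x$ the process reaches $C$ almost surely, and from $C$ the sampled kernel $\int P_t\,a(dt)$ dominates $\nu$. A standard argument then gives $\psi$-irreducibility and Harris recurrence. One route is the resolvent-chain argument: a petite set reached a.s.\ with bounded mean return time implies the embedded resolvent chain is positive Harris, by Meyn--Tweedie's theorem on sampled chains. This yields a unique invariant probability $\pi$; $\pi$-irreducibility and Harris recurrence with respect to $\pi$ follow since $\pi$ is a maximal irreducibility measure. This gives (i) and (ii).

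For (iii), I will integrate the drift inequality against $\pi$, after truncating $V$ by $V\wedge n$ and using a Fatou argument. This gives $\lambda\int V\,d\pi\le b\,\pi(C)<\infty$; equivalently it follows from the (CD3)$\Rightarrow$$V$-finiteness theorem. For (iv), the Cesàro convergence of $\mu P_t$ and the almost sure law of large numbers for bounded $f$ are the standard ratio and ergodic theorems for positive Harris processes.

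The main obstacle is the petite-set to Harris step. Petite is weaker than the compactness assumptions in some versions of the theorem, and the definition of $\psi$-irreducibility here requires positivity from every starting point. I would handle this by working with the resolvent chain $R(x,\cdot)=\int_0^\infty e^{-t}P_t(x,\cdot)\,dt$, for which $C$ is again petite and uniformly accessible, and then transferring recurrence back to continuous time.
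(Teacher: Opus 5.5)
Your proposal is correct and follows the paper's route. The paper's proof notes that Theorems~\ref{thm:existence} and~\ref{thm:strong-markov} make $X$ a conservative Borel right process, so Assumption~\assref{ass:Lyapunov} places it under \cite[Theorem~4.2]{meyn1993stability3} for (i)--(iii), with (iv) taken from standard positive-Harris theory \cite{glynn1992uniform}. Your Dynkin bound $\E_x[\tau_C]\le V(x)/\lambda$, delayed-return estimate, resolvent-chain step and truncation argument for $\int V\,d\pi<\infty$ unpack what that citation does internally; the one point to tighten is that, with the paper's occupation-time definition of irreducibility and a possibly singular sampling law $a$ (e.g.\ $a=\delta_D$), your direct argument gives irreducibility for $\nu R$ with $R=\int_0^\infty e^{-t}P_t\,dt$ rather than for $\nu$ itself, which is harmless since you conclude with $\pi$.
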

	
	\begin{proof}
		By Theorems~\ref{thm:existence} and \ref{thm:strong-markov}, \(X\) is a conservative Borel right process on the
		locally compact separable metric space \(E\).
		Together with Assumption \assref{ass:Lyapunov}, the hypotheses of 
		\cite[Theorem~4.2]{meyn1993stability3} are satisfied. (i)-(iii) follow. (iv) is a standard consequence of positive
		Harris recurrence \cite{glynn1992uniform}.
	\end{proof}	
	\begin{corollary}
		\label{cor:geometric-ergodicity}
		In addition to Theorem~\ref{thm:harris}, assume that there exists a measurable function $G:E \to \R$ such that $A_m^\ast V=G$ on \(O_m\) for all $m \ge 1$. If, furthermore, $X$ is aperiodic then it is $V$--uniformly geometrically ergodic:
		there exist constants $M<\infty$ and $c>0$ such that
		\[
		\|P_t(x,\cdot)-\pi\|_V
		:=\sup_{|f|\le V}\left|P_t f(x)-\pi(f)\right|
		\le M V(x)e^{-c t},
		\qquad x\in E,\ t\ge0.
		\]
	\end{corollary}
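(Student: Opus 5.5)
The plan is to reduce the corollary to the continuous-time \(V\)-uniform ergodic theorem of Meyn and Tweedie \cite[Theorem~6.1]{meyn1993stability3}, in the same black-box spirit as Theorem~\ref{thm:harris}. That result requires four ingredients: a conservative Borel right process, \(\psi\)-irreducibility together with aperiodicity, all compact sets being petite (or at least a petite set \(C\) in the drift condition), and a drift inequality of the form \(A^\ast V\le -cV+b\mathbf 1_C\) for the extended generator. The first ingredient is given by Theorems~\ref{thm:existence} and~\ref{thm:strong-markov}. Irreducibility is item (ii) of Theorem~\ref{thm:harris}, with \(\psi=\pi\), and aperiodicity is assumed.

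The first step is to pass from the truncated generators to a single drift condition. Because \(A_m^\ast V=G\) on \(O_m\) for every \(m\), Assumption~\assref{ass:Lyapunov} gives
\[
G(x)\le -\lambda V(x)+b\mathbf 1_C(x)
\]
for all \(x\in\bigcup_m O_m=E\). This is exactly the form (V4) in \cite{meyn1993stability3}, which that paper states in terms of the stopped processes \(X^{(m)}\) and the exhaustion \((O_m)\). So no passage to the limit \(m\to\infty\) is needed: the condition is verified stratum-independently, for each \(m\), with one function \(G\).

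The second step concerns the skeleton chain. Aperiodicity, in the sense defined above, makes a \(D\)-skeleton strongly aperiodic. Combined with \(\pi\)-irreducibility, this makes the skeleton \(\psi\)-irreducible and aperiodic. The drift for \(G\) then transfers to the \(D\)-skeleton in the standard way. One applies Dynkin's formula along \(T_m\) to \(e^{\lambda t}V(X^{(m)}_t)\) and uses Fatou's lemma as \(m\to\infty\). Since \(\tau_n\to\infty\) and the process is non-explosive, \(T_m\to\infty\) almost surely, and one obtains
\[
P_D V\le e^{-\lambda D}V+b'\mathbf 1_{C'}
\]
with \(C'\) petite for the skeleton.

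The main obstacle is this petiteness transfer. The set \(C\) is petite for the continuous process, with some sampling distribution \(a\), and one must show it is petite for the \(D\)-skeleton. I would proceed as in \cite{meyn1993stability3}, using the strong aperiodicity minorization on the compact set \(K\) to convert continuous-time petite sets into skeleton small sets via the resolvent. The discrete-time \(V\)-uniform ergodic theorem \cite[Thm.~16.0.1]{meyn2012markov} then yields
\[
\|P_{nD}(x,\cdot)-\pi\|_V\le M' V(x)\rho^n .
\]

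The final step interpolates to all \(t\ge0\). Write \(t=nD+s\) with \(s\in[0,D)\) and use \(P_sV\le V+bD\), which again follows from the drift. This gives the stated bound with \(c=-\log\rho/D\).
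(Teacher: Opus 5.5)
Your opening reduction is, in substance, what the paper does. The paper's proof is a one-line appeal to a continuous-time $V$-uniform ergodic theorem of Meyn--Tweedie (\cite[Thm.~20.3.2]{meyn2012markov}). It uses the single function $G$ only to convert the truncated identities defining $D(A_m^\ast)$ into the local-martingale form of the extended generator required there. Citing the truncated criterion of \cite{meyn1993stability3} instead is a legitimate variant. Since Assumption~\assref{ass:Lyapunov} is already written in that truncated form, the drift hypothesis would then be met directly, and your Step~1 becomes harmless but unnecessary.

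The problem is that you then present Steps~2--4 as the actual argument, and as written they do not close. There are two gaps.

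\emph{Skeleton irreducibility.} Continuous-time $\pi$-irreducibility does not by itself pass to skeletons. A unit-speed rotation of a circle of length $D$ is Lebesgue-irreducible in continuous time, yet its $D$-skeleton is the identity. The paper's aperiodicity, read literally, is only a minorization $P_D(x,\cdot)\ge\varepsilon\rho$ for $x\in K$. So $\psi$-irreducibility of the $D$-skeleton must either be built into that definition or proved, and ``combined with $\pi$-irreducibility'' does not do it. It can be proved, but it takes work. Invariance gives $\rho\le\pi/(\varepsilon\pi(K))$. One must then still exclude a nontrivial $P_D$-invariant set disjoint from $K$, which is where periodic behaviour of the continuous-time dynamics has to be ruled out.

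\emph{Petiteness for the skeleton.} The Dynkin--Fatou computation you describe yields only $P_DV\le e^{-\lambda D}V+b/\lambda$, with a constant rather than an indicator. To reach the form needed for \cite[Thm.~16.0.1]{meyn2012markov}, you must take $C'=\{V\le R\}$ for large $R$, not $C$. Assumption~\assref{ass:Lyapunov} makes only $C$ petite, and only for $X$. Showing that $\{V\le R\}$ is petite for the $D$-skeleton requires two steps. First, use the drift to show $C$ is reachable uniformly from $\{V\le R\}$. Second, transfer petiteness from $X$ to the skeleton, which itself relies on skeleton irreducibility. This is exactly the step you identify as the main obstacle and leave at a reference to \cite{meyn1993stability3}.

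These are precisely the pieces the cited theorem absorbs. Either rest on the citation, as the paper does, or supply both arguments. The final interpolation from $nD$ to general $t$, via $P_sV\le V+bs$, is fine.
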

	
	\begin{proof}
		This is Theorem 20.3.2 in \cite{meyn2012markov}. The extended generator definition assumed therein, which is in local martingale terms, follows from $V \in D(A_m^\ast)$ for every $m \ge 1$ and the existence of a single function $G:E \to \R$ such that $A_m^\ast V=G$ on \(O_m\).
	\end{proof}

	\begin{remark}[Stratified verification of \assref{ass:Lyapunov}]
		\label{rem:stratified-CD2-verification}	
		Assume that, for each stratum \(S_\alpha\), the restriction
		\(V_\alpha:=V|_{S_\alpha}\) is regular enough for the intra-stratum expression
		\(L_\alpha V_\alpha\) to be well defined. Define the formal stratified drift
		\(
		G(x)
		:=
		L_\alpha V_\alpha(x)
		+
		JV(x),
		\text{ for } x\in S_\alpha .
		\)
		
		First, for every \(m\ge1\), verify the local boundedness conditions
		\(
		\sup_{x\in O_m}
		|L_{\alpha(x)}V_{\alpha(x)}(x)|
		<\infty,
		\)
		and 
		\[
		\sup_{x\in O_m}
		\int_E \left|V(x)-V(y)\right|\nu(x,dy)
		<\infty.
		\]
		These estimates ensure that the drift and jump compensator are integrable up to
		the stopping time \(T_m\). Second, verify the stratumwise Dynkin's formula along
		each intra-stratum diffusion: for every \(\alpha\), every \(x\in S_\alpha\), and
		every bounded stopping time \(\tau\) before the diffusion leaves \(O_m\cap S_\alpha\),
		\[
		V_\alpha(\eta_\tau)-V_\alpha(\eta_0)
		-
		\int_0^\tau L_\alpha V_\alpha(\eta_s)\,ds
		\]
		is a martingale. Combining this stratumwise martingale identity with the
		compensator of the jump measure gives
		\[
		V(X_t^{(m)})-V(X_0)
		-
		\int_0^t
		\mathbf 1_{O_m}(X_s^{(m)})G(X_s^{(m)})\,ds
		\]
		as a martingale. Hence
		\(
		V\in D(A_m^*) \text{ and }
		A_m^*V(x)=G(x),
		\text{ for } x\in O_m.
		\)
		Thus the Lyapunov drift can
		be checked by estimating separately the intra-stratum contribution \(L_\alpha V\)
		and the jump contribution
		\(
		JV(x).
		\)
	\end{remark}
	
	\section{A rank-switching diffusion on the covariance cone}
	\label{sec:cov-example}
	
	In this section we construct a rank-switching diffusion on the cone of
	positive semidefinite covariance matrices. Matrix-valued stochastic processes on positive definite or correlation matrices have been studied in several directions, including positive definite jump diffusions \cite{MayerhoferPfaffelStelzer2011} and mean-reverting diffusions on correlation matrices \cite{AhdidaAlfonsi2013}. Our construction is different in that the full positive semidefinite cone is treated as a rank-stratified space, with continuous fixed-rank dynamics and stochastic transitions between different ranks.
	
	The covariance-cone model is a natural example of the abstract framework.
	The state variable is a positive semidefinite matrix, and its rank is an
	intrinsic geometric property rather than an external discrete label. The
	fixed-rank components are smooth manifolds of different dimensions, while
	the full cone carries the ambient topology inherited from the Euclidean space
	of symmetric matrices. Rank-switching is therefore a transition between strata of a singular geometric state space.

	\subsection{State space}
	In this Section, we introduce $\Cov(n)$ as a stratified space and describe the geometry of the fixed-rank covariance strata in coordinates
	adapted to the later stochastic construction. The point is to remove the non-uniqueness of matrix factorizations.
	A rank-\(r\) covariance matrix can be written as \(Q\exp(H)Q^\top\), where
	\(Q\in\St(n,r)\) describes its image subspace and \(H\in\Sym(r)\) describes the
	logarithmic covariance on that subspace. This representation is not unique:
	changing \(Q\) by a right orthogonal transformation changes \(H\) by conjugation.
	The associated-bundle quotient below records exactly this equivalence.

	Let
	\(
	\Sym(n)
	\)
	denote the Euclidean space of real symmetric \(n\times n\) matrices, endowed
	with the Frobenius norm \(\|\cdot\|_F\). Define
	\[
	\Cov(n)
	:=
	\{\Sigma\in \Sym(n):\Sigma\succeq 0\}.
	\]
	We endow \(\Cov(n)\) with the metric inherited from
	\(\Sym(n)\), that is
	\(
	d(\Sigma,\Lambda):=\|\Sigma-\Lambda\|_F.
	\)
	Since \(\Cov(n)\) is a closed convex cone in the finite-dimensional
	Euclidean space \(\Sym(n)\), it is locally compact and separable.
	
	For \(r=0,\ldots,n\), set
	\(
	S_r
	:=
	\{\Sigma\in\Cov(n):\rk(\Sigma)=r\}.
	\)
	Then \(\Cov(n)=\bigsqcup_{r=0}^n S_r.\)

	The stratum \(S_0\) is the singleton \(\{0\}\), equipped with its trivial
	smooth structure and trivial Riemannian metric. Fix \( r=1,\dots,n\). Let \(\St(n,r)\) denote the Stiefel manifold of orthonormal
	\(r\)-frames in \(\mathbb R^n\) and \(O(r)\) denote the group of $r$-dimensional orthogonal matrices. Define the right $O(r)$-action on
	\(\St(n,r) \times \Sym(r)\) by
	\begin{equation}
		\label{eq:right-action-log}
		R_G(Q,H):=(Q,H)\cdot G
		:=
		\bigl(QG,G^\top H G\bigr), \quad Q \in \St(n,r), H \in \Sym(r), G \in O(r).	
	\end{equation}
	
	We define the associated fiber bundle
	\(
		M_r
		:=
		\St(n,r)\times_{O(r)}\Sym(r)
		:=
		(\St(n,r)\times \Sym(r))/O(r),
	\)
	where the right action in \eqref{eq:right-action-log} is quotiented out. \(M_r\) is a fiber bundle associated with the principal \(O(r)\)-bundle
	\(
	\pi_{\Gr}:\St(n,r)\to \Gr(r,n),
	\ 
	Q \mapsto \Span(Q),
	\)
	where $\Gr(r,n)$ is the Grassmann manifold of $r$-dimensional subspaces of $\R^n$, \cite{bendokat2024grassmann,bonnabel2010riemannian}.
	
	We denote by \(q_r: \St(n,r)\times \Sym(r)\to M_r\) the quotient map and write equivalence classes as \([Q,H]=q_r(Q,H)\). Let
	\(
	\phi_r: M_r \to S_r, \
	[Q,H]\mapsto Q\exp(H)Q^\top.
	\)
	The map \(\phi_r\) is well defined because the matrix exponential is equivariant under orthogonal conjugation, and is a diffeomorphism.
	
	\begin{lemma}
		\label{lem:phi-diffeomorphism}
		The map \(
		\phi_r: M_r \to S_r, \
		[Q,H]\mapsto Q\exp(H)Q^\top .
		\) is a diffeomorphism
	\end{lemma}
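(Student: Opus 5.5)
We will prove that $\phi_r$ is well defined, smooth, bijective, and an immersion between manifolds of the same dimension; these four facts together give a diffeomorphism.

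\emph{Manifold structure and well-definedness.} The right action \eqref{eq:right-action-log} is free, because $QG=Q$ with $Q^\top Q=I_r$ forces $G=I_r$. It is proper, because $O(r)$ is compact. Hence $M_r$ is a smooth manifold and $q_r$ is a smooth submersion. Its dimension is
\[
\dim M_r = \Bigl(nr-\tfrac{r(r+1)}2\Bigr)+\tfrac{r(r+1)}2-\tfrac{r(r-1)}2 = nr-\tfrac{r(r-1)}2 .
\]
This equals $\dim S_r$, since $S_r$ is an open subset of the smooth submanifold of rank-$r$ symmetric matrices, which has dimension $nr-r(r-1)/2$ (standard; it can also be recovered from the computation below). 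The map $\tilde\phi(Q,H)=Q\exp(H)Q^\top$ is smooth on $\St(n,r)\times\Sym(r)$. It is $O(r)$-invariant because $\exp(G^\top HG)=G^\top\exp(H)G$. Therefore it descends to a smooth map $\phi_r$ with $\phi_r\circ q_r=\tilde\phi$.

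\emph{Bijectivity.} For surjectivity, take $\Sigma\in S_r$. The spectral theorem gives $\Sigma=QDQ^\top$ with $Q\in\St(n,r)$ and $D$ diagonal positive, so $\Sigma=\phi_r[Q,\log D]$. For injectivity, suppose $Q_1e^{H_1}Q_1^\top=Q_2e^{H_2}Q_2^\top$. Both columns spaces equal $\Imm\Sigma$, so $Q_2=Q_1G$ with $G=Q_1^\top Q_2\in O(r)$. Multiplying on the left by $Q_1^\top$ and on the right by $Q_1$ gives $e^{H_1}=Ge^{H_2}G^\top=e^{GH_2G^\top}$. Since $\exp:\Sym(r)\to\mathrm{SPD}(r)$ is injective (the logarithm is its inverse), $H_1=GH_2G^\top$, that is, $(Q_2,H_2)=(Q_1,H_1)\cdot G$.

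\emph{Immersion.} This is the main step. By equivariance it suffices to work at a point $(Q,H)$, and after rotating frames we may take $Q=[I_r;0]$ and $H$ diagonal. Tangent vectors to $\St(n,r)$ have the form $\dot Q=Q\Omega+Q_\perp B$ with $\Omega$ skew and $B\in\R^{(n-r)\times r}$. Writing $P=e^H$, the differential of $\tilde\phi$ is
\[
d\tilde\phi(\dot Q,\dot H)=Q\bigl(\Omega P-P\Omega+d\exp_H(\dot H)\bigr)Q^\top+Q_\perp BPQ^\top+QPB^\top Q_\perp^\top .
\]
The off-diagonal block $BP$ vanishes only if $B=0$, since $P$ is invertible. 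On the diagonal block, the map $\Sym(r)\to\Sym(r)$, $\dot H\mapsto d\exp_H(\dot H)$, is an isomorphism because $\exp$ is a diffeomorphism onto $\mathrm{SPD}(r)$. So the kernel of $d\tilde\phi$ is exactly $\{(Q\Omega,\dot H): d\exp_H(\dot H)=P\Omega-\Omega P\}$. This is an $r(r-1)/2$-dimensional space, and it coincides with the tangent space to the $O(r)$-orbit: differentiating $(QG,G^\top HG)$ at $G=I$ gives $(Q\Omega,[H,\Omega])$, and $d\exp_H([H,\Omega])=[e^H,\Omega]$ by differentiating $e^{G^\top HG}=G^\top e^HG$. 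Hence $d\phi_r$ is injective.

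\emph{Conclusion.} Since $\dim M_r=\dim S_r$, the injective differential $d\phi_r$ is bijective, so $\phi_r$ is a bijective local diffeomorphism and hence a diffeomorphism. The same rank count also shows that the image of $d\phi_r$ has full dimension $nr-r(r-1)/2$, which independently confirms the dimension of $S_r$.
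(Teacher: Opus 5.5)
Your proof is correct, but it takes a different route from the paper. The paper argues bundle-theoretically over $\Gr(r,n)$. Using a local section $R_U$ of $\St(n,r)\to\Gr(r,n)$, it trivializes $\pi_M^{-1}(U)\cong U\times\Sym(r)$ and $\pi_S^{-1}(U)\cong U\times\Sym^+(r)$, the latter via $\Sigma\mapsto(\Imm\Sigma,R_U(P)^\top\Sigma R_U(P))$. In these charts $\phi_r$ is simply $(P,K)\mapsto(P,\exp K)$. The paper then glues the local inverses $(P,A)\mapsto(P,\log A)$ into the explicit global inverse $\psi_r(\Sigma)=[Q,\log(Q^\top\Sigma Q)]$, where $\Span(Q)=\Imm\Sigma$, using $O(r)$-equivariance of the matrix logarithm on overlaps.

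You argue globally instead. Smoothness comes from descent through the submersion $q_r$. Bijectivity comes from the spectral theorem and injectivity of $\exp$ on $\Sym(r)$. The immersion property comes from your computation that $\ker d\tilde\phi=\{(Q\Omega,[H,\Omega]):\Omega^\top=-\Omega\}$ is exactly the vertical space of $q_r$. The dimension count $nr-r(r-1)/2$ and the inverse function theorem then finish the argument.

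The paper's route gives an explicit formula for $\phi_r^{-1}$ and shows $\phi_r$ is a fiber-bundle isomorphism that is fiberwise just $\exp:\Sym(r)\to\Sym^+(r)$. However, it takes for granted that the trivialization of $S_r$ above is a diffeomorphism, i.e.\ that $\Sigma\mapsto\Imm\Sigma$ is smooth on $S_r$ and $S_r$ is a smooth bundle over $\Gr(r,n)$; this is close to the content of the lemma. Your route needs only the standard fact that $S_r$ is an embedded submanifold of $\Sym(n)$ of dimension $nr-r(r-1)/2$, and the real work is a transparent linear-algebra check.

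One small caveat concerns your parenthetical claim that this dimension ``can also be recovered from the computation.'' The rank count alone does not do this, because an injective immersion need not be a homeomorphism onto its image. You would also need $\phi_r^{-1}$ to be continuous on $S_r$ with the subspace topology, which is what an explicit inverse like the paper's $\psi_r$ supplies. Since you rely on the standard fact anyway, this does not affect your argument.
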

	The proof is standard and moved to Appendix~\ref{appsec:cov-application}.

	Equip
	\(\Sym(r)\), \(\St(n,r)\) and \(\Gr(r,n)\) with the Frobenius metric.
	The product metric on
	\(\St(n,r)\times\Sym(r)\) is invariant under the
	right \(O(r)\)-action in \ref{eq:right-action-log}. Hence it descends to a Riemannian metric
	\(\tilde g_r\) on
	\(
	M_r.
	\)
	We define the Riemannian metric \(g_r\) on \(S_r\) by transport through
	\(\phi_r\), that is
	\(
	g_r
	:= \tilde g_r \circ \phi_r^{-1}.
	\)
	
	\begin{proposition}
		\label{prop:cov-state-space}
		The space
		\(\Cov(n)=\bigsqcup_{r=0}^n S_r\)
		satisfies Definition \ref{def:stratified-space}.
	\end{proposition}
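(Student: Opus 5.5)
We verify the ambient requirements first and then (S1) and (S2) stratum by stratum. As already observed, \(\Cov(n)\) is a closed subset of the finite-dimensional Euclidean space \(\Sym(n)\), so it is a locally compact separable metric space. The index set \(I=\{0,\dots,n\}\) is finite, and the sets \(S_r\) are pairwise disjoint with union \(\Cov(n)\), because every positive semidefinite matrix has a unique rank.

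For (S1), the case \(r=0\) is trivial: \(S_0=\{0\}\) is a connected zero-dimensional manifold. For \(r\ge1\), Lemma~\ref{lem:phi-diffeomorphism} identifies \(S_r\) with \(M_r\). The group \(O(r)\) is compact and acts freely on \(\St(n,r)\) by right multiplication (if \(QG=Q\) then \(G=Q^\top Q G=I\)), hence freely and properly on \(\St(n,r)\times\Sym(r)\). Therefore \(M_r\) is a smooth manifold without boundary of dimension \(nr-r(r+1)/2+r(r+1)/2=nr-r(r-1)/2\). Connectedness follows from a path argument. \(\St(n,r)\) is connected for \(r<n\). For \(r=n\), \(\St(n,n)=O(n)\) has two components, but the right action of \(G=\mathrm{diag}(-1,1,\dots,1)\in O(r)\) exchanges them, so the quotient is still connected. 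Since \(\Sym(r)\) is connected, \(M_r\) is the continuous image of a connected set. Alternatively, \(S_r\) is connected directly, being the image of the connected set \(\{BB^\top: B\in\R^{n\times r},\ \rk B=r\}\) for \(r<n\), and a convex open cone for \(r=n\). The Riemannian metric \(g_r\) is the transported metric defined above; it is well defined because the product metric is \(O(r)\)-invariant and the action is free and proper.

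For (S2), we must show that the inclusion \(S_r\hookrightarrow\Cov(n)\) is a homeomorphism onto its image, where \(S_r\) carries the manifold topology transported via \(\phi_r\). The key point is that \(S_r\) is a smooth embedded submanifold of \(\Sym(n)\) and that \(\phi_r\), viewed as a map into \(\Sym(n)\), is an injective immersion which is a homeomorphism onto its image with the subspace topology. We prove this directly. Continuity of \(\phi_r:M_r\to\Sym(n)\) is clear. For continuity of the inverse on \(S_r\), take \(\Sigma_k\to\Sigma\) in \(S_r\). The nonzero eigenvalues of \(\Sigma_k\) converge to those of \(\Sigma\), which are bounded away from \(0\), so they stay in a compact subset of \((0,\infty)\). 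The spectral projection onto the range of \(\Sigma_k\) converges to that of \(\Sigma\), by continuity of Riesz projections for an isolated cluster of eigenvalues. Choosing frames \(Q_k\) by a compactness argument on \(\St(n,r)\), every subsequence has a further subsequence with \(Q_k\to Q\), and then \(H_k=\log(Q_k^\top\Sigma_kQ_k)\to\log(Q^\top\Sigma Q)\). Hence \([Q_k,H_k]\to\phi_r^{-1}(\Sigma)\) in \(M_r\), since the quotient map is continuous and the limit class is unique. This is the step that needs care: the eigenvalues must stay bounded away from zero within a fixed rank, which fails only at the boundary of the stratum, where rank drops.

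Finally, we check that each \(S_r\) is Borel. The set \(\{\Sigma:\rk\Sigma\ge r\}\) is open in \(\Cov(n)\), as the nonvanishing locus of some \(r\times r\) minor. Hence \(S_r=\{\rk\ge r\}\setminus\{\rk\ge r+1\}\) is a difference of open sets, and in particular Borel. This establishes Definition~\ref{def:stratified-space}.
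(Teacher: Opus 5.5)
Your proof is correct. It shares the paper's skeleton: local compactness of the ambient space, (S1) through the identification \(S_r\cong M_r\), and Borel measurability via minors (your open sets \(\{\rk\ge r\}\) are the complements of the paper's closed sets \(\Cov_{\le r-1}(n)\)). It departs from the paper on (S2) and on connectedness.

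On (S2), the paper uses a single observation. Since \(\Cov(n)\) carries the subspace topology of \(\Sym(n)\), the topology \(S_r\) inherits from \(\Cov(n)\) is the one it inherits from \(\Sym(n)\). This tacitly treats \(S_r\) as carrying its subspace topology from the start, and leaves compatibility with the bundle picture to Lemma~\ref{lem:phi-diffeomorphism}. That lemma in turn takes for granted that \(\Sigma\mapsto\Imm(\Sigma)\) is a well-behaved bundle projection on \(S_r\).

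You instead equip \(S_r\) with the structure transported from \(M_r\), and prove directly that \(\phi_r\) is a topological embedding into \(\Sym(n)\). The key is that at fixed rank the positive eigenvalues stay in a compact subset of \((0,\infty)\), so the range projections and the logarithms converge. This makes explicit the spectral-gap fact the paper relies on without stating, at the cost of a few extra lines.

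On connectedness, the paper uses a connected base \(\Gr(r,n)\) and a connected fiber \(\Sym(r)\), which covers \(r=n\) uniformly. Your route through \(\St(n,r)\) needs the separate component-swapping argument for \(\St(n,n)=O(n)\), and you supply it correctly.

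One small slip: the displayed dimension count \(nr-r(r+1)/2+r(r+1)/2\) equals \(nr\), because you omitted \(-\dim O(r)=-r(r-1)/2\). Your final value \(nr-r(r-1)/2\) is right, and the dimension plays no role in the definition anyway.
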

	
	The proof is standard and moved to Appendix~\ref{appsec:cov-application}.

	\subsection{Intra-stratum diffusions}
	
	We now define the intra-stratum diffusion on each \(S_r\).	
	For \(r=0\), the process is constant:
	\(
	\Sigma_t\equiv 0.
	\)
	Its generator is
	\(
	L_0 f(0)=0.
	\)
	
	Let \(1\le r\le n\). We define the diffusion in logarithmic associated-bundle
	coordinates. The construction is performed in three steps. First, we define a
	horizontal Stiefel component. This component is defined using the horizontal distribution of the principal
	bundle \(\St(n,r)\to\Gr(r,n)\), that is 
	\(\cl H_Q=\{V\in T_Q\St(n,r):Q^\top V=0\}\) \cite{bendokat2024grassmann}, and describes the stochastic evolution
	of the \(r\)-dimensional image subspace.
	
	Second, we define a fiber component on
	\(\Sym(r)\), given by an Ornstein--Uhlenbeck dynamics. Finally, we project the product process
	\((Q_t,H_t)\) to the associated bundle
	\(M_r\)
	and transport it to the covariance stratum \(S_r\) through the diffeomorphism
	\(\phi_r\).
	
	On the compact manifold \(\St(n,r)\), let \(Q_t\) be the
	horizontal lift of Brownian motion on \(\Gr(r,n)\) defined as follows. Let $E_{ij}$ be the standard basis of \(\R^{n\times r}\) and define
	\(X_{ij}(Q):=\Pi_Q^\perp E_{ij}:=(I_n-Q Q^\top)E_{ij}.\)
	The family \((X_{ij})_{i,j}\) is the projection of $(E_{ij})_{i,j}$ onto the horizontal bundle and
	therefore it is a global Parseval frame of \(\cl H\). We define the Stratonovich SDE
	\begin{equation}
		\label{eq:q-sde}
		dQ_t
		=
		\sum_{i=1}^n\sum_{j=1}^r X_{ij}(Q_t)\circ dB_t^{ij},
	\end{equation}
	or equivalently
	\(dQ_t
	=
	(I_n-Q_tQ_t^\top)\circ dB_t,\)
	where $B_t=(B_t^{ij})$ is a Brownian motion in $\R^{}$.
	The corresponding process remains on the Stiefel manifold. Indeed, using the Stratonovich chain rule,
	\(
	d(Q_t^\top Q_t)
	=
	dQ_t^\top Q_t+Q_t^\top dQ_t=0.
	\)
	We call $(P_t^{\St})$ the transition semigroup of \(Q_t\) on \(C(\St(n,r))\), and \((L_Q,D(L_Q))\) its closed generator.

	For \(f\in C^\infty(\St(n,r))\), define
	\(
	\Delta_Q^{\hor}f
	:=
	\sum_{i=1}^n\sum_{j=1}^r X_{ij}^2f,
	\)
	where \(X_{ij}^2u\) means \(X_{ij}(X_{ij}u)\).
	Then,
	\begin{equation}
		\label{eq:c2-subset-domain}
		C^{\infty}(\St(n,r))\subset D(L_Q), \quad
		L_Qf=\frac12\Delta_Q^{\hor}f, \quad f \in C^\infty(\St(n,r)).
	\end{equation}
	Indeed, by Itô's formula,
	\[
	P_t^{\St}f-f
	=
	\frac12\int_0^t P_s^{\St}(\Delta_Q^{\hor}f)\,ds.
	\]
	Since \(\Delta_Q^{\hor}f\in C(\St(n,r))\) and \(P_t^{\St}\) is strongly
	continuous on \(C(\St(n,r))\), it follows that
	\(
	(P_t^{\St}f-f)/t
	\to
	(1/2)\Delta_Q^{\hor}f\)
	 in \(C(\St(n,r)).
	\)

	Independently, let \(H_t\) be the Ornstein--Uhlenbeck process on the Euclidean
	space \(\Sym(r)\):
	\begin{equation}
		\label{eq:h-sde}
		dH_t
		=
		-\theta_r H_t\,dt+\sigma_r\,dW_t,
		\qquad
		\theta_r>0,\quad \sigma_r>0,
	\end{equation}
	where \(W_t^{(r)}\) is Brownian motion in
	\(\Sym(r)\) with respect to the Frobenius inner product.
	Let \(P_t^{\Sym}\) be the corresponding transition semigroup on \(C_0(\Sym(r))\), and let \((L_H,D(L_H))\) be its closed generator. Again, using It\^o's formula, we can show \( C_c^\infty(\Sym(r))\subset D(L_H), \) and 
	\[ L_H g = -\theta_r\langle H,\nabla_Hg\rangle_F +\frac{\sigma_r^2}{2}\Delta_H g, \quad g \in C_c^\infty(\Sym(r)), \]
	where $\Delta_H$ is the Euclidean Laplacian on $\Sym(r)$.
	
	Let \((\widetilde P_t^r)_{t\ge0}\) be the transition semigroup of the product
	process \((Q_t,H_t)\) on \(\St(n,r)\times\Sym(r)\), and let
	\((\widetilde L_r,D(\widetilde L_r))\) be its generator on
	\(C_0(\St(n,r)\times\Sym(r))\). Since \(Q_t\) and \(H_t\) are independent, we have
	\(\widetilde P_t^r(f \otimes g)=(P_t^{\St}f) \otimes (P_t^{\Sym}g)\) for \( f \in C(\St(n,r))\) and \(g \in C_0(\Sym(r)).\)
	Moreover, on separated functions \(f\otimes g\), with
	\(f\in D(L_Q)\) and \(g\in D(L_H)\),
	\(
	\widetilde L_r(f\otimes g)
	=
	(L_Qf)\otimes g+f\otimes(L_Hg).
	\)

	Suppose that \((Q_t,H_t)\) solves the product SDE on the product space $\St(n,r)\times \Sym(r)$ with initial condition \((Q,H)\) and call \(
	(\widehat Q_t,\widehat H_t):=(Q_tG,G^\top H_tG)
	\)
	Then
	\[
	d\widehat Q_t
	=
	dQ_t\,G
	=
	(I_n-Q_tQ_t^\top)\circ dB_t\,G=(I_n -\widehat Q_t\widehat Q_t)\circ d(\widehat B_t) .
	\]
	The process \(\widehat B_t:= B_tG\) is again a standard Brownian motion in
	\(\mathbb R^{n\times r}\), because right multiplication by \(G\) is an orthogonal
	transformation of \(\mathbb R^{n\times r}\).
	
	Similarly,
	\[
	\begin{aligned}
		d\widehat H_t
		=
		G^\top dH_tG
		=
		-\theta_rG^\top H_tG\,dt
		+
		\sigma_r\,d(G^\top W_t^{(r)}G)=
		-\theta_r\widehat H_t\,dt
		+
		\sigma_r\,d\widehat W_t^{(r)},
	\end{aligned}
	\]
	where \(\widehat W_t^{(r)}\) is again a Brownian motion in
	\(\Sym(r)\), because conjugation is orthogonal with respect to the Frobenius metric.
	
	It follows that the process \((Q_t,H_t)\) is \(O(r)\)-equivariant, that is
	\begin{equation}
		\label{eq:invariance}\cl L_{R_G(Q,H)}((Q_t,H_t))=\cl L_{(Q,H)}(R_G(Q_t,H_t)),\end{equation}
	and, consequently, it descends to a diffusion \([Q_t,H_t]\) on
	\(
	M_r.
	\)
	Transporting through \(\phi_r\), we obtain an \(S_r\)-valued diffusion
	\(
	\Sigma_t^{(r)}
	=
	Q_t \exp(H_t) Q_t^\top.
	\) We denote its transition semigroup by \((P_t^r)_{t\ge0}\) and its generator by
	\((L_r,D(L_r))\).

	\begin{proposition}
		\label{prop:cov-intrastratum-diffusions}
		For every \(0\le r\le n\), the fixed-rank covariance dynamics $\Sigma_t$ defined above
		gives a conservative Markov family on \(S_r\) that satisfies Assumption \assref{ass:diffusions}. In particular, its transition semigroup \((P_t^r)_{t\ge0}\) is a strongly continuous contraction
		semigroup on \(C_0(S_r)\). Moreover,
		\(
		C_c^\infty(S_r)
		\)
		is a measure-determining core for \(L_r\). For \(r=0\), \(L_0f(0)=0\). For \(1\le r\le n\), \(L_r\) is identified
		in associated-bundle coordinates by
		\[
		(L_r f)\circ\phi_r\circ q_r
		=
		\left(
		\frac12\Delta_Q^{\mathrm{hor}}
		-\theta_r\langle H,\nabla_H\rangle_F
		+
		\frac{\sigma_r^2}{2}\Delta_H
		\right)
		(f\circ\phi_r\circ q_r),
		\qquad f\in C_c^\infty(S_r).
		\]
	\end{proposition}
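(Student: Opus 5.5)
The case \(r=0\) is trivial. \(S_0=\{0\}\), \(C_0(S_0)=\mathbb R\) and \(P_t^0=\id\). Hence \(L_0=0\), and \(C_c^\infty(S_0)=\mathbb R\) is a measure-determining core. So fix \(1\le r\le n\).

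The plan is to prove everything upstairs on \(\widetilde M_r:=\St(n,r)\times\Sym(r)\) and push it down through the surjective submersion \(\pi_r:=\phi_r\circ q_r\). By Lemma~\ref{lem:phi-diffeomorphism}, \(\pi_r\) is a smooth principal \(O(r)\)-bundle projection. Since \(O(r)\) is compact, \(\pi_r\) is also proper.

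\textbf{Step 1: the product process is Feller.} The horizontal SDE \eqref{eq:q-sde} has smooth coefficients on a compact manifold, so it is a Feller diffusion on \(C(\St(n,r))\). The Ornstein--Uhlenbeck process \eqref{eq:h-sde} is Feller on \(C_0(\Sym(r))\), with explicit Gaussian transition kernels. Because the two processes are independent and their semigroups are strongly continuous contractions, their tensor product \(\widetilde P^r_t\) is a strongly continuous contraction semigroup on \(C_0(\widetilde M_r)\). To see this, use density of the algebraic tensor product (Stone--Weierstrass) and uniform boundedness. Paths are continuous and conservative, since neither SDE explodes. Measurability of \((Q,H)\mapsto\) law follows from the continuous dependence of SDE solutions on initial data.

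\textbf{Step 2: descent to \(S_r\).} The invariance \eqref{eq:invariance} implies that \(\widetilde P^r_t\) preserves \(O(r)\)-invariant functions. Composition \(f\mapsto f\circ\pi_r\) is an isometric isomorphism from \(C_0(S_r)\) onto the \(O(r)\)-invariant part of \(C_0(\widetilde M_r)\); this uses properness of \(\pi_r\). Setting \(P_t^r f\circ\pi_r:=\widetilde P^r_t(f\circ\pi_r)\) therefore yields a Feller semigroup on \(S_r\). By Dynkin's criterion, \(\Sigma_t=\pi_r(Q_t,H_t)\) is Markov with this semigroup. Choosing a Borel section of \(\pi_r\) gives \(D1\). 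The path properties in \(D2\) follow from continuity of \(\pi_r\), and \(D3\) is inherited from Step 1.

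\textbf{Step 3: the generator on \(C_c^\infty(S_r)\) and the core property.} Take \(f\in C_c^\infty(S_r)\). Then \(F:=f\circ\pi_r\) lies in \(C_c^\infty(\widetilde M_r)\), because \(\pi_r\) is proper. Apply Itô's formula to the product SDE, using \eqref{eq:c2-subset-domain} and the OU computation. This gives \(F\in D(\widetilde L_r)\) with \(\widetilde L_rF=(\frac12\Delta^{\hor}_Q-\theta_r\langle H,\nabla_H\rangle_F+\frac{\sigma_r^2}{2}\Delta_H)F\). The right-hand side is continuous, \(O(r)\)-invariant and compactly supported. The same argument as for \eqref{eq:c2-subset-domain} then shows \(f\in D(L_r)\) and yields the stated formula.

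The core property is the main obstacle. I would use the standard criterion (Ethier--Kurtz, Prop.~I.3.3): a dense subspace \(\cl C\subset D(L_r)\) is a core if it is invariant under \(P_t^r\). Invariance fails for \(C_c^\infty\), so I would instead argue in two stages. First, show that the \(O(r)\)-invariant Schwartz-type space is a core upstairs. This space consists of functions smooth in \((Q,H)\) with all derivatives decaying faster than polynomials in \(H\). Its invariance under \(\widetilde P_t^r\) comes from smoothness of the compact-manifold heat-type semigroup in \(Q\) together with the explicit Gaussian OU kernel, which preserves Schwartz class. Invariance under \(O(r)\) comes from \eqref{eq:invariance}. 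Second, approximate elements of this space in graph norm by \(C_c^\infty\) functions. Multiply by invariant cutoffs \(\chi(\|H\|_F/k)\), and control the drift term \(\theta_r\langle H,\nabla_H\chi_k\rangle\) by the rapid decay of the function. Push both stages down via Step 2.

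Finally, \(C_c^\infty(S_r)\) is measure determining because it is uniformly dense in \(C_0(S_r)\), and finite Borel measures on a manifold are determined by \(C_0\) through Riesz.
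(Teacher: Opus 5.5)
Your proposal is correct and follows the same architecture as the paper. You prove everything for the product process on $\St(n,r)\times\Sym(r)$, push it down through the proper quotient using the equivariance \eqref{eq:invariance}, transport it by $\phi_r$, and get the core from a semigroup-invariant Schwartz-type class followed by cutoffs in $H$. Two technical steps are done differently.

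\textbf{Feller property of the product semigroup.} The paper checks continuity, vanishing at infinity and strong continuity of $\widetilde P_t^r$ by hand, using BDG/Gronwall estimates for the $Q$-component and the explicit OU representation. Your argument is shorter and uses only the Feller property of each factor: the product is a Markov kernel, hence a contraction on bounded Borel functions, and you conclude by density of the algebraic tensor product and a $3\varepsilon$-argument.

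\textbf{Core.} The paper takes the algebraic tensor product $C^\infty(\St(n,r))\otimes\cl S(\Sym(r))$. Its invariance is immediate from $\widetilde P_t^r(f\otimes g)=(P_t^{\St}f)\otimes(P_t^{\Sym}g)$. The paper then applies cutoffs $\chi(H/m)$, which need not be $O(r)$-invariant, and restores invariance only at the end by Haar averaging; this costs nothing in the graph norm because averaging commutes with $\widetilde L_r$. You instead stay inside $C_0(\St(n,r)\times\Sym(r))^{O(r)}$ throughout, with a jointly defined invariant Schwartz-type class and radial, hence invariant, cutoffs $\chi(\|H\|_F/k)$. This removes the averaging step. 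The price is that invariance of your test class under $\widetilde P_t^r$ is harder to verify than in the paper. You need $\widetilde P_t^r$ to preserve joint smoothness in $(Q,H)$ and rapid decay of all mixed derivatives. That requires:
\begin{itemize}
\item the smooth stochastic flow of \eqref{eq:q-sde}, with moment bounds on its derivatives (Kunita, as in the paper). Since $\Delta_Q^{\hor}$ is degenerate, "smoothness of a heat-type semigroup" is not available from elliptic regularity; the flow argument is what you actually need.
\item a Peetre-type bound for the Gaussian part $e^{-\theta_r t}H+Z_t$.
\end{itemize}
You should also state explicitly that your class is dense in the invariant subspace, for example because it contains $f\circ\phi_r\circ q_r$ for every $f\in C_c^\infty(S_r)$.

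\textbf{D1 and the Markov property.} Your use of a Borel section together with Dynkin's lumpability criterion makes D1 and the Markov property of $\Sigma_t$ more explicit than in the paper. The paper simply asserts that weak continuity of the laws descends to $M_r$.
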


	The proof works first on the product space $\St(n,r)\times \Sym(r)$ and proves the desired properties separately for the $Q$- and $H$-components, before transporting them through the quotient map $q_r$ and the diffeomorphism $\phi_r$. Technical details are given in Appendix~\ref{appsec:cov-application}.

	\subsection{Rank-changing jumps}
	
	Here, we define the rank-changing jump mechanism. For the sake of simplicity, we use a state-independent refreshment-type mechanism: when a jump from
	rank \(r\) to rank \(s\) occurs, the post-jump covariance matrix is sampled from a
	prescribed probability law on \(S_s\).
	
	The vertex set is
	\(
	I=\{0,1,\ldots,n\}.
	\)
	For \(0\le r<n\), we include an upward edge
	\(
	e_r^+:r\to r+1,
	\)
	and for \(0 < r\le n\), we include a downward edge
	\(
	e_r^-:r\to r-1.
	\)	
	Fix constant jump rates
	\(
	\lambda_r^+\equiv c_r^+>0\) for \(
	r<n\), \(\lambda_n^+ \equiv 0\), and
	\(
	\lambda_r^- \equiv c_r^->0\) for 
	\(r>0,\) \(\lambda_0^-\equiv 0.
	\)

	Choose constants
	\(
	0<\underline \ell<\overline \ell<\infty
	\)
	and, for \(r=1,\ldots,n\), set
	\(
	A_r
	:=
	\{
	\Sigma\in S_r:
	\underline \ell\le \lambda_i^+(\Sigma)\le \overline \ell,
	\ i=1,\ldots,r
	\},
	\)
	where
	\(\lambda_1^+(\Sigma),\ldots,\lambda_r^+(\Sigma)\) are the positive
	eigenvalues of \(\Sigma\). 
	Thus \(A_r\) is compact in \(S_r\). Let \(R_r\) be a probability measure
	supported on \(A_r\) defined by 
	\(
	R_r(d\Sigma)=\rho_r(\Sigma)\vol_r(d\Sigma),
	\) with \(\rho_r(\Sigma)\ge a_r>0\text{ on }A_r.
	\)
	For \(r=0\), set
	\(
	A_0:=\{0\}
	\text{ and }
	R_0:=\delta_0.
	\)

	For $\Sigma \in S_r$, define the jump kernels
	\begin{gather*}
	K_r^+(\Sigma,\cdot)
	:=
	R_{r+1}(\cdot),
	\qquad 0 \le r < n,\\
	K_r^-(\Sigma,\cdot)
	:=
	R_{r-1}(\cdot),
	\qquad 0< r\le n.
	\end{gather*}
	
	The generator of the full rank-switching process acts on
	\(
	\cl D_0
	:=
	\bigoplus_{r=0}^n C_c^\infty(S_r)
	\)
	as follows. 
	For $\Sigma \in S_{r}$, let
	\(
	\nu(\Sigma,d\Xi)
	=
	\lambda_r^+R_{r+1}(d\Xi)+\lambda_r^-R_{r-1}(d\Xi).
	\)
	Define, for $f \in \cl D_0$,
	\[
	\begin{aligned}
		Jf(\Sigma)
		:=&
		\int_{\Cov(n)}
		\bigl(f(\Xi)-f(\Sigma)\bigr)\nu(\Sigma,d\Xi).
	\end{aligned}
	\]
	Finally set
	\(Af(\Sigma)
		:=
		L_r(f|_{S_r})(\Sigma)
		+Jf(\Sigma).
	\)

	The graph has finitely many vertices and finitely
	many edges. The kernels \(K_r^\pm\) are Borel probability kernels. Moreover,
	\(
	K_r^+(\Sigma,S_{r+1})=1,
	\text{ and }
	K_r^-(\Sigma,S_{r-1})=1.
	\)
	Since the target stratum has different rank from the starting stratum,
	\(
	K_r^\pm(\Sigma,\{\Sigma\})=0.
	\)
	The rates are constant, hence Borel. Finally, the total jump rate is uniformly
	bounded. Indeed
	\(
	\sup_{\Sigma\in\Cov(n)}\Lambda(\Sigma)
	\le
	\max_{0\le r\le n}(c_r^+ + c_r^-)
	<\infty.
	\)
	Thus Assumption \assref{ass:jumps} holds.
	
	We have therefore verified Definition \ref{def:stratified-space} and Assumptions \assref{ass:diffusions}, \assref{ass:jumps}. Consequently Theorems \ref{thm:existence}, \ref{thm:strong-markov} and \ref{thm:uniqueness} apply. In particular, the martingale problem for \((A,\cl D_0)\) is well-posed and there exists a unique conservative strong Markov solution process, denoted by \((\Sigma_t)_{t\ge 0}\).

	\subsection{Lyapunov drift, petite sets and aperiodicity}
	\label{subsec:cov-lyapunov}

	Define the function
	\(
	V:\Cov(n)\to[1,\infty),
	\ 
	\Sigma \mapsto 1+\Tr(\Sigma).
	\)
	It is continuous on \(\Cov(n)\) and smooth on each
	stratum \(S_r\). 	
	Moreover, define
	\(
	O_m:=\{\Sigma\in\Cov(n):V(\Sigma)<m\}.
	\)
	Each \(O_m\) is open in \(\Cov(n)\), while, in view of the fact that \(\|\Sigma\|_F \le \Tr(\Sigma)\), \(C_m:= \overline O_m=\{\Sigma\in\Cov(n):V(\Sigma) \le m\}\) is closed and bounded in $\Sym(n)$ and hence compact. Consequently, it is compact in $\Cov(n)$.
	Therefore
	\((O_m)_{m\ge2}\) is an open relatively compact exhaustion of
	\(\Cov(n)\).
	Finally, let
	\(
	T_m:=\inf\{t\ge0:\Sigma_t\notin O_m\} \text{ and }
	\Sigma_t^{(m)}:=\Sigma_{t\wedge T_m}.
	\)

	\begin{proposition}
		\label{prop:cov-lyapunov-drift}
		The function
		\(V\)
		belongs to \(D(A^*_m)\) for all $m \ge 2$. Moreover, there exist
		constants \(\lambda>0\), \(b<\infty\), and \(k<\infty\) such that
		one has, for $k \ge b/\lambda$,
		\[
		A^*_mV(\Sigma)
		\le
		-\lambda V(\Sigma)+b\mathbf 1_{C_k}(\Sigma),
		\qquad
		\Sigma\in O_m.
		\]
	\end{proposition}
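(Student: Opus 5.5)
The plan is to verify Assumption~\assref{ass:Lyapunov} through the stratified scheme of Remark~\ref{rem:stratified-CD2-verification}. We compute a formal drift $G(\Sigma)=L_r V_r(\Sigma)+JV(\Sigma)$ for $\Sigma\in S_r$, prove a uniform bound of the form $G\le -\lambda' V+b'$, and then justify $V\in D(A_m^\ast)$ with $A_m^\ast V=G$ on $O_m$. The constants will be chosen at the end so that $-\lambda' V+b'\le -\lambda V+b\mathbf 1_{C_k}$.

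\textbf{Step 1: intra-stratum drift.} For $r\ge1$, since $Q^\top Q=I_r$, we have $V\circ\phi_r\circ q_r(Q,H)=1+\Tr(Q e^H Q^\top)=1+\Tr(e^H)$. This expression does not depend on $Q$, so the horizontal term $\tfrac12\Delta_Q^{\hor}$ contributes nothing. By the coordinate formula of Proposition~\ref{prop:cov-intrastratum-diffusions}, extended to this smooth but non-compactly supported function, the drift reduces to
\[
L_rV=-\theta_r\Tr(He^H)+\tfrac{\sigma_r^2}{2}\Delta_H\Tr(e^H).
\]
For the Laplacian we use the Daleckii--Krein formula in an eigenbasis of $H$ with eigenvalues $\mu_1,\dots,\mu_r$. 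It expresses the second derivative of $\Tr e^H$ through divided differences $(e^{\mu_i}-e^{\mu_j})/(\mu_i-\mu_j)$, each of which is at most $\max(e^{\mu_i},e^{\mu_j})$. This gives $\Delta_H\Tr e^H\le c_r\Tr e^H$. Hence
\[
L_rV\le\sum_i\bigl(-\theta_r\mu_ie^{\mu_i}+\tfrac{\sigma_r^2 c_r}{2}e^{\mu_i}\bigr).
\]
For a fixed $\lambda'>0$, the scalar function $\mu\mapsto -\theta_r\mu e^\mu+(\tfrac{\sigma_r^2c_r}{2}+\lambda')e^\mu$ is bounded above on $\R$: it tends to $-\infty$ as $\mu\to\infty$, and it is bounded as $\mu\to-\infty$ because $\mu e^\mu\ge -1/e$. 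Therefore $L_rV\le -\lambda'(V-1)+b_r$. For $r=0$ we simply have $L_0V=0$.

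\textbf{Step 2: jump drift.} Since the kernels are refreshments onto the compact sets $A_s$, on which $V\le 1+s\overline\ell$, we get
\[
JV(\Sigma)=\lambda_r^+\Bigl(\int V\,dR_{r+1}-V(\Sigma)\Bigr)+\lambda_r^-\Bigl(\int V\,dR_{r-1}-V(\Sigma)\Bigr)\le \overline\Lambda(1+n\overline\ell).
\]
The $-V$ terms only help, so they are dropped. Combining with Step 1 gives $G\le-\lambda'V+b'$ on all of $\Cov(n)$; for $r=0$ this holds trivially because $V=1$ there. Now set $\lambda=\lambda'/2$, $b=b'$ and $k\ge 2b'/\lambda'$. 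On $C_k^\complement$ we have $V>k$, so $-\lambda'V+b'\le-\lambda V$; on $C_k$ we have $-\lambda'V+b'\le -\lambda V+b$. This is the claimed inequality.

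\textbf{Step 3: domain membership (main obstacle).} The difficulty is that $O_m\cap S_r$ is not relatively compact in $S_r$: eigenvalues of $\Sigma$ may tend to $0$, i.e.\ $\mu_i\to-\infty$. So we cannot simply approximate $V$ by elements of the core $C_c^\infty(S_r)$ and invoke Dynkin's formula. Instead we apply Itô's formula directly to $\Tr e^{H_t}$ along the Ornstein--Uhlenbeck SDE \eqref{eq:h-sde}. The finite-variation part is $\int L_rV$. The martingale part is $\sigma_r\int\langle e^{H_s},dW_s\rangle_F$, whose integrand is bounded by $\|e^{H}\|_F\le\Tr e^H<m$ before the exit time from $O_m$, so the stopped stochastic integral is a true martingale.

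Moreover, $|L_rV|$ is bounded on $O_m$. Indeed, there each $\mu_i\le\log m$, so both $\mu e^\mu$ and $e^\mu$ are bounded. Likewise $\sup_{O_m}\int|V(\Sigma)-V(\Xi)|\,\nu(\Sigma,d\Xi)\le\overline\Lambda(m+1+n\overline\ell)$. These are exactly the local boundedness and stratumwise Dynkin conditions of Remark~\ref{rem:stratified-CD2-verification}. Adding the compensated jump sum from the proof of Theorem~\ref{thm:existence} (stopped at $T_m$) then yields $V\in D(A_m^\ast)$ with $A_m^\ast V=G$ on $O_m$, with the same function $G$ for all $m$. This last point also supplies the extra hypothesis of Corollary~\ref{cor:geometric-ergodicity}.
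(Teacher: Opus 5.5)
Your proposal is correct and follows essentially the same route as the paper. Both arguments reduce $V$ to $1+\Tr e^H$ in associated-bundle coordinates, bound $\Delta_H\Tr e^H\le r\Tr e^H$ via divided differences, use that $\mu\mapsto(-\theta_r\mu+c)e^\mu$ is bounded above, bound $JV$ by $\overline\Lambda(1+n\overline\ell)$, halve $\lambda$ to absorb $b$ off $C_k$, and get $V\in D(A_m^\ast)$ from It\^o's formula plus the compensated jump sum stopped at $T_m$. Your Step 3 argument is a slightly cleaner way to obtain the true-martingale property than the paper's appeal to $L^1$-boundedness of the local martingale: the stopped stochastic integral $\sigma_r\int\langle e^{H_s},dW_s\rangle_F$ has a bounded integrand before $T_m$, so it is a true martingale.
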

	
	The proof of Proposition~\ref{prop:cov-lyapunov-drift} uses the strategy outlined in Remark \ref{rem:stratified-CD2-verification}. In associated-bundle coordinates, $V$ is a function only of $H$, while independent of $Q$. The Lyapunov estimate is, then, driven by the Ornstein--Uhlenbeck drift in the $H$-component: for large positive eigenvalues, the term \(-\theta_r H_t\) dominates the diffusion contribution and forces a negative drift for \(V(\Sigma)=1+\Tr(\Sigma)\). The jump contribution is harmless for this estimate because the post-jump distributions are supported in fixed compact spectral regions. Details are given in Appendix~\ref{appsec:cov-application}.

	\begin{proposition}
		\label{prop:CR-petite}
		Fix \(h>0\), \(k >0\) and
		\(t_0>0\), and set
		\(
		D:=2nh+t_0 .
		\)
		Then there exist \(\varepsilon>0\) and a non-zero finite measure
		\(\rho\) such that, for \(\Sigma\in C_k \text{ and }
		B\in\cl B(\Cov(n))\), 
		\(
		P_{D}(\Sigma,B)
		\ge
		\varepsilon\rho(B).
		\)
		In particular \(C_k\) is petite.
	\end{proposition}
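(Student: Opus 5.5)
We will prove the minorization by forcing a specific sequence of jumps within time \(D=2nh+t_0\), uniformly over starting points in \(C_k\). Since every \(C_k\) contains matrices of every rank, the natural target is a fixed stratum reached by refreshment. We aim to push every starting point to rank \(0\) and then up to rank \(n\).

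The key observation is that the refreshment kernels do not depend on the pre-jump state. Starting from \(\Sigma\in S_r\), consider the event that the process makes exactly \(r\) downward jumps \(r\to r-1\to\cdots\to 0\) during \([0,nh]\) and no other jumps. It then sits at \(0\) (where \(L_0=0\)). During the following window it makes exactly \(n\) upward jumps \(0\to1\to\cdots\to n\) in \([nh,2nh]\). Finally it makes no jump during \([2nh,D]\).

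Because the jump rates are constant on each stratum, the interlacing construction of Theorem~\ref{thm:existence} lets us bound the probability of any prescribed jump pattern in given time windows from below. The bound uses the Poisson thinning with the constants \(c_r^\pm\) and the total rate bound \(\overline\Lambda\), and it holds uniformly in \(\Sigma\) and in the intra-stratum paths, since rates do not depend on position. For example, the probability of exactly the prescribed \(r\) downward jumps in \([0,nh]\) and none in a given subinterval is at least a constant \(p_1>0\) depending only on \(n,h,c^\pm,\overline\Lambda\). Here we use the condition \(r\le n\) and the fact that each step has probability at least \(\min_r c_r^-/\overline\Lambda\) of being the downward edge.

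Conditioning at the last upward jump time \(\tau\in[nh,2nh]\), the state is distributed as \(R_n\) independently of the past, by the strong Markov property at jump times (Theorem~\ref{thm:strong-markov}). On the remaining interval of length \(s=D-\tau\in[t_0,nh+t_0]\) we require no jump, which has probability at least \(e^{-\overline\Lambda(nh+t_0)}\). Thus
\[
P_D(\Sigma,B)\ge p\int_{A_n} R_n(d\Xi)\,\inf_{s\in[t_0,t_0+nh]}P^n_s(\Xi,B)
\]
for a constant \(p>0\), where \(P^n_s\) is the fixed-rank diffusion on \(S_n\).

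It remains to minorize \(\int R_n(d\Xi)P^n_s(\Xi,\cdot)\) uniformly over \(s\in[t_0,t_0+nh]\), which is the main obstacle. On \(S_n\) (the positive definite cone, \(M_n\cong O(n)\times_{O(n)}\Sym(n)\cong\Sym(n)\) via \(\log\)), the diffusion is simply \(\exp(H_t)\) with \(H_t\) an Ornstein--Uhlenbeck process, since the horizontal Stiefel component is trivial for \(r=n\). The OU transition density is an explicit Gaussian that is continuous and strictly positive, jointly in \((s,H_0,H)\), for \(s>0\). On the compact set \(\log(A_n)\times[t_0,t_0+nh]\times\overline{U}\), with \(U\) a fixed bounded open set in \(\Sym(n)\), this density is bounded below by some \(\delta>0\). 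Taking \(\rho\) as normalized Lebesgue measure on \(U\), pushed forward through \(\exp\), gives \(P_D(\Sigma,B)\ge\varepsilon\rho(B)\) with \(\varepsilon=p\,\delta\,\mathrm{Leb}(U)\). The resulting \(\rho\) is nonzero and finite, so \(C_k\) is petite with \(a=\delta_D\).
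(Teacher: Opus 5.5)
Your proposal is correct and follows essentially the same route as the paper. Both force the rank path $r\to\cdots\to0\to\cdots\to n$ inside $[0,2nh]$ using the constant, position-independent rates, land in $A_n$ through the state-independent kernel $R_n$, forbid jumps on the remaining interval of length at least $t_0$, and minorize by a uniform positive lower bound on the top-stratum (Ornstein--Uhlenbeck in logarithmic coordinates) transition density over a compact set of starting points, times and targets. The differences are cosmetic: you use two blocks of length $nh$ instead of $2n$ slots of length $h$, and you take Lebesgue measure on a bounded set in logarithmic coordinates as the minorizing measure instead of $\vol_n$ restricted to a compact $K\subset A_n$.
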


	\begin{proof}
		Choose a compact set
		\begin{equation}
			\label{eq:K-def}
			K\Subset A_n\Subset S_n
		\end{equation}
		with \(\vol_n(K)>0\). The top stratum \(S_n\) is identified with
		\(\Sym(n)\) through \(H\mapsto \exp(H)\). Hence the top-rank diffusion is a
		non-degenerate Ornstein--Uhlenbeck diffusion in logarithmic coordinates. Its
		transition density \(p^{(n)}_t(\Xi,\Upsilon)\), with respect to \(\vol_n\), is smooth
		and strictly positive for every \(t>0\).
		Define the finite nonzero measure
		\(
		\rho(B):=\vol_n(B\cap K).
		\)
		
		Fix an initial point \(\Sigma\in C_k\cap S_r\). We force the rank path
		\[
		r\to r-1\to\cdots\to0\to1\to\cdots\to n.
		\]
		This path has
		\(
		N_r:=r+n\le 2n
		\)
		jumps. 
		Define
		\(
		a_r
		:=
		(\prod_{k=1}^r c_k^-)
		(\prod_{k=0}^{n-1} c_k^+),
		\)
		where the empty product is interpreted as \(1\). Since all the involved rates are
		strictly positive, \(a_r\) is strictly positive as well.
		
		We divide the time interval \([0,2nh]\) into \(2n\) subintervals of length \(h\).
		On the first \(N_r\) subintervals, require that exactly one jump occurs, that this jump
		is the prescribed one in the above rank path, and that no other jump occurs in that
		subinterval. On the remaining time interval, require that no further rank-changing
		jump occurs. Since all total jump rates are bounded by \(\overline\Lambda\), the
		probability of this event is bounded from below by
		\(
		a_r h^{N_r} e^{-\overline\Lambda D}.
		\)
		
		After the last prescribed upward jump, the process lies in \(S_n\), and the landing law
		is \(R_n\), which is supported on \(A_n\). From that last jump time until time
		\(D\), no further jump occurs on the event just described. The remaining
		top-stratum diffusion time belongs to the interval \([t_0,t_0+2nh]\). Therefore, defining
		\[
		\delta_{h,t_0,K}
		:=
		\inf_{\substack{
				s\in[t_0,t_0+2nh]\\
				\Xi\in A_n,\ \Upsilon\in K}}
		p^{(n)}_s(\Xi,\Upsilon)
		>0,
		\]
		we get
		\(
		P_{D}(\Sigma,B)
		\ge
		a_r h^{N_r} e^{-\overline\Lambda D}
		\delta_{h,t_0,K}\,
		\rho(B).
		\)
		Taking the minimum over the finitely many initial ranks gives the desired inequality with
		\(
		\varepsilon
		:=
		\delta_{h,t_0,K}
		e^{-\overline\Lambda D}
		\min_{0\le r\le n}
		[
		h^{N_r}
		a_r
		]
		>0 .
		\)
		Since a one-time minorization is a petite-set minorization with \(a=\delta_{D}\),
		the set \(C_k\) is petite.
	\end{proof}

	By the Lyapunov drift condition and the petite-set property, the process $\Sigma_t$ is
	positive Harris recurrent and admits a unique invariant probability measure
	\(\pi\). In particular, it is \(\pi\)-irreducible.
	
	\begin{proposition}
		\label{prop:cov-aperiodicity}
		The \(D\)-skeleton is strongly aperiodic in the
		\(\pi\)-irreducible sense. Consequently the continuous-time rank-switching
		covariance process is aperiodic.
	\end{proposition}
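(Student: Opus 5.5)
The plan is to check the paper's definition of aperiodicity directly: find $D,\varepsilon>0$, a compact set $K$ with $\pi(K)>0$ and a probability measure $\rho$ with $\rho(K)>0$ such that $P_D(x,\cdot)\ge\varepsilon\rho(\cdot)$ for all $x\in K$. Proposition~\ref{prop:CR-petite} already gives such a minorization with the finite measure $\rho(B)=\vol_n(B\cap K_0)$. Here $K_0\Subset A_n\Subset S_n$ is the compact set of \eqref{eq:K-def}, and the bound holds uniformly over the compact sets $C_k$.

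I fix $k$ large enough that $K_0\subset C_k$; this is possible because $K_0$ is bounded. I take $K:=C_k$ and the normalized measure $\bar\rho:=\rho/\vol_n(K_0)$, with constant $\varepsilon\,\vol_n(K_0)$. Then $\bar\rho(K)=\bar\rho(K_0)=1>0$, and Proposition~\ref{prop:CR-petite} gives $P_D(x,\cdot)\ge\varepsilon\,\vol_n(K_0)\,\bar\rho(\cdot)$ for every $x\in K$. This is exactly the strong aperiodicity of the $D$-skeleton on $K$.

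It remains to show $\pi(K)>0$. I will argue via invariance. Since $\pi$ is a probability measure, $\pi(C_k)\to1$ as $k\to\infty$ because $(C_k)$ exhausts $\Cov(n)$, so $\pi(C_k)>0$ for $k$ large. This also follows from $\int V\,d\pi<\infty$ (Theorem~\ref{thm:harris}(iii)) together with Markov's inequality. Invariance then gives
\[
\pi(K_0)=\int\pi(dx)P_D(x,K_0)\ge\int_{C_k}\pi(dx)\,\varepsilon\,\vol_n(K_0)\,\bar\rho(K_0)>0,
\]
so that even $\pi(K_0)>0$, and hence $\pi(K)>0$. The same computation shows that $\bar\rho\ll\pi$ restricted to $K_0$ is charged, which is consistent with the $\pi$-irreducibility from Theorem~\ref{thm:harris}(ii). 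I take $\psi=\pi$ in the definition, so aperiodicity holds in the $\pi$-irreducible sense.

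The main obstacle is the compatibility of the minorizing measure with the reference set, namely ensuring $\rho(K)>0$ and $\pi(K)>0$ simultaneously. This is handled by enlarging $k$ so that $K_0\subset C_k$ and by using invariance as above. A secondary point is whether the paper's notion requires the strong aperiodicity of a skeleton to hold with $\nu(K)>0$ for a $\psi$-small set. The argument shows $K$ is $\nu_D$-small with $\nu_D(K)>0$, which is the Meyn--Tweedie criterion for strong aperiodicity of the $D$-skeleton. The continuous-time aperiodicity needed in Corollary~\ref{cor:geometric-ergodicity} then follows by definition.
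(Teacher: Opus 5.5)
Your proposal is correct and takes essentially the same route as the paper. You choose $K=C_k$ with $k$ large enough that $K_0\subset C_k$ and $\pi(C_k)>0$, and reuse the minorization of Proposition~\ref{prop:CR-petite}, whose minorizing measure charges $K_0\subset C_k$. Normalizing $\rho$ to a probability measure, which the definition formally requires and the paper skips, and the invariance argument giving $\pi(K_0)>0$ are harmless extras; the latter is not needed, since $\pi(C_k)>0$ already suffices.
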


	\begin{proof}
	Define the compact set $K$ as in \eqref{eq:K-def}.
	Choose \(k\) large enough so that
	\(
	\pi(C_k)>0
	\)
	and
	\(
	K\subset C_k.
	\)
	The minorizing measure \(\rho\) can be chosen to charge \(K\), as shown in the proof of Proposition~\ref{prop:CR-petite}. Therefore
	\(
	\rho(C_k)\ge \rho(K)>0.
	\)
	Thus, for the \(D\)-skeleton, for all $\Sigma \in C_k$
	\(
	P_{D}(\Sigma,\cdot)
	\ge
	\varepsilon\rho(\cdot),
	\)
	with
	\(
	\pi(C_k)>0 \text{ and }
	\rho(C_k)>0.
	\)
\end{proof}

	We have therefore verified Assumption \assref{ass:Lyapunov}, as well
	as aperiodicity. Hence the process $\Sigma_t$ is positive Harris recurrent and 
	\(V\)-uniformly geometrically ergodic with
	\(
	V(\Sigma)=1+\Tr(\Sigma).
	\)
	
	\appendix
	
	\section{Interlacing construction}
	\label{appsec:interlacing}
	Here we present the technical parts needed for the interlacing construction and Theorem~\ref{thm:existence}.
	
	\begin{lemma}
		\label{lem:s-j-meas}
		The maps $\sigma:H \to (0 ,+\infty]$ and $\Psi:H \to E$ are measurable.
	\end{lemma}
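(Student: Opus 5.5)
We prove measurability of $\sigma$ by writing $\{\sigma<t\}$ as a countable combination of measurable events. The plan is to approximate the accepted-atom condition from above and below along rational times, using the càdlàg structure of $\eta$ and the local finiteness of $m$.

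\textbf{Step 1: the maps $(\eta,r)\mapsto \eta(r-)$ and $(\eta,r)\mapsto\Lambda(\eta(r-))$ are jointly measurable.} First, $(\eta,r)\mapsto\eta(r)$ is jointly Borel on $D_E\times[0,\infty)$, since it is right-continuous in $r$ and each evaluation $\eta\mapsto\eta(r)$ is Borel for the Skorokhod $\sigma$-field. Next, $\eta(r-)=\lim_k\eta((r-1/k)\vee 0)$ is a pointwise limit of jointly measurable maps, with values in a metric space. It is therefore jointly measurable. Composing with the Borel map $\Lambda$, which is a countable sum of Borel $\lambda_e$ and is bounded by $\overline\Lambda$, gives a jointly measurable rate process $\ell(\eta,r):=\Lambda(\eta(r-))$.

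\textbf{Step 2: express $\sigma$ through the atoms of $m$.} Since $N_Z$ is a standard Borel space of locally finite counting measures, a standard result (e.g.\ in \cite{LastPenrose2017}) gives measurable enumerations $m=\sum_k\delta_{(t_k(m),s_k(m),u_k(m))}$. We order the atoms by increasing $t_k$, which is possible because $m([0,T]\times(0,\overline\Lambda]\times(0,1))<\infty$. Then
\[
\sigma(h)=\inf_k\bigl\{\,t_k(m)\ :\ s_k(m)\le \ell(\eta,t_k(m))\,\bigr\}.
\]
Each event $\{s_k\le \ell(\eta,t_k)\}$ is measurable by Step 1 composed with the measurable maps $h\mapsto(\eta,t_k(m))$. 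Hence $\sigma$ is an infimum of countably many measurable $(0,\infty]$-valued functions, and so it is measurable. Positivity of $\sigma$ holds because $Z$ excludes $t=0$ and atoms do not accumulate.

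\textbf{Step 3: measurability of $\Psi$.} The index of the first accepted atom, $\kappa(h):=\min\{k: s_k\le\ell(\eta,t_k)\}$, is measurable, and so are the marks $(s,u)=(s_\kappa,u_\kappa)$. Uniqueness of the first accepted atom is the measurable event $\{t_{\kappa+1}\neq t_\kappa\}$, intersected with $\{\sigma<\infty\}$. On this event we have $\eta(\sigma-)$ measurable by Step 1. The edge selector $e(x,s)$ takes countably many values, and each set $\{e(x,s)=e_{\alpha,n}\}=\{x\in S_\alpha,\ \Gamma_{\alpha,n-1}(x)<s\le\Gamma_{\alpha,n}(x)\}$ is Borel. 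Therefore
\[
\Psi(h)=\sum_{\alpha,n}\mathbf 1_{\{e(\eta(\sigma-),s)=e_{\alpha,n}\}}\,F_{e_{\alpha,n}}(\eta(\sigma-),u)
\]
is a countable measurable gluing of Borel maps, extended by the constant $x_\ast$ on the complement. It is thus measurable.

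\textbf{Main obstacle.} The delicate point is Step 2: producing a measurable time-ordered enumeration of the atoms of $m$. I would first attempt this via the measurable-enumeration lemma for simple locally finite point processes. If that proves awkward, the fallback is to avoid enumeration altogether. In that approach we write $\{\sigma\le t\}$ as a limit over dyadic partitions of
\[
\bigcup_{j}\bigl\{m\bigl(\{(r,s,u): r\in I_j,\ s\le \ell(\eta,r)\}\bigr)>0\bigr\}.
\]
Measurability of $(\eta,m)\mapsto m(\{(r,s,u): s\le\ell(\eta,r)\}\cap\cdot)$ then follows from the joint measurability of $\ell$ together with a monotone class argument on integrals $\int g(\eta,z)\,m(dz)$.
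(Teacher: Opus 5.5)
Your overall route is the paper's own. Both arguments get joint measurability of $(\eta,r)\mapsto\eta(r-)$ by approximating from the left. Both use a measurable enumeration of the atoms of $m$ and write $\sigma=\inf_k T_k$, where $T_k=t_k$ on accepted atoms and $+\infty$ otherwise. Both then select the marks measurably and compose with the Borel edge selector and the maps $F_e$. Step 1 and your formula for $\sigma$ in Step 2 are fine, and neither needs any ordering of the atoms.

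The step that fails is the time-ordered enumeration, and your $\kappa$ and your uniqueness event depend on it.

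\emph{Why the ordering is not available.} Local finiteness on $Z$ only gives $m(B)<\infty$ for $B$ relatively compact in $Z$. Since $Z$ omits $t=0$ and $s=0$, the set $(0,T]\times(0,\overline\Lambda]\times(0,1)$ is not relatively compact, and its $m$-mass is finite only $\bb M$-a.s., not for every $m\in N_Z$. For example, $m=\sum_k\delta_{(1/k,s_0,1/2)}$ with $s_0\in(0,\overline\Lambda]$ lies in $N_Z$ and has no earliest atom.

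\emph{Positivity.} Pair that $m$ with a constant path at a point $x$ with $\Lambda(x)\ge s_0$. Every atom is accepted and $\sigma(h)=0$, so your claim that $\sigma>0$ is false pointwise. The paper does not prove positivity here either; it shows in Lemma~\ref{lem:bad-sets} that $\{\sigma(h_n)=0\}$ is null, so the codomain should be read as $[0,\infty]$.

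\emph{Ties and the uniqueness event.} Even where an ordering exists, ties need a measurable tie-breaking rule, which you name as the main obstacle but never supply. Also, $\{t_{\kappa+1}\neq t_\kappa\}$ is not the uniqueness event. If a rejected atom with time $t_\kappa$ is listed right after $\kappa$, the first accepted atom is unique but $t_{\kappa+1}=t_\kappa$. Your gluing therefore produces a map that differs from $\Psi$ on such configurations.

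\emph{The repair.} This is exactly what the paper does, and it needs no ordering. The set $U:=\{\sigma<\infty\}\cap\{\#\{k:T_k=\sigma\}=1\}$ is measurable. On $U$, let $\kappa$ be the unique index with $T_\kappa=\sigma$; it is measurable because $\{\kappa=k\}=U\cap\{T_k=\sigma\}$. Put $\Psi=x_\ast$ on $U^\complement$. Your Step 3 then goes through verbatim.

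Your fallback is a valid enumeration-free route for $\sigma$, and it is simpler than you state it. Directly, $\{\sigma<t\}=\{m(\{(r,s,u):r<t,\ s\le\ell(\eta,r)\})\ge1\}$, with no dyadic limit needed.
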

	\begin{proof}
		We start by showing that the map $D_E\times (0,+\infty)\ni(\eta,r) \mapsto \eta(r-)$ is Borel. Indeed, defining
		\(
		q_n(r):=2^{-n}(\lceil 2^n r\rceil-1)\vee 0, 
		\)
		we have \(q_n(r)<r\) and \(q_n(r)\to r\). Since \(\eta\) is càdlàg,
		\(
		\eta(q_n(r))\to \eta(r-).
		\)
		For fixed \(n\), the map \(q_n\) is a countably-valued Borel function. Hence, for every
		open \(O\subset E\),
		\[
		\{(\eta,r):\eta(q_n(r))\in O\}
		=
		\bigcup_{k\ge0}
		\{r:q_n(r)=k2^{-n}\}
		\times
		\{\eta:\eta(k2^{-n})\in O\},
		\]
		which is Borel because fixed-time evaluations on \(D_E\) are Borel. Thus
		\((\eta,r)\mapsto \eta(q_n(r))\) is Borel for every \(n\). Since \(E\) is metric and
		\(
		\eta(r-)=\lim_{n\to\infty}\eta(q_n(r)),
		\)
		the map \((\eta,r)\mapsto \eta(r-)\) is Borel.

		Since \(Z\) is locally compact and separable, the space \(N_Z\) of locally
		finite counting measures admits measurable enumerations of atoms (see \cite[Lem. 9.1.XIII]{daley2008introductionII}). Let
		\(a_i(m)=(r_i(m),s_i(m),u_i(m))\), \(i\geq1\), be such an enumeration, with a
		cemetery value $\partial$ for absent atoms. Define
		\[
		T_i(h):=
		\begin{cases}
			r_i(m), & a_i(m)\neq\partial,\ 
			0<s_i(m)\leq \Lambda(\eta(r_i(m)-)),\\
			+\infty, & \text{otherwise}.
		\end{cases}
		\]
		Since \((\eta,r)\mapsto \eta(r-)\) is Borel and \(\Lambda\) is Borel, each \(T_i\)
		is measurable. Hence
		\(
		\sigma(h)=\inf_{i\geq1}T_i(h)
		\)
		is measurable. 
		
		On the set where \(0<\sigma(h)<\infty\) and the first accepted atom is unique, let
		\(I(h)\) be the unique index such that
		\(T_I(h)=\sigma(h)\). Then \(I\), and therefore the marks
		\(
		S(h):=s_I(m)\text{ and } U(h):=u_I(m),
		\)
		are measurable. Since the edge selector \((z,s)\mapsto e(z,s)\) is Borel and the maps
		\(F_e\) are Borel, the landing map \((z,s,u) \mapsto F_{e(z,s)}(z,u)\) is Borel. Thus, on the good set,
		\(
		\Psi(h)
		=
		F_{e(\eta(\sigma(h)-),S(h))}(\eta(\sigma(h)-),U(h))
		\)
		is measurable. On the complementary set, which is Borel, we set
		\(\Psi(h)=x_\ast\). Hence \(\Psi\) is measurable and the claim is proven.
	\end{proof}

	\begin{lemma}
		\label{lem:Chi-borel}
		The map \(\Chi: E \times J \to D_E\) is measurable.
	\end{lemma}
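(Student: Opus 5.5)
The plan is to exploit the fact that the Borel $\sigma$-field of $D_E$ is generated by the coordinate projections $\omega\mapsto\omega(t)$, $t\ge0$ (standard for $E$ separable metric, Ethier--Kurtz Prop.~III.7.1). So it suffices to show, for each fixed $t\ge0$, that $(x,j)\mapsto \Chi(x,j)(t)\in E$ is Borel. Since $\Chi$ is defined piecewise, we also need $B_{\mathrm{good}}$ to be a measurable subset of $E\times J$.

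\emph{Step 1: measurability of the skeleton variables.} By Lemma~\ref{lem:s-j-meas}, $\sigma$ and $\Psi$ are measurable on $H$. The coordinate maps $j\mapsto h_n$ are measurable on $(J,\cl J)$, so each $\sigma_{n+1}(j)=\sigma(h_n)$ is measurable. By induction, $\tau_n(j)=\sum_{k\le n}\sigma_k(j)$ is measurable, and so is $x_n(x,j)$: we have $x_0(x,j)=x$ and $x_{n+1}=\Psi(h_n)$, the latter not depending on $x$ at all.

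\emph{Step 2: $B_{\mathrm{good}}$ is measurable.} The set is a countable intersection of three kinds of conditions:
\begin{itemize}
\item $\{\tau_n=\infty\}\cup\{\eta_n(0)=x_n(x,j)\}$, which is measurable because $\eta\mapsto\eta(0)$ is Borel and the diagonal of $E\times E$ is closed;
\item $\{\tau_n=\infty\}\cup\{\sigma(h_n)>0\}$;
\item $\{\tau_n\to\infty\}=\bigcap_{N}\bigcup_n\{\tau_n>N\}$.
\end{itemize}
Each is measurable by Step 1.

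\emph{Step 3: coordinates of $\Chi$.} On $B_{\mathrm{good}}$, for fixed $t$,
\[
\Chi(x,j)(t)=\sum_{n\ge0}\one_{\{\tau_n(j)=t\}}\,x_n(x,j)+\one_{\{\tau_n(j)<t<\tau_{n+1}(j)\}}\,\eta_n\bigl(t-\tau_n(j)\bigr),
\]
understood as a selection over countably many disjoint measurable events. For the second term we need the map $(\eta,s)\mapsto\eta(s)$ from $D_E\times[0,\infty)$ to $E$ to be jointly Borel. This follows as in the proof of Lemma~\ref{lem:s-j-meas}, using right-approximations $q_n^+(s):=2^{-n}\lceil 2^n s\rceil$ together with right-continuity. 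Composing with the measurable map $j\mapsto(\eta_n,t-\tau_n(j))$ on $\{\tau_n<t\}$ gives measurability. Hence, for every Borel $O\subset E$, the preimage of $\{\omega:\omega(t)\in O\}$ is a countable union of measurable pieces intersected with $B_{\mathrm{good}}$. On the complement, $\Chi$ is the constant path $x$, which is measurable in $x$.

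The main subtlety is checking that $\Chi(x,j)$ is genuinely an element of $D_E$ on $B_{\mathrm{good}}$, so that $\Chi$ indeed maps into $D_E$. Right-continuity at each $\tau_n$ uses $\eta_n(0)=x_n$ and càdlàg-ness of $\eta_n$. Left limits at $\tau_{n+1}$ exist because $\eta_n$ is càdlàg. The condition $\tau_n\to\infty$ ensures every $t$ falls in some interval. With this and Steps 1--3, the Borel $\sigma$-field generated by projections yields measurability of $\Chi$.
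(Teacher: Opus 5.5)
Your proposal is correct and follows essentially the same route as the paper's proof. Both arguments reduce to the coordinate maps \(\Chi_t\), establish measurability of \(\sigma_{n+1},\tau_n,x_n\), describe \(B_{\mathrm{good}}\) by the same three conditions, check the càdlàg property on the good set, and glue \(\Chi_t\) over the countable partition \(B_{\mathrm{good}}^\complement\), \(\{\tau_n=t\}\), \(\{\tau_n<t<\tau_{n+1}\}\) using jointly Borel variable-time evaluation obtained from right dyadic approximations. One point you leave implicit is that disjointness of these pieces, and well-definedness of the path at the concatenation times, relies on the condition \(\sigma(h_n)>0\) in \(B_{\mathrm{good}}\); the paper invokes this explicitly to get strictly increasing finite jump times.
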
 
	\begin{proof}
					
			Since \(E\) is a separable
			metric space, the Borel \(\sigma\)-field on \(D_E\) is
			generated by the coordinate maps
			\(
			\pi_t:D_E\to E,\  \pi_t(\omega)=\omega(t)\text{ for }t\ge 0.
			\)
			Consequently, it suffices to prove that
			\(
			\pi_t\circ \Chi=\Chi_t: E \times J \to E
			\)
			is measurable for every fixed \(t\ge0\).
			We shall use the fact that the variable-time evaluation map
			\(
			(\eta,r) \to \eta(r),
			\)
			is Borel. 
			This can be proved using the same strategy we used for the map $(\eta,r)\to \eta(r-)$ substituting left approximation with right approximation, namely \(p_k(r):=2^{-k}\lceil 2^k r\rceil\)

			We now turn to the quantities entering the definition of
			\(\cl X\). The coordinate maps
			\(
			J\ni j\mapsto h_n\in H
			\)
			are measurable by definition of the product \(\sigma\)-field on
			\(J=H^{\mathbb N_0}\). By Lemma~\ref{lem:s-j-meas}, both
			\(
			\sigma:H\to(0,\infty]\) and
			\(\Psi:H\to E
			\)
			are measurable. Therefore, for every \(n\ge0\),
			\(
			\sigma_{n+1}(j)=\sigma(h_n)
			\)
			is measurable. Hence, the recursively
			defined jump times
			are measurable maps
			\(
			\tau_n:J\to[0,\infty].
			\)
			Moreover, every map
			\(
			(x,j)\mapsto x_n(x,j)
			\)
			is measurable.
			
			Next we prove that the good set \(B_{\mathrm{good}}\) is measurable. 
			For fixed \(n\), the map
			\(
			(x,j)\mapsto \eta_n(0)
			\)
			is measurable, because it is the composition of the coordinate projection
			\(j\mapsto h_n\), the projection \(h_n\mapsto \eta_n\), and
			the fixed-time evaluation \(\eta_n\mapsto \eta_n(0)\). Since \(E\) is
			metric, the diagonal
			\(
			\{(z,z):z\in E\}
			\)
			is closed in \(E\times E\), hence Borel. Thus
			\(
			\{(x,j):\eta_n(0)=x_n(x,j)\}
			\)
			is measurable. Therefore
			\(
			\{(x,j):\tau_n(j)=\infty\}
			\cup
			\{(x,j):\eta_n(0)=x_n(x,j)\}
			\)
			is measurable as well. Taking the countable intersection over \(n\), we obtain the
			measurability of the first defining condition of
			\(B_{\mathrm{good}}\).
			
			The second condition is Borel because
			\(
			\{(x,j):\tau_n(j)=\infty\}
			\cup
			\{(x,j):\sigma(h_n)>0\}
			\)
			is measurable for every \(n\), and we again take a countable intersection.			
			Finally,
			\[
			\{\tau_n(j)\to\infty\}
			=
			\bigcap_{M\in\mathbb N}
			\bigcup_{n\ge0}
			\{j:\tau_n(j)>M\},
			\]
			because the sequence \((\tau_n)_{n\ge0}\) is nondecreasing. The right-hand
			side is measurable. Hence \(B_{\mathrm{good}}\in
			\cl B(E)\otimes\cl J\).
			
			We now show that, on \(B_{\mathrm{good}}\), the formula defining
			\(\cl X\) really gives an element of \(D_E\). Fix
			\((x,j)\in B_{\mathrm{good}}\). Since \(\tau_n(j)\to\infty\), every
			compact time interval \([0,T]\) intersects only finitely many intervals
			\(
			[\tau_n(j),\tau_{n+1}(j)).
			\)
			Moreover, by the condition \(\sigma(h_n)>0\) whenever \(\tau_n(j)<\infty\),
			the finite jump times are strictly increasing. On each open interval
			\(
			(\tau_n(j),\tau_{n+1}(j))
			\)
			the path is given by
			\(
			t\mapsto \eta_n(t-\tau_n(j)),
			\)
			and is therefore càdlàg.
			
			At
			concatenation times $\tau_n(j), \, n \ge 0$, one has
			\(
			\cl X(x,j)(\tau_n(j))=\eta_n(0),
			\)
			so right-continuity follows from right-continuity of \(\eta_n\).
			The
			left limit at \(\tau_n(j)\) is
			\[
			\lim_{t\to \tau_n(j)}
			\cl X(x,j)(t)
			=
			\lim_{s\to \sigma_n(j)}
			\eta_{n-1}(s)
			=
			\eta_{n-1}(\sigma_n(j)-),
			\]
			which exists because \(\eta_{n-1}\) is càdlàg. 
			Thus \(\cl X(x,j)\in D_E\) on \(B_{\mathrm{good}}\). On
			\((E\times J)\setminus B_{\mathrm{good}}\) we define
			\(\cl X(x,j)\equiv x\),
			which is again an element of \(D_E\). Therefore \(\cl X\) is a
			well-defined map from \(E\times J\) to \(D_E\).
			
			Now, fix \(t\ge0\). 			
			For \(n\ge0\), define the measurable sets
			\(
			A_{n,t}:=
			B_{\mathrm{good}}\cap\{(x,j):\tau_n(j)=t\},
			\)
			and
			\(
			C_{n,t}:=
			B_{\mathrm{good}}\cap\{(x,j):\tau_n(j)<t<\tau_{n+1}(j)\}.
			\)
			For fixed \(t\), the family
			\(
			\{B_{\mathrm{good}}^\complement, (A_{n,t})_{n \ge 0},  (C_{n,t})_{n\ge0}\}
			\)
			is a countable measurable partition of \(E \times J\). 
			On \(B_{\mathrm{good}}^\complement\), we have
			\(
			\cl X_t(x,j)=x,
			\)
			which is Borel. On \(A_{n,t}\), we have
			\(
			\cl X_t(x,j)=x_n(x,j),
			\)
			which is Borel, as shown above.
			
			It remains to treat \(C_{n,t}\). Define
			\[
			r_{n,t}(j):=
			\begin{cases}
				t-\tau_n(j), & \tau_n(j)<\infty,\\
				0, & \tau_n(j)=\infty .
			\end{cases}
			\]
			This is a measurable map from \(J\) to \([0,\infty)\). It follows that the map
			\(
			(x,j) \mapsto \eta_n(r_{n,t}(j))
			\)
			is Borel.
			On \(C_{n,t}\), this map is exactly
			\(
			\cl X_t(x,j)=\eta_n(t-\tau_n(j)).
			\)
			Therefore \(\cl X_t\) is obtained by gluing countably many Borel maps
			along a countable Borel partition, and hence is Borel.
	\end{proof}

	\begin{lemma}
		\label{lem:bad-sets}
		The sets
		\begin{gather*}
			B_{\mathrm{mark}}:=\{\text{for some }n \text{ the first accepted atom of }m_n\text{ is not unique}\},\\
			B_{\sigma}
			:=
			\left\{
			\exists n\geq 0 \text{ such that }
			\tau_n(j)<\infty
			\text{ and }
			\sigma(h_n)=0
			\right\},\\
			B_{\eta}
			:=
			\left\{
			\exists n\geq 0 \text{ such that }
			\tau_n(j)<\infty
			\text{ and }
			\eta_n(0)\neq x_n(x,j)
			\right\},\\
			B_{\mathrm{acc}}:=\{\tau_n \not \to \infty\}
		\end{gather*}
		are $\Pi_{x}$ null.
	\end{lemma}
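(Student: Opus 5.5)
The plan is to reduce all four statements to one-step statements about the input kernel $\bb K(z;\cdot)=\bb Q_z\otimes\bb M$, using the product structure \eqref{eq:pi-x}. By the Ionescu--Tulcea construction, conditionally on $(h_0,\dots,h_{n-1})$ the coordinate $h_n$ has law $\bb K(x_n;\cdot)$ with $x_n=\Psi(h_{n-1})$. It therefore suffices to show that for every $z\in E$ the relevant one-step event has $\bb K(z;\cdot)$-probability zero (respectively, a uniform bound). We then integrate and take a countable union over $n$.

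\textbf{Mark uniqueness and positivity.} For $B_{\mathrm{mark}}$, fix $z$ and let $h=(\eta,m)\sim\bb Q_z\otimes\bb M$. Conditionally on $\eta$, the point process $m$ is Poisson with intensity $dt\,ds\,du$, which has a diffuse time marginal. Hence almost surely no two atoms share a time coordinate, and the first accepted atom, if any, is unique. Fubini in $(\eta,m)$ gives $\bb K(z;\text{non-unique})=0$.

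For $B_\sigma$: since $m$ is locally finite and $\bb M(m(\{0\}\times\cdots)>0)=0$, the accepted times form a locally finite subset of $(0,\infty)$. Thus the infimum $\sigma(h)$ is either attained at a positive time or equals $\infty$, and $\sigma(h)>0$ surely on the locally finite set. Note that $\Lambda(\eta(r-))\le\overline\Lambda$, so acceptance only uses atoms in $Z$.

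\textbf{Consistency of starting points.} For $B_\eta$, assumption \ref{it:D2} gives $\bb Q_z(\eta_0=z)=1$. Consequently $\bb K(z;\{\eta(0)\neq z\})=0$ for every $z$. Integrating against the conditional law of $x_n$ yields $\Pi_x(\tau_n<\infty,\ \eta_n(0)\neq x_n)=0$.

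\textbf{Non-accumulation.} For $B_{\mathrm{acc}}$, which I expect to be the only step needing care, I would use a coupling with the total number of atoms. The time projection $N(h)$ of $m$ restricted to $(0,\infty)\times(0,\overline\Lambda]\times(0,1)$ is a homogeneous Poisson process of rate $\overline\Lambda$. Since $\sigma(h)$ is at least the first atom time of $N$, each $\sigma_{n+1}$ stochastically dominates, conditionally on the past, an $\mathrm{Exp}(\overline\Lambda)$ variable. This is uniform in $z$, because $\bb M$ does not depend on $z$.

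More concretely, let $E_{n+1}$ be the first atom time of the projected process of $m_n$. Under $\Pi_x$ the $m_n$ are i.i.d.\ with law $\bb M$, since the kernel's $m$-marginal is always $\bb M$. Hence the $E_n$ are i.i.d.\ $\mathrm{Exp}(\overline\Lambda)$ with $\sigma_n\ge E_n$. The strong law of large numbers then gives $\tau_n\ge\sum_{k\le n}E_k\to\infty$ almost surely.

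The main technical point is justifying the i.i.d.\ claim for the $m_n$ from \eqref{eq:pi-x}. One observes that the joint law of $(m_0,\dots,m_N)$ factorizes: integrating out the $\eta$'s, the factor $\bb M(dm_k)$ never depends on earlier coordinates. Finally, the union of the four null sets is null.
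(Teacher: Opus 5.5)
Your proposal is correct and follows essentially the same route as the paper. You use one-step arguments under $\bb K(z;\cdot)=\bb Q_z\otimes\bb M$ for $B_{\mathrm{mark}}$ and $B_\eta$, and for $B_{\mathrm{acc}}$ you dominate $\sigma_{n+1}$ by the first atom time of the unthinned $\mathrm{Exp}(\overline\Lambda)$ clock; your explicit justification that the $m_n$ are i.i.d.\ with law $\bb M$, via the factorization \eqref{eq:pi-x}, usefully fills in what the paper only asserts. For $B_\sigma$ the paper instead uses the bound $\Pi_x(\sigma(h_n)=0\mid h_0,\dots,h_{n-1})\le 1-e^{-\overline\Lambda\varepsilon}$; your ``surely'' should read ``$\bb M$-almost surely'' if local finiteness is taken relative to compact subsets of $Z$, since such measures can still accumulate at $t=0$, but the Poisson intensity rules this out almost surely.
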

	\begin{proof}
		
		Fix \(n\). Conditionally on \(h_0,\ldots,h_{n-1}\), the random measure
		\(m_n\) is a Poisson random measure on
		\(
		Z
		\)
		with intensity \(dt\,da\,du\). Its projection onto the time coordinate is a
		Poisson point process on \((0,\infty)\) with intensity \(\overline\Lambda\,dt\).
		Since this intensity is nonatomic, almost surely no two atoms have the same
		time coordinate. Hence, whenever the first accepted atom exists, it is almost surely
		unique. Taking the countable union over \(n\), we obtain
		\(
		\Pi_x(B_{\mathrm{mark}})=0.
		\)

		Moreover, for every \(\varepsilon>0\),
		\[
		\{\sigma(h_n)=0\}
		\subset
		\left\{
		m_n\left((0,\varepsilon]\times(0,\overline\Lambda]\times(0,1)\right)\ge1
		\right\}.
		\]
		Consequently,
		\(
		\Pi_x(\sigma(h_n)=0\mid h_0,\ldots,h_{n-1})
		\le 1-e^{-\overline\Lambda\varepsilon}.
		\)
		Letting \(\varepsilon\to0\), we obtain \(\Pi_x(\sigma(h_n)=0)=0\) for every \(n\), and by countable
		subadditivity,
		\(
		\Pi_x(\exists n\ge0:\sigma(h_n)=0)=\Pi_x(B_{\sigma})=0.
		\)
		
		Since \(h_0\) has law \(\mathbb K(x;dh)=\mathbb Q_x(d\eta)\mathbb M(dm)\), assumption \ref{it:D2} gives
		\(
		\Pi_x(\eta_0(0)=x)=1.
		\)
		Moreover, for \(n\ge1\), conditionally on \(h_0,\ldots,h_{n-1}\), the
		input \(h_n\) has law
		\(
		\mathbb K(\Psi(h_{n-1});dh).
		\)
		Therefore
		\(
		\Pi_x\bigl(\eta_n(0)=J(h_{n-1})\mid h_0,\ldots,h_{n-1}\bigr)=1.
		\)
		Since \(x_n(x,j)=J(h_{n-1})\) for \(n\ge1\), it follows that
		\(
		\Pi_x(
		\eta_n(0)=x_n(x,j)\text{ for every }n\ge0
		)=1.
		\)
		Equivalently, \(\Pi_x(B_{\eta})=0\).
		
		For each \(n\), let
		\(\xi_{n+1}\) be the first atom time of \(m_n\) in the unrestricted space $Z$.
		Then \(\xi_{n+1}\) has exponential law with parameter \(\overline\Lambda\),
		and the variables \(\xi_1,\xi_2,\ldots\) are independent under $\Pi_x$. Since accepted atoms are a subset of all atoms,
		\(
		\sigma_{n+1}\ge \xi_{n+1}.
		\)
		Therefore
		\(\tau_n=\sum_{j=1}^n \sigma_j \ge \sum_{j=1}^n \xi_j \to \infty\),
		\(\Pi_x\)-a.s., which gives \(\Pi_x(B_{\mathrm{acc}})=0\).
	\end{proof}

	\begin{proposition}
		\label{prop:iii-iv}
		Let $\P_x$ be the solution to $(A,\cl D_0)$ constructed in Theorem~\ref{thm:existence} and call $X$ the corresponding process on $E$. Then:
		\begin{itemize}
			\item[(iii)] the jumps of $X$ are governed by the structural kernels and rates:
			whenever $X_{t-}=x\in S_\alpha$, jumps of type $e\in\cl E$ with $\alpha(e)=\alpha$
			occur with conditional intensity $\lambda_e(x)$, and conditional on such a jump of type $e$,
			the post-jump location has law $K_e(x,\cdot)$;
			\item[(iv)] for every $f\in\cl D_0$, the process
			\[
			M_t^f=f(X_t)-f(X_0)-\int_0^t A f(X_s)\,ds
			\]
			is an $(\cl F_t^x)$--martingale. In particular, the law of $X$
			on $D_E$ is a solution of the martingale problem for
			$(A,\cl D_0)$ with initial condition $x$.
		\end{itemize}
	\end{proposition}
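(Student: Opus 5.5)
We work on the input space $(J,\cl J,\Pi_x)$ and transfer everything to $\P_x$ through \eqref{eq:Px-def-by-push-forw}. Throughout we discard the $\Pi_x$-null sets of Lemma~\ref{lem:bad-sets}.

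\textbf{The jump counting measure.} First introduce the filtration $\cl G_t$ generated by the concatenated path $\Chi^x_{\cdot\wedge t}$, by the restrictions of the marked point measures $m_n$ to the shifted windows $(0,(t-\tau_n)^+]\times(0,\overline\Lambda]\times(0,1)$, and by the stopped diffusion inputs. Next build the global marked point measure
\[
\mathbf m(dt,ds,du):=\sum_{n\ge0}\one_{\{\tau_n<\infty\}}\,m_n\bigl(\tau_n+dt,ds,du\bigr)\,\one_{\{t\le \tau_{n+1}\}} .
\]
Using the product form \eqref{eq:pi-x}, we check that $\mathbf m$ is a $(\cl G_t)$-Poisson random measure with intensity $dt\,ds\,du$. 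On each interval $(\tau_n,\tau_{n+1}]$ this is the strong Markov/independent-increments property of $m_n$, which is independent of $\eta_n$ and of the past inputs. The pieces are then glued at the stopping times $\tau_n$.

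The jumps of $X$ are exactly the atoms $(t,s,u)$ of $\mathbf m$ with $s\le\Lambda(X_{t-})$. The post-jump state is $F_{e(X_{t-},s)}(X_{t-},u)$.

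\textbf{Proof of (iii).} For an edge $e$ and $B\in\cl B(E)$, the jump count $N^{e,B}_t$ equals the integral of the predictable integrand
\[
\one\{e(X_{r-},s)=e,\ F_e(X_{r-},u)\in B\}\,\one\{s\le\Lambda(X_{r-})\}
\]
against $\mathbf m$. The compensator formula then yields the compensator
\[
\int_0^t\!\!\int\!\!\int \one\{e(X_{r-},s)=e\}\,\one\{F_e(X_{r-},u)\in B\}\,ds\,du\,dr=\int_0^t\lambda_e(X_{r-})K_e(X_{r-},B)\,dr .
\]
This uses that the $s$-slice selecting $e$ has Lebesgue length $\lambda_e$, and that $F_e(z,U)\sim K_e(z,\cdot)$. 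This gives (iii).

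\textbf{Proof of (iv).} Fix $f\in\cl D_0$ and $T>0$, and decompose
\[
f(X_t)-f(x)=\sum_n\bigl[f(X_{t\wedge\tau_{n+1}-})-f(X_{t\wedge\tau_n})\bigr]+\sum_{0<r\le t}\Delta f(X_r).
\]

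\emph{Continuous pieces.} On $[\tau_n,\tau_{n+1})$ we have $X=\eta_n(\cdot-\tau_n)$ in a single stratum. By the Remark after Assumption~\assref{ass:diffusions}, $f|_{S_\alpha}(\eta_n(r))-\int_0^r L_\alpha f(\eta_n)$ is a $\bb Q^\alpha$-martingale. Since $\sigma(h_n)$ is a stopping time for the joint filtration of $(\eta_n,m_n)$, and $m_n$ is independent of $\eta_n$, optional stopping at $\sigma(h_n)\wedge(T-\tau_n)^+$ keeps the martingale property. This uses boundedness of $f$ and $L_\alpha f$. Conditioning on $\cl G_{\tau_n}$ via \eqref{eq:pi-x} shows that each piece is a $(\cl G_t)$-martingale increment.

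\emph{Jump part.} By the compensator computation above, $\sum_{r\le t}\Delta f(X_r)-\int_0^t Jf(X_r)\,dr$ is a martingale, since the integrand is bounded by $2\|f\|_\infty$ and the intensity is at most $\overline\Lambda$.

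\emph{Summing.} The number of jumps before $T$ is dominated by a Poisson$(\overline\Lambda T)$ variable. We sum over $n$ with dominated convergence, using $\|Af\|_\infty\le\|L f\|+2\overline\Lambda\|f\|$. This shows that $M^f$ is a $(\cl G_t)$-martingale, hence a martingale for the smaller filtration generated by $X$. It remains a martingale after right-continuous completion. Pushing forward gives the martingale problem under $\P_x$.

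\textbf{Main obstacle.} The main obstacle is the rigorous identification of $\mathbf m$ as a Poisson random measure in the enlarged filtration $(\cl G_t)$ across the concatenation times. The difficulty is that the $\tau_n$ are defined through the inputs themselves. We plan to handle this by induction on $n$: we use the product structure of $\Pi_x$, together with the fact that $\tau_{n+1}-\tau_n=\sigma(h_n)$ is a stopping time for the natural filtration of $(\eta_n,m_n)$.
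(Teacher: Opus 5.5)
Your proposal is correct and follows essentially the paper's route: lift to $(J,\cl J,\Pi_x)$ with a non-anticipative construction filtration, and use the product form \eqref{eq:pi-x} so that $m_n$ is a fresh Poisson random measure given $\cl G_{\tau_n}$ and $\sigma(h_n)$ is a stopping time for the joint filtration of $(\eta_n,m_n)$; then combine the optionally stopped Dynkin martingale of $\eta_n$ with the Poisson compensator, sum over segments by dominated convergence, restrict to the natural filtration of $X$, and push forward through $\Chi^x$. The only difference is packaging: you first glue the $m_n$ into a single physical-time Poisson measure (the shifted piece should be the image of $m_n$ under $q\mapsto\tau_n+q$, not $m_n(\tau_n+dt,\cdot)$), whereas the paper computes compensators segmentwise in local time and telescopes the combined diffusion-plus-jump martingale up to $\tau_N$, which gives the uniform bound $2\|f\|_\infty+T\|Af\|_\infty$ in place of your Poisson domination of the number of jumps.
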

	\begin{proof}
		We define 
		\(\cl F_t^0=\sigma(X_s, 0\le s \le t)\)
		and $\cl F_t^x$ its right-continuous completed version under $\bb P_x$.
		On $(J,\cl J, \Pi_x)$, we denote
		\(\widehat{\cl F}_t^0:= (\Chi^x)^{-1}(\cl F_t^0)=\sigma (X_s \circ \Chi^x, 0\le s \le t)=\sigma (\Chi_s^x, 0\le s \le t)\)
		and call $\widehat{\cl F}_t^x$ its right-continuous completion under $\Pi_x$. If $N \in \cl F_t^x$ is $\bb P_x$-null, then $(\Chi^x)^{-1}N$ is $\Pi_x$-null. This gives
		\((\Chi^x)^{-1} \cl F_t^x \subset \widehat{\cl F}_t^x.\)
		
		Moreover, on $(J,\cl J, \Pi_x)$, we denote by \(\cl G^x=(\cl G_t^x)_{t\ge0}\)  the completed, right-continuous
		construction filtration generated non-anticipatively by the recursive
		construction: it contains the Poisson atoms and marks used up to physical time
		\(t\), the diffusion segments up to the physical times at which they have
		been used, and the post-jump states already produced.
		Formally, for \(t\ge0\), set
		\(
		\theta_n(t):=((t-\tau_n)^+)\wedge\sigma_{n+1}.
		\)
		Let \(\mathcal G_t^{0,x}\) be the \(\sigma\)-field generated by the random variables
		\(
		\mathbf 1_{\{\tau_n\le t\}}\eta_n(s\wedge\theta_n(t))
		\)
		and
		\(
		\mathbf 1_{\{\tau_n\le t\}}
		m_n((0,s\wedge\theta_n(t)]\times C)
		\), where \( s\ge0,\ C\in\cl B((0,\overline\Lambda]\times(0,1)) \text{ and } n\ge0,\)
		together with the already produced post-jump states
		\(
		\mathbf 1_{\{\tau_n\le t\}}x_n,
		\) for \(n \ge 0\),
		where arbitrary cemetery values are used on the complements of the events
		\(\{\tau_n\le t\}\). Let \((\mathcal G_t^x)_{t\ge0}\) then be
		right-continuous completion of \((\mathcal G_t^{0,x})_{t\ge0}\) under \(\Pi_x\).

		By construction, the lifted process
		\(
		X^x_t=X_t\circ\Chi^x
		\)
		is \(\cl G^x_t\)-adapted, and the structural jump times
		\((\tau_n)_{n\ge0}\) are \((\cl G^x_t)\)-stopping times. Moreover, if
		\(
		\widehat{\cl F}^{x}_t
		:=
		(\Chi^x)^{-1}\cl F^x_t
		\)
		denotes the lifted completed canonical filtration, then
		\(
		\widehat{\cl F}^{x}_t\subset \cl G^x_t.
		\)
		By construction one has
		\((\Chi^x)^{-1} \cl F_t^x \subset \widehat{\cl F}_t^x \subset \cl G_t^x.\)
		
		We first prove (iii). 
		Fix an edge \(e=e_{\alpha,k}\), with $\alpha(e)=\alpha$, and a Borel set \(B\in\cl B(E)\). On the
		\(n\)-th segment, we call $\alpha_n$ the index corresponding to $\eta_n$ and define the local counting process of type-\(e\) jumps landing
		in \(B\) by
		\[
		\begin{aligned}
			\mathsf N^{e,B,n}_u
			:=
			\int_0^{u}
			\int_0^{\overline\Lambda}
			\int_0^1
			&\mathbf 1_{\{\alpha\}}(\alpha_n)\,
			\mathbf 1_{(\Gamma_{\alpha,k-1}(\eta_n(q-)),\,
				\Gamma_{\alpha,k}(\eta_n(q-))]}(a) 
			\mathbf 1_B\!\left(F_e(\eta_n(q-),b)\right)
			\,m_n(dq,da,db).
		\end{aligned}
		\]
		Conditionally on \(\cl G^x_{\tau_n}\), the integrand is predictable in the variables \((q,j)\)
		with respect to the local filtration generated by the \(n\)-th diffusion
		segment and Poisson measure, and is Borel
		in the mark variables \((a,b)\).
		Conditionally on \(\cl G_{\tau_n}^x\), the measure \(m_n\) is
		a fresh Poisson random measure with intensity \(dq\,da\,db\). Hence \(N^{e,B,n}\) has compensator
		\[
		\begin{aligned}
			\mathsf{C}^{e,B,n}_u:=&\int_0^{u}
			\mathbf 1_{\{\alpha\}}(\alpha_n)
			\int_{\Gamma_{\alpha,k-1}(\eta_n(q-))}^{\Gamma_{\alpha,k}(\eta_n(q-))}
			\int_0^1
			\mathbf 1_B\!\left(F_e(\eta_n(q-),b)\right)
			\,db\,da\,dq 
			=\int_0^{u}
			\lambda_e(\eta_n(q-))K_e(\eta_n(q-),B)\,dq,
		\end{aligned}
		\]
		where $\lambda_e$ is extended by $0$ outside $S_{\alpha(e)}$.
		The process
		\(\mathsf{JM}^{e,B,n}_u=\mathsf N^{e,B,n}_u-\mathsf C^{e,B,n}_u\)
		is a martingale, and so is the stopped process $\mathsf{JM}^{e,B,n}_{u \wedge \sigma_{n+1}}$.
		
		Now pass to physical time. Set
		\(
		u_n(t):=((t-\tau_n)^+)\wedge\sigma_{n+1},
		\)
		and define
		\(
		\mathsf{JM}^{e,B,x}_t
		:=
		\sum_{n\ge0}\mathsf{JM}^{e,B,n}_{u_n(t)}.
		\)
		Since \(\tau_n\to\infty\), only finitely many terms contribute on compact
		time intervals. Therefore
		\(
		\mathsf{JM}^{e,B,x}_t=\mathsf N^{e,B,x}_t
		-
		\int_0^t
		\lambda_e(\Chi^x_{r-})K_e(\Chi^x_{r-},B)\,dr
		\)
		is a \(\cl G^x\)-martingale.
		
		Taking \(B=S_{\beta(e)}\), and using that
		\(
		K_e(z,S_{\beta(e)})=1 \text{ for } z\in S_{\alpha(e)},
		\)
		we obtain the type-\(e\) counting process
		\(
		\mathsf N^{e,x}_t:=\mathsf N^{e,S_{\beta(e)},x}_t,
		\)
		with compensator
		\(
		\int_0^t \lambda_e(\Chi^x_{r-})\,dr.
		\)
		Thus jumps of type \(e\) have stochastic intensity
		\(
		\mathsf C^{e,x}_t:=\lambda_e(\Chi^x_{t-}).
		\)
		
		Finally, on the good set of the construction, every accepted jump has a unique
		edge type. Hence the process
		\(
		\mathsf N^{x}_t
		=
		\sum_{e\in\cl E}\mathsf N^{e,x}_t.
		\)
		has compensator
		\[
			\mathsf C^x_t:=\sum_{e\in\cl E}
			\int_0^t \lambda_e(\Chi^x_{r-})\,dr
			=
			\int_0^t
			\sum_{e:\alpha(e)=\alpha(\Chi^x_{r-})}
			\lambda_e(\Chi^x_{r-})\,dr
			=
			\int_0^t \Lambda(\Chi^x_{r-})\,dr,
		\]
		where the countable sum is justified by monotone convergence and by the uniform
		bound \(\Lambda\le\overline\Lambda\). Therefore the accepted jump process has
		stochastic intensity \(\Lambda(\Chi^x_{t-})\).
		
		The compensator formulas for \(\mathsf C^{e,B,x}\) and \(\mathsf C^{e,x}\) also show that, conditional on a
		type-\(e\) jump from a pre-jump state \(z\), the landing point has law
		\(K_e(z,\cdot)\). 
		This proves (iii) for the constructed process on the construction space.
		Since \(\P_x=(\Chi^x)_\#\Pi_x\), the corresponding jump mechanism is carried by
		the canonical process on \(D_E\) under \(\P_x\).
		
		It remains to verify (iv). For \(f\in\cl D_0\), define
		\(
		\mathscr M^{f,x}_t
		:=
		f(\Chi^x_t)-f(x)-\int_0^t Af(\Chi^x_r)\,dr.
		\)
		We first show that \(\mathscr M^{f,x}\) is a
		\((\Pi_x,\cl G^x_t)\)-martingale and then push the property forward through $\Chi^x$.
		
		To that purpose, fix $n \ge 0$. Conditionally on
		\(\cl G_{\tau_n}^x\), the stopped process
		\begin{equation}
			\label{eq:diffusion-local-martinale}
			\mathscr D_{u \wedge \sigma_{n+1}}^{f,n}
			:=
			f(\eta_n(u\wedge\sigma_{n+1}))
			-
			f(\eta_n(0))
			-
			\int_0^{u\wedge\sigma_{n+1}}
			L_{\alpha_n}f_{\alpha_n}(\eta_n(q))\,dq
		\end{equation}
		is a martingale in the local time variable \(u\).	
		For the jump part, define the accepted jumps process
		\[
		\begin{aligned}
			\mathscr N^{f,n}_{u}
			:=&
			\int_0^{u}\int_0^{\overline \Lambda}\int_0^1\mathbf 1_{\{a\le \Lambda(\eta_n(q-))\}}
			\left[
			f\!\left(F_{e(\eta_n(q-),a)}(\eta_n(q-),b)\right)
			-
			f(\eta_n(q-))
			\right]m_n(dq,da,db).
		\end{aligned}
		\]
		Again, conditional on $\cl G_{\tau_n}^x$, the integrand is predictable in the variables $(q,j)$ and Borel in $(a,b)$.	
		Its compensator is
		\(
		\mathscr C^{f,n}_{u}:=\int_0^{u}
		Jf(\eta_n(q))\,dq.
		\)		
		This implies that the process
		\(\mathscr{J}^{f,n}_u =\mathscr N^{f,n}_{u}-\mathscr C^{f,n}_{u}\)
		is a martingale. Now stopping at $\sigma_{n+1}$ we get only the first jump, that is
		\begin{equation}
			\label{eq:jump-local-martingale}
			\begin{aligned}
				\mathscr{J}^{f,n}_{u\wedge\sigma_{n+1}} &=\mathscr N^{f,n}_{u\wedge\sigma_{n+1}}-\mathscr C^{f,n}_{u\wedge\sigma_{n+1}}=\one_{\{\sigma_{n+1}\le u\}}\left[f(x_{n+1})-f(\eta_n(\sigma_{n+1}-))\right]-\int_0^{u\wedge \sigma_{n+1}}
				Jf(\eta_n(q))\,dq
			\end{aligned}
		\end{equation}
		is a martingale in the local time variable \(u\) conditional on $\cl G_{\tau_n}^x$. Putting \eqref{eq:diffusion-local-martinale} and \eqref{eq:jump-local-martingale} together, we obtain that, conditional on $\cl G_{\tau_n}^x$
		\[f(\eta_n(u\wedge\sigma_{n+1}))
		-
		f(\eta_n(0))
		+\one_{\{\sigma_{n+1}\le u\}}\left[f(x_{n+1})-f(\eta_n(\sigma_{n+1}-))\right]-\int_0^{u\wedge\sigma_{n+1}}
		Af(\eta_n(q))\,dq\]
		is a martingale in local time.		
		
		Passing to physical time through \(u_n(t):=((t-\tau_n)^+)\wedge\sigma_{n+1}\) and stopping at $\tau_N, \, N \ge 0$, telescopic sums give the stopped martingale
		\(\mathscr M_{t \wedge \tau_N}^{f,x}=f(\Chi_{t \wedge \tau_N}^x)-f(x)-\int_0^{t\wedge\tau_{N}}
		Af(\Chi_{r}^x)\,dr.\)
		Since
		\(\tau_N\to\infty\), \(\Pi_x\)-a.s., and since, for fixed
		\(T>0\),
		\(
		\sup_{0\le t\le T}
		|\mathscr M_{t\wedge\tau_N}^{f,x}|\le
		2\|f\|_\infty+T\|Af\|_\infty,
		\) dominated convergence allows us to establish that the full process
		\(\mathscr M_t^{f,x}\) is a \(\Pi_x,\cl G_t^x\)-martingale.

		Moreover \(\mathscr M^{f,x}_t\) is a functional of the physical path up to time \(t\), hence
		it is \(\widehat{\cl F}^x_t\)-measurable. Since $\widehat{\cl F}^{x}_s\subset \cl G^{x}_s,
		$ the tower property gives, for \(s\le t\),
		\(
		\mathbb E_{\Pi_x}
		[\mathscr M^{f,x}_t\mid \widehat{\cl F}^{x}_s]
		=
		\mathbb E_{\Pi_x}[\mathbb E_{\Pi_x}[
		\mathscr M^{f,x}_t\mid \cl G^{x}_s]
		\mid \widehat{\cl F}^{x}_s]
		=
		\mathscr M^{f,x}_s.
		\)
		Thus \(\mathscr M^{f,x}\) is also a
		\((\Pi_x,\widehat{\cl F}^{x}_t)\)-martingale.
		
		Finally, 
		since
		\(
		\mathscr M^{f,x}_t= M^f_t\circ\Chi^x
		\)
		and \(\P_x=(\Chi^x)_\#\Pi_x\), we have, for every \(A\in\cl F^x_s\),
		\[
		\mathbb E_{\P_x}\left[\mathbf 1_A M^f_t\right]
		=
		\mathbb E_{\Pi_x}
		\left[
		\mathbf 1_{(\Chi^x)^{-1}(A)}\mathscr M^{f,x}_t
		\right]
		=
		\mathbb E_{\Pi_x}
		\left[
		\mathbf 1_{(\Chi^x)^{-1}(A)}\mathscr M^{f,x}_s
		\right]
		=
		\mathbb E_{\P_x}\left[\mathbf 1_A M^f_s\right].
		\]
		Therefore \(M^f\) is a martingale under \(\P_x\) with respect to the
		completed natural filtration \((\cl F_t^x)_{t \ge 0}\). Hence \(\P_x\) solves the martingale problem for
		\((A,\cl D_0)\). This concludes the proof.
	\end{proof}
	
	\section{Strong-Markov property}
	\label{appsec:strong-markov}
	Here we gather all technical results and proofs needed for the proof of Theorem~\ref{thm:strong-markov}.

	\begin{lemma}
		\label{lem:restart-at-jump-times}
		Fix \(x\in E\). Let \((\tau_k)_{k\ge0}\) be the canonical jump times of \(X\),
		with \(\tau_0=0\). Then, for every \(k\ge0\) and every \(F \in B_b(D_E)\),
		\[
		\mathbb E_x
		\left[
		F\big((X_{\tau_k+s})_{s\ge0}\big)
		\,\middle|\,
		\cl F_{\tau_k}^x
		\right]
		=
		\int_{D_E}F(\omega)\,\P_{X_{\tau_k}}(d\omega)
		\qquad
		\P_x\text{-a.s. on }\{\tau_k<\infty\}.
		\]
	\end{lemma}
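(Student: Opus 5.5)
We work on the construction space $(J,\cl J,\Pi_x)$ and transfer the identity to $D_E$ through $\P_x=(\Chi^x)_\#\Pi_x$. The idea is that the $k$-th restart is built into the Ionescu--Tulcea product \eqref{eq:pi-x}. Conditionally on $(h_0,\dots,h_{k-1})$, the shifted input sequence $\theta_k j:=(h_k,h_{k+1},\dots)$ has law $\Pi_{x_k(x,j)}$, where $x_k(x,j)=\Psi(h_{k-1})$ for $k\ge1$. This is the Markov property of the Ionescu--Tulcea construction: the kernel $\bb K(\Psi(h_{n-1});dh_n)$ depends on the past only through $\Psi(h_{n-1})$.

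The first step is a pathwise shift identity. On $B_{\mathrm{good}}\cap\{\tau_k<\infty\}$ we have $\tau_{k+n}(j)-\tau_k(j)=\tau_n(\theta_kj)$ and $x_{k+n}(x,j)=x_n(x_k(x,j),\theta_kj)$. Consequently
\[
\bigl(\Chi^x_{\tau_k(j)+s}(j)\bigr)_{s\ge0}=\Chi\bigl(x_k(x,j),\theta_kj\bigr).
\]
We also need that $(x_k(x,j),\theta_kj)\in B_{\mathrm{good}}$ whenever $(x,j)\in B_{\mathrm{good}}$, which is immediate from the definitions. Since the bad sets are $\Pi_x$-null by Lemma~\ref{lem:bad-sets}, the identity holds $\Pi_x$-a.s. on $\{\tau_k<\infty\}$. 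Combining this with the conditional law of $\theta_kj$, for $F\in B_b(D_E)$ we obtain
\[
\E_{\Pi_x}\bigl[F(\Chi(x_k,\theta_k j))\mid h_0,\dots,h_{k-1}\bigr]=\int_J F(\Chi(x_k,j'))\,\Pi_{x_k}(dj')=\int_{D_E}F\,d\P_{x_k}.
\]
This uses joint measurability of $(y,j')\mapsto F(\Chi(y,j'))$, which follows from Lemma~\ref{lem:Chi-borel}, and measurability of $y\mapsto\Pi_y$, established at the end of the proof of Theorem~\ref{thm:existence}. Both are needed for the disintegration. Note that $\tau_k$ is $\sigma(h_0,\dots,h_{k-1})$-measurable, so restricting to $\{\tau_k<\infty\}$ is harmless. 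Also $x_k=\Chi^x_{\tau_k}$ on that event.

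The second step, which I expect to be the main obstacle, is the filtration comparison. We must pass from conditioning on $\sigma(h_0,\dots,h_{k-1})$ to conditioning on $(\Chi^x)^{-1}\cl F^x_{\tau_k}$. The plan is to show that
\[
(\Chi^x)^{-1}\cl F^x_{\tau_k}\subset \cl G^x_{\tau_k}\subset \sigma(h_0,\dots,h_{k-1})\vee\cl N,
\]
where $\cl N$ denotes the $\Pi_x$-null sets. The first inclusion uses $\widehat{\cl F}^x_t\subset\cl G^x_t$ from the proof of Proposition~\ref{prop:iii-iv}, together with the fact that $\tau_k$ is a $\cl G^x$-stopping time, after checking that it is also an $\widehat{\cl F}^x$-stopping time. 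For the latter I would use that $\tau_k$ is a jump time of the path, or the $k$-th stratum change, which is detectable from the path since strata are Borel and $K_e(x,\{x\})=0$. The second inclusion holds because the generators of $\cl G^{0,x}_{\tau_k}$ only involve $\eta_n$ and $m_n$ up to $\sigma_{n+1}$ for $n<k$, and the states $x_n$ for $n\le k$. All of these are functions of $h_0,\dots,h_{k-1}$. Some care is needed with right-continuity and completion at the stopping time $\tau_k$; I would handle this with the standard discrete approximation $\tau_k^{(m)}\downarrow\tau_k$. Once the inclusions are in place, the tower property gives the identity with conditioning on $(\Chi^x)^{-1}\cl F^x_{\tau_k}$, because the right-hand side $\int F\,d\P_{\Chi^x_{\tau_k}}$ is measurable with respect to that smaller $\sigma$-field.

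Finally, we push forward. For $A\in\cl F^x_{\tau_k}$ we have
\[
\E_{\P_x}\bigl[\one_{A\cap\{\tau_k<\infty\}}F(X_{\tau_k+\cdot})\bigr]=\E_{\Pi_x}\bigl[\one_{(\Chi^x)^{-1}A}\,\one_{\{\tau_k<\infty\}}\,F(\Chi^x_{\tau_k+\cdot})\bigr].
\]
Applying the previous steps turns this into $\E_{\P_x}\bigl[\one_{A\cap\{\tau_k<\infty\}}\int F\,d\P_{X_{\tau_k}}\bigr]$, which proves the claim.
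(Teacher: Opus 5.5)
Your proposal is essentially the paper's proof: you lift to $(J,\cl J,\Pi_x)$, read off from \eqref{eq:pi-x} that the shifted inputs have conditional law $\Pi_{x_k}$, identify $(\Chi^x_{\tau_k+s})_{s\ge0}$ with $\Chi(x_k,\theta_kj)$, pass from $\sigma(h_0,\dots,h_{k-1})$ to $(\Chi^x)^{-1}\cl F^x_{\tau_k}$ via $\cl G^x_{\tau_k}$ and the tower property, and push forward. You are in fact more explicit than the paper about the shift identity, the invariance of $B_{\mathrm{good}}$, and the measurability of $y\mapsto\Pi_y$ needed for the disintegration. The one step where your discrete approximation $\tau_k^{(m)}\downarrow\tau_k$ is not enough (the paper also only asserts it) is the inclusion $\cl G^x_{\tau_k}\subset\sigma(h_0,\dots,h_{k-1})$ modulo null sets: because $\cl G^x$ is right-continuous, $\cl G^x_{\tau_k}$ contains the germ at time $0$ of the fresh input $h_k=(\eta_k,m_k)$, and absorbing it requires a Blumenthal $0$--$1$ law for $\bb Q_y$ (from the Feller property \ref{it:D3} and path continuity) and for the Poisson measure, applied to the $h_k$-sections for fixed $(h_0,\dots,h_{k-1})$.
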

	
	\begin{proof}
		We prove the identity by lifting it to \((J,\cl J,\Pi_x)\) and then
		pushing it forward.		
		Continuity of the intra-stratum diffusions and \eqref{eq:no-jump-in-place} imply that canonical jump times of $X$, i.e. discontinuity times of the canonical path, coincide $\P_x$-a.s. with structural jump times of the construction in Theorem~\ref{thm:existence}.		
		Let $I_k$ be defined as in the proof of Proposition~\ref{prop:iii-iv}. We observe that
		\(
		\widehat{\cl F}_{\tau_k}^x
		\subset \cl G_{\tau_k}^x \subset
		\cl I_k,
		\text{ }
		\Pi_x\text{-modulo null sets}.
		\)
		Indeed, up to time \(\tau_k\), the construction has used only the first \(k\)
		inputs \(h_0,\ldots,h_{k-1}\), and the next input \(h_k\) is not revealed
		before the process restarts from
		\(
		\Chi^x_{\tau_k}=\Psi(h_{k-1}).
		\)
		
		By \eqref{eq:pi-x}, denoting $\theta_{\tau_k} j:=
		(h_k,h_{k+1},\ldots)$, we get
		\(
		\cl L_{\Pi_x}
		(\theta_{\tau_k}j
		\,|\,
		\cl I_k)=
		\Pi_{x_k}.
		\)		
		Applying the measurable construction map, we get
		\(
		(\Chi^x_{\tau_k+s}(j))_{s\ge0}
		=
		\Chi^{x_k}(\theta_{\tau_k} j)
		\text{ } \Pi_x\text{-a.s. on }\{\tau_k<\infty\}.
		\)
		Thus, for every bounded Borel \(F:D_E\to\mathbb R\),
		\begin{equation}
			\label{eq:cond-exp}
			\begin{aligned}
				\mathbb E_{\Pi_x}
				\left[
				F\big((\Chi^x_{\tau_k+s})_{s\ge0}\big)
				\,\middle|\,
				\cl I_k
				\right]
				&=
				\int_{\cl J}
				F(\Chi^{x_k}(y))
				\,\Pi_{x_k}(d y)                                      
				&=
				\int_{D_E}
				F(\omega)
				\,\P_{x_k}(d\omega)&=\int_{D_E}
				F(\omega)
				\,\P_{\Chi^x_{\tau_k}}(d\omega).
			\end{aligned}
		\end{equation}
		Moreover the right-hand side of \eqref{eq:cond-exp} is a Borel function of
		\(\Chi^x_{\tau_k}\), hence it is
		\(\widehat{\cl F}_{\tau_k}^x\)-measurable. Consequently, by the tower property, we get
		\(
		\mathbb E_{\Pi_x}[F\big((\Chi^x_{\tau_k+s})_{s\ge0}\big)
		\,|\,
		\widehat{\cl F}_{\tau_k}^x]
		=
		\int_{D_E}
		F(\omega)
		\,\P_{\Chi^x_{\tau_k}}(d\omega).
		\)		
		Finally we push this identity forward through \(\Chi^x\) to obtain the desired claim.
	\end{proof}

	\begin{lemma} 
		\label{lem:before-first-jump}
		Let \(S\) be a bounded \((\cl F_t^x)\)-stopping time
		and let \(A\in\cl F_S^x\) satisfy
		\(
		A\subset\{S<\tau_1\}.
		\)
		Then, for every bounded Borel functional \(F:D_E\to\mathbb R\),
		\[
		\mathbf 1_A
		\mathbb E_x
		\left[
		F\big((X_{S+r})_{r\ge0}\big)
		\,\middle|\,
		\cl F_S^x
		\right]
		=
		\mathbf 1_A
		\int_{D_E}F(\omega)\,\P_{X_S}(d\omega),\quad\P_x\text{-a.s.}
		\]
	\end{lemma}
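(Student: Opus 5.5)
I will lift the identity to the construction space \((J,\cl J,\Pi_x)\), as in Lemma~\ref{lem:restart-at-jump-times}, and push it forward through \(\Chi^x\) at the end. On \(\{S<\tau_1\}\) the lifted stopping time \(\hat S:=S\circ\Chi^x\) is a \(\cl G^x\)-stopping time, and it reads only the first input \(h_0=(\eta_0,m_0)\) up to local time \(\hat S\). On this event \(\Chi^x_{\hat S}=\eta_0(\hat S)\). I then introduce the shifted input
\[
h_0^{\hat S}:=\bigl(\eta_0(\hat S+\cdot),\ \theta_{\hat S}m_0\bigr),
\qquad
\theta_{\hat S}m_0(B):=m_0\bigl(\{(r-\hat S,s,u):(r,s,u)\in B,\ r>\hat S\}\bigr).
\]
On \(\{\hat S<\sigma(h_0)\}\) the first accepted time of \(h_0^{\hat S}\) equals \(\sigma(h_0)-\hat S\), and its update state equals \(\Psi(h_0)\), since acceptance uses \(\eta_0(r-)\) only. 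The subsequent inputs \(h_1,h_2,\dots\) are untouched. This gives the pathwise identity
\[
(\Chi^x_{\hat S+r})_{r\ge0}=\Chi^{\eta_0(\hat S)}\bigl(h_0^{\hat S},h_1,h_2,\ldots\bigr)\qquad\text{on }\{\hat S<\tau_1\},
\]
which I will verify directly from the definition of \(\Chi\) and \(B_{\mathrm{good}}\).

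The key step is to compute the conditional law of \(j^{\hat S}:=(h_0^{\hat S},h_1,\ldots)\) given \(\cl G^x_{\hat S}\) on \(A\). I claim it is \(\Pi_{\eta_0(\hat S)}\). I will prove this in three parts.

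\emph{(a) The first input.} Conditionally on \(\cl G^x_{\hat S}\), \(h_0^{\hat S}\) has law \(\bb K(\eta_0(\hat S);\cdot)\). The diffusion component uses the strong Markov property of \((\bb Q^{\alpha}_z)\). I will derive this from \ref{it:D3}: a Feller càdlàg family is strong Markov with respect to its right-continuous filtration, and the Borel dependence \ref{it:D1} makes the kernel measurable. The Poisson component uses the strong Markov property of a Poisson random measure: the restriction of \(m_0\) to times after a stopping time of the joint filtration is again Poisson with the same intensity and independent of the past. Independence of \(\eta_0\) and \(m_0\) has to survive this joint stopping. I will handle it by first treating stopping times taking countably many values, where it follows from the product structure of \(\bb K\) and the independent increments of both components. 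General \(\hat S\) then follows by dyadic approximation \(\hat S_k\downarrow\hat S\), using right-continuity of paths, the Feller continuity of \(z\mapsto\bb Q_z\) on \(C_0\)-test functionals, and the fact that \(\{\hat S_k<\tau_1\}\uparrow\{\hat S<\tau_1\}\).

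\emph{(b) The later inputs.} The conditional law of \((h_1,h_2,\ldots)\) given \(h_0\) is the Ionescu--Tulcea product \eqref{eq:pi-x} started from \(\Psi(h_0)=\Psi(h_0^{\hat S})\).

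\emph{(c) Combination.} Since \(\cl G^x_{\hat S}\cap\{\hat S<\tau_1\}\) is generated by \(h_0\) up to \(\hat S\), (a) and (b) together give the claimed conditional law.

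With the claim in hand I conclude as in \eqref{eq:cond-exp}:
\[
\bb E_{\Pi_x}\bigl[F\bigl((\Chi^x_{\hat S+r})_r\bigr)\,\big|\,\cl G^x_{\hat S}\bigr]=\int F\,d\P_{\Chi^x_{\hat S}}\quad\text{on }A.
\]
The right-hand side is \(\widehat{\cl F}^x_{\hat S}\)-measurable, so the tower property, with \(\widehat{\cl F}^x\subset\cl G^x\), gives the same identity conditionally on \(\widehat{\cl F}^x_{\hat S}\). Pushing forward through \(\Chi^x\), exactly as at the end of Proposition~\ref{prop:iii-iv}, yields the statement.

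I expect step (a) to be the main obstacle, specifically the joint strong Markov property of the pair (diffusion, Poisson measure) at a stopping time of the construction filtration. I will first attempt the discrete-valued approximation. Continuity of \(z\mapsto\int F\,d\P_z\) is not available for general Borel \(F\), so I will prove the approximation step only for \(F\) depending on finitely many coordinates through \(C_0(\widetilde E)\)-functions, and then extend to bounded Borel \(F\) by a monotone class argument.
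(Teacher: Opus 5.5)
Your proposal follows the paper's proof of this lemma. You lift to $(J,\cl J,\Pi_x)$ and shift the first input at $\hat S$. You note that on $\{\hat S<\sigma(h_0)\}$ the first accepted atom is unchanged, so $\Psi(h_0)=\Psi(h_0^{\hat S})$. You then identify the conditional law of $(h_0^{\hat S},h_1,\ldots)$ given $\cl G^x_{\hat S}$ on the lifted event as $\Pi_{\Chi^x_{\hat S}}$ via \eqref{eq:pi-x}, and conclude by the tower property and push-forward through $\Chi^x$. Your step (a) is more careful than the paper, which invokes the strong Markov properties of the diffusion and of the Poisson measure separately without addressing that $\hat S$ is a stopping time of the joint filtration. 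Reducing to countably-valued stopping times, where the product form of $\bb K$ does the work, and passing to the limit with the Feller continuity of $z\mapsto\bb Q_z$ is a sound way to close that point.

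The plan in your final paragraph, however, would not work as stated. You propose to take the dyadic limit for cylinder functionals $F$ built from $C_0(\widetilde E)$ functions. That needs continuity of $z\mapsto\int F\,d\P_z$ for such $F$, i.e.\ a Feller property of the full process. The framework does not give this, since $\lambda_e$ and $K_e$ are only Borel. For instance, take a stratum $\R$ carrying the constant diffusion (which satisfies Assumption~\assref{ass:diffusions}), and an edge to an isolated point $p$ with $\lambda_e=\mathbf 1_{(0,\infty)}$ and $K_e=\delta_p$. Then $\E_z[\mathbf 1_{\{p\}}(X_t)]=(1-e^{-t})\mathbf 1_{\{z>0\}}$, although $\mathbf 1_{\{p\}}\in C_0(\widetilde E)$.

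The fix is to keep the approximation entirely at the level of the input, as your step (a) already indicates:
\begin{itemize}
\item Prove the restart identity for functionals of $h_0^{\hat S}$ of the form $\prod_i g_i(\eta(t_i))\,\psi(m)$, with $g_i\in C_0(S_{\alpha(x)})$ and $\psi$ bounded and vaguely continuous. Here only continuity of $z\mapsto\bb Q_z$ is used.
\item Extend by monotone class to all bounded measurable functionals of $h_0^{\hat S}$, and combine with your step (b).
\item Treat an arbitrary bounded Borel $F$ by composing with the measurable map $\Chi$, exactly as in \eqref{eq:cond-exp}.
\end{itemize}
No continuity of $z\mapsto\int F\,d\P_z$ is then needed.

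A minor point: your displayed $\theta_{\hat S}m_0$ shifts atoms forward by $\hat S$. You want $\theta_{\hat S}m_0(B)=m_0(\{(r+\hat S,s,u):(r,s,u)\in B\})$, as in the paper's $m_0^{\cl S}$. This is also the meaning your subsequent claims use.
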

	
	\begin{proof}
		Again, we prove the identity first on \((J,\cl J,\Pi_x)\) and then
		push it forward.		
		Define the lifted stopping time and event by
		\(
		\cl S(j):=S(\Chi^x(j))
		\text{ and }
		\cl A:=(\Chi^x)^{-1}(A).
		\)
		Then \(\cl S\) is a bounded stopping time for \(\widehat{\cl F}^x\), \(\cl A\) belongs to $\widehat{\cl F}_{\cl S}^x$, and
		\(
		\cl A\subset\{\cl S<\tau_1\}.
		\)		
		Define the shifted first input
		\(
		\theta_{\cl S}h_0
		:=
		(\eta_0(\cl S+\cdot),\,m_0^{\cl S}),
		\)
		where, for $r\ge0$ and measurable \(B\subset(0,\overline\Lambda]\times(0,1)\), we set
		\(
		m_0^{\cl S}((0,r]\times B)
		:=
		m_0((\cl S,\cl S+r]\times B).
		\)
		
		Now, we work on the event $\cl A$. We have
		\(
		\Chi^x_r=\eta_0(r),
		\text{ for }0\le r\le \cl S,
		\)
		and in particular
		\(
		\Chi^x_{\cl S}=\eta_0(\cl S).
		\) Moreover, by the strong Markov property of the stratumwise diffusions and of Poisson processes (\cite{zuyev2006strong}), the joint
		input \(h_0=(\eta_0,m_0)\) satisfies
		\(
		\cl L_{\Pi_x}
		(\theta_{\cl S}h_0
		\,|\,
		\cl G^x_{\cl S})
		=
		\bb K(\Chi^x_{\cl S};\cdot ).
		\)
		Since \(\cl S<\tau_1\), the first accepted atom
		after time \(\cl S\) for the original input \(h_0\) is exactly the first
		accepted atom of the shifted input \(\theta_{\cl S}h_0\). Hence
		\(
		\Psi(h_0)
		=
		\Psi(\theta_{\cl S}h_0).
		\)
		Therefore, denoting \(\theta_{\cl S} j=(\theta_S h_0,h_1,h_2,\dots)\), by \eqref{eq:pi-x}, we have 		
		\(\cl L_{\Pi_x}(\theta_{\cl S}j
		\,|\,
		\cl G^x_{\cl S})
		=
		\Pi_{\Chi^x_{\cl S}} \text{ on } \cl A.
		\)
		
		Now the argument is concluded with the same strategy as in the proof of Lemma~\ref{lem:restart-at-jump-times}: apply the measurable construction map, take conditional expectations, use the tower property, and push forward through $\Chi^x$ to get the desired identity.
	\end{proof}
	
	We now give the details of the proof of Theorem~\ref{thm:strong-markov} using Lemmas \ref{lem:restart-at-jump-times} and \ref{lem:before-first-jump}.
	
	\begin{proofof}{Theorem~\ref{thm:strong-markov}}
			We first consider a bounded stopping time $T$.
			Since the jump times satisfy \(\tau_k\to\infty\) \(\P_x\)-a.s.,
			the events
			\(
			A_k:=\{\tau_k\le T<\tau_{k+1}\},
			\  k\ge0,
			\)
			form a countable \(\cl F_T^x\)-measurable partition of the
			probability space.
			
			Fix \(k\ge0\). Define the shifted process after the \(k\)-th jump by
			\(
			X^{(k)}_s:=X_{\tau_k+s},
			\  s\ge0,
			\)
			and its shifted filtration by
			\(
			\cl F^{x,(k)}_s:=\cl F^x_{\tau_k+s}.
			\)
			Also set
			\(
			\rho_k:=\tau_{k+1}-\tau_k,
			\)
			which is the first jump time of the shifted process \(X^{(k)}\), and
			\(
			S_k:=(T-\tau_k)^+.
			\)
			Then \(S_k\) is a bounded \((\cl F^{x,(k)}_s)\)-stopping time and, on
			the event \(A_k\), one has
			\(
			S_k=T-\tau_k<\rho_k,
			\) 
			\(
			X^{(k)}_{S_k}=X_T \text{ and }
			(X^{(k)}_{S_k+r})_{r\ge0}
			=
			(X_{T+r})_{r\ge0}.
			\)

			By Lemma~\ref{lem:restart-at-jump-times}, conditionally on \(\cl F^x_{\tau_k}\), the shifted
			process \(X^{(k)}\) has law \(\P_{X_{\tau_k}}\). Therefore, applying
			Lemma~\ref{lem:before-first-jump} to the shifted process \(X^{(k)}\), at the stopping time \(S_k\),
			on the event \(A_k\subset\{S_k<\rho_k\}\), we obtain
			\(
			\mathbf 1_{A_k}
			\mathbb E_x[F\bigl((X_{T+r})_{r\ge0}\bigr)\,|\,\cl F^{x,(k)}_{S_k}]
			=
			\mathbf 1_{A_k}
			\int_{D_E}F(\omega)\,\P_{X_T}(d\omega).
			\)
			
			On \(A_k\), the stopped \(\sigma\)-fields
			\(\cl F_T^x\) and \(\cl F^{x,(k)}_{S_k}\) coincide locally, that is
			\(
			\{A_k\cap B:B\in\cl F_T^x\}
			=
			\{A_k\cap B:B\in\cl F^{x,(k)}_{S_k}\}.
			\)
			Consequently,
			\(
			\mathbf 1_{A_k}
			\mathbb E_x
			[F\bigl((X_{T+r})_{r\ge0}\bigr)\,|\,\cl F_T^x]
			=
			\mathbf 1_{A_k}
			\int_{D_E}F(\omega)\,\P_{X_T}(d\omega).
			\)
			
			Finally, summing over \(k\ge0\), we obtain
			\begin{equation}
				\label{eq:bounded-st-time}
				\mathbb E_x
				\left[
				F\bigl((X_{T+r})_{r\ge0}\bigr)
				\,\middle|\,
				\cl F_T^x
				\right]
				=
				\int_{D_E}F(\omega)\,\P_{X_T}(d\omega).
			\end{equation}
			We now pass to finite stopping times. Let \(T\) be finite and define
			\(
			T_n:=T\wedge n.
			\)
			Each \(T_n\) is bounded, so \eqref{eq:bounded-st-time} applies.
			Fix \(B\in\cl F_T^x\). For every \(n\), the set
			\(
			B_n:=B\cap\{T\le n\}
			\)
			belongs to \(\cl F_{T_n}^x\). Moreover, on \(\{T\le n\}\), one has
			\(T_n=T\), and therefore
			\(
			F((X_{T_n+r})_{r\ge0})=F((X_{T+r})_{r\ge0}) \text{ and }
			\int_{D_E}F(\omega)\,\P_{X_{T_n}}(d\omega)=\int_{D_E}F(\omega)\,P_{X_{T}}(d\omega).
			\)
			Applying \eqref{eq:bounded-st-time} at \(T_n\) and testing
			against \(\mathbf 1_{B_n}\), we get
			\(
			\mathbb E_x[\mathbf 1_{B_n}F((X_{T+r})_{r\ge0})]
			=
			\mathbb E_x[\mathbf 1_{B_n}\int_{D_E}F(\omega)\,P_{X_{T}}(d\omega)].
			\)
			Since \(T<\infty\) \(\bb P_x\)-a.s., one gets
			\(
			B_n\to B , \ \bb P_x\text{-a.s.}
			\)
			and, because \(F\) is bounded, both sides converge by dominated
			convergence.
			Since $B \in \cl F_T^x$ was arbitrary, this proves the claim.
	\end{proofof}

	\section{Uniqueness}
	\label{appsec:uniqueness}
	
	\begin{proofof}{Lemma~\ref{lem:direct-sum-feller}}
		For \(f=(f_\alpha)_{\alpha \in I} \in C_0(\widetilde E)\) we have
		\(
		\|\widetilde P_t f\|_\infty
		=
		\sup_{\alpha\in I}\|P_t^\alpha f_\alpha\|_\infty
		\leq
		\sup_{\alpha\in I}\|f_\alpha\|_\infty
		=
		\|f\|_\infty.
		\)
		Thus \(\widetilde P_t\) is a contraction.		
		Moreover, \(\widetilde P_t f\in C_0(\widetilde E)\). Indeed, for every $\alpha \in I$, $P_t^\alpha f_\alpha \in C_0(S_\alpha)$ and \(
		\|P_t^\alpha f_\alpha\|_\infty
		\leq
		\|f_\alpha\|_\infty,
		\) so, for every
		\(\varepsilon>0\),
		the set of \(\alpha\)'s for which
		\(\|P_t^\alpha f_\alpha\|_\infty\geq\varepsilon\) is contained in the finite set of
		\(\alpha\)'s for which \(\|f_\alpha\|_\infty\geq\varepsilon\).
		
		The semigroup property follows componentwise from the semigroup property of each
		\((P_t^\alpha)_{t\geq0}\).
		
		It remains to prove strong continuity. Fix \(f\in C_0(\widetilde E)\) and
		\(\varepsilon>0\). Choose a finite set \(F\subset I\) such that
		\(
		\sup_{\alpha\notin F}\|f_\alpha\|_\infty<\varepsilon.
		\)
		Then
		\[
		\begin{aligned}
			\|\widetilde P_t f-f\|_\infty
			&=
			\sup_{\alpha\in I}
			\|P_t^\alpha f_\alpha-f_\alpha\|_\infty  
			\leq
			\max_{\alpha\in F}
			\|P_t^\alpha f_\alpha-f_\alpha\|_\infty
			\vee
			\sup_{\alpha\notin F}
			\|P_t^\alpha f_\alpha-f_\alpha\|_\infty .
		\end{aligned}
		\]
		For \(\alpha\notin F\),
		\(
		\|P_t^\alpha f_\alpha-f_\alpha\|_\infty
		<2\varepsilon,
		\)
		while for the finitely many \(\alpha\in F\), strong continuity of \(P_t^\alpha\) gives
		\(
		\max_{\alpha\in F}
		\|P_t^\alpha f_\alpha-f_\alpha\|_\infty
		\to0
		\text{ as }t\to0.
		\)
		Since $\varepsilon$ was arbitrary, we get
		\(
		\|\widetilde P_t f-f\|_\infty\to0
		\text{ as }t\to0.
		\)
		Thus \((\widetilde P_t)_{t\geq0}\) is a strongly continuous contraction semigroup on
		\(C_0(\widetilde E)\).
		
		We now give a description of the generator $(\widetilde{L}, D(\widetilde{L}))$ of $(\widetilde{P}_t)_{t \ge 0}$.		
		If \(f\in D(\widetilde L)\), then for each \(\alpha\),
		\[
		\frac{P_t^\alpha f_\alpha-f_\alpha}{t}
		=
		\left.
		\frac{\widetilde P_t f-f}{t}
		\right|_{S_\alpha}
		\to
		(\widetilde L f)|_{S_\alpha}, \qquad \text{in }C_0(S_\alpha).
		\]
		Hence \(f_\alpha\in D(L_\alpha)\) and
		\(
		(\widetilde Lf)|_{S_\alpha}
		=
		L_\alpha f_\alpha.
		\)
		Since \(\widetilde Lf\in C_0(\widetilde E)\), the family
		\((L_\alpha f_\alpha)_\alpha\) belongs to \(C_0(\widetilde E)\).
		This gives the inclusion
		\(
		D(\widetilde L)
		\subset
		\{
		f\in C_0(\widetilde E):
		f_\alpha\in D( L_\alpha)\ \forall\alpha,
		\
		\bigl( L_\alpha f_\alpha\bigr)_\alpha
		\in C_0(\widetilde E)
		\}.
		\)
		
		Conversely, suppose that
		\(
		f_\alpha\in D( L_\alpha)
		\text{ for all }\alpha \text{ and }
		g:=(g_\alpha)_\alpha=( L_\alpha f_\alpha)_\alpha\in C_0(\widetilde E).
		\)
		For each \(\alpha\),
		\(
		(P_t^\alpha f_\alpha-f_\alpha)/t
		\to
		g_\alpha
		\)
		in \(C_0(S_\alpha)\). To upgrade this to convergence in the $C_0(\widetilde{E})$ norm,
		fix \(\varepsilon>0\). Choose a finite \(F\subset I\) such that
		\(	\sup_{\alpha\notin F}\|f_\alpha\|_\infty<\varepsilon \text{ and }
		\sup_{\alpha\notin F}\|g_\alpha\|_\infty<\varepsilon.
		\)
		Using the identity
		\(
		(P_t^\alpha f_\alpha-f_\alpha)/t
		=
		t^{-1}\int_0^t P_s^\alpha g_\alpha\,ds,
		\)
		valid for \(f_\alpha\in D( L_\alpha)\), we obtain, for \(\alpha\notin F\),
		\[
		\left\|
		\frac{P_t^\alpha f_\alpha-f_\alpha}{t}
		-
		g_\alpha
		\right\|_\infty
		\leq
		\frac1t\int_0^t \|P_s^\alpha g_\alpha\|_\infty\,ds
		+
		\|g_\alpha\|_\infty
		\leq
		2\|g_\alpha\|_\infty
		<2\varepsilon.
		\]
		On the finite set \(F\), convergence is uniform after taking the maximum over
		\(\alpha\in F\). Therefore \(\|(P_t^\alpha f_\alpha-f_\alpha)/t-g\|_\infty \to 0\).
		Thus \(f\in D(\widetilde L)\) and \(\widetilde Lf=g\), giving
		\(
		D(\widetilde L)
		=
		\{
		f\in C_0(\widetilde E):
		f_\alpha\in D( L_\alpha)\ \forall\alpha,
		\
		\bigl( L_\alpha f_\alpha\bigr)_\alpha
		\in C_0(\widetilde E)
		\}.
		\)
		
		Finally, we prove that \(\cl D_0\) is a core for \(\widetilde L\). Let
		\(f\in D(\widetilde L)\).
		Since \(f,\widetilde L f\in C_0(\widetilde E)\), for every \(\varepsilon>0\) there exists a finite
		set \(F\subset I\) such that
		\(
		\sup_{\alpha\notin F}\|f_\alpha\|_\infty<\varepsilon,
		\text{ and }
		\sup_{\alpha\notin F}\|L_\alpha f_\alpha\|_\infty<\varepsilon.
		\)
		For every \(\alpha\in F\), since \(\cl C_\alpha\) is a core for
		\( L_\alpha\), we can choose \(f^{(n)}_\alpha\in \cl C_\alpha\) such that
		\(
		\|f^{(n)}_\alpha-f_\alpha\|_\infty
		+
		\|L_\alpha f^{(n)}_\alpha- L_\alpha f_\alpha\|_\infty
		\to0.
		\)
		Define \(\overline f^{(n)}\in\cl D_0\) by
		\[
		\overline f^{(n)}_\alpha
		=
		\begin{cases}
			f^{(n)}_\alpha, & \alpha\in F,\\
			0, & \alpha\notin F.
		\end{cases}
		\]
		Then, since $\varepsilon$ was arbitrary, we get
		\(
		\|\overline f^{(n)}-f\|_\infty
		+
		\|\widetilde L \overline f^{(n)}-\widetilde L f\|_\infty
		\to0,
		\) proving the claim.
	\end{proofof}

	\section{Covariance application}
	\label{appsec:cov-application}

	\begin{proofof}{Lemma~\ref{lem:phi-diffeomorphism}}
		Let \(\pi_M:M_r \to \Gr(r,n)\) be the canonical projection and define \(\pi_S:S_r\to\Gr(r,n), \ \Sigma \mapsto \Imm(\Sigma)\). Then \(\phi_r\) is a bundle map over \(\Gr(r,n)\), because
		\(
		\Imm(Q\exp(H)Q^\top)=\Span(Q).
		\)
		
		We first work on local trivializations. Let \(U\subset\Gr(r,n)\) be an open set over which the Stiefel bundle admits a smooth local section \(R_U:U\to\St(n,r)\), so that \(\Span(R_U(P))=P\) for every \(P\in U\). If \([Q,H]\in\pi_M^{-1}(U)\) and \(P=\Span(Q)\), then there is a unique \(G\in O(r)\) such that \(Q=R_U(P)G\). In this trivialization,
		\(
		[Q,H]=[R_U(P)G,H]=[R_U(P),GHG^\top].
		\)
		Thus \(\pi_M^{-1}(U)\) is identified with \(U\times\Sym(r)\) by the map
		\(
		[Q,H]\mapsto (P,GHG^\top).
		\)

		Similarly, \(\pi_S^{-1}(U)\) is identified with \(U\times\Sym^+(r)\) by
		\(
		\Sigma\mapsto (P,R_U(P)^\top\Sigma R_U(P)),
		\text{ with } P=\Imm(\Sigma).
		\)
		The inverse of this trivialization is \((P,A)\mapsto R_U(P)AR_U(P)^\top\).
		
		In these two local trivializations, the map \(\phi_r\) takes the simple form
		\(
		(P,K)\mapsto (P,\exp(K)),
		\text{ with } P\in U,\ K\in\Sym(r).
		\)
		Denoting by $\log$ the principal matrix logarithm, the map \((P,K) \mapsto (P,\exp(K))\)  is a diffeomorphism with inverse 
		\begin{equation}
			\label{eq:local-inverse}
			(P,A)\mapsto (P,\log(A)).
		\end{equation}
		Hence \(\phi_r\) is smooth and locally a diffeomorphism. 
				
		We now check that these local inverses agree on overlaps. Let \(U,V\subset\Gr(r,n)\)
		be two trivializing open sets with $U \cap V \ne \emptyset$. The two local sections
		are related by a unique smooth map \(G_{UV}:U\cap V\to O(r)\) such that
		\(R_V=R_UG_{UV}\). 
		For \(\Sigma \in \pi_S^{-1}(U \cap V)\) and \(P \in U \cap V\), we have, by equivariance of the principal logarithm under orthogonal conjugation, 
		\(
		\log(R_V(P)^\top\Sigma R_V(P))=G_{UV}(P)^\top
		\log(R_U(P)^\top\Sigma R_U(P))
		G_{UV}(P).
		\)
		Thus, \[\begin{aligned}
			[R_V(P),\log(R_V(P)^\top\Sigma R_V(P))]= &[R_U(P)G_{UV}(P), G_{UV}(P)^\top\log(R_U(P)^\top\Sigma R_U(P))G_{UV}(P)]\\=&[R_U(P),\log(R_U(P)^\top\Sigma R_U(P))].
		\end{aligned}\]
		Therefore, the local inverses \eqref{eq:local-inverse} agree on overlaps and glue to a unique global
		smooth map \(\psi_r:S_r\to M_r\).
		Equivalently, this global map is given by
		\(
		\psi_r(\Sigma)=[Q,\log(Q^\top\Sigma Q)],
		\)
		where \(Q\in\St(n,r)\) is any orthonormal frame with
		\(\Span(Q)=\Imm(\Sigma)\). The preceding overlap calculation shows
		precisely that this expression is independent of the choice of \(Q\). Moreover, \(\phi_r \circ \psi_r =\id_{M_r}\) and \(\psi_r \circ \phi_r=\id_{S_r}.\) Thus \(\phi_r\) has a smooth global inverse and is, therefore, a diffeomorphism.
	\end{proofof}
	
	\begin{proofof}{Proposition~\ref{prop:cov-state-space}}
		First, $(\Cov(n),d)$ is a locally compact separable metric space.
		Second, each \(S_r\) is a connected smooth finite-dimensional manifold without
		boundary. For \(r=0\), this is trivial. For \(r\ge1\), this follows from the
		diffeomorphism
		\(
		S_r\simeq
		\St(n,r)\times_{O(r)}\Sym(r),
		\)
		because \(\Gr(r,n)\) is connected smooth manifold without boundary and so is the fiber
		\(\Sym(r)\).
		
		Moreover, \(S_r\) is a Borel subset of \(\Cov(n)\). Indeed, for
		\(0\le r\le n\), let
		\(
		\Cov_{\le r}(n)
		:=
		\{\Sigma\in\Cov(n):\rk(\Sigma)\le r\}.
		\)
		This set is closed, since it is described by the vanishing of all
		\((r+1)\times(r+1)\) minors. Therefore
		\(
		S_r
		=
		\Cov_{\le r}(n)
		\setminus
		\Cov_{\le r-1}(n)
		\)
		is Borel.
		
		Finally, since \(\Cov(n)\) carries the topology induced from \(\Sym(n)\), the subspace
		topology on \(S_r\) inherited from \(\Cov(n)\) agrees with the subspace topology
		inherited from \(\Sym(n)\). Therefore the inclusion \(\iota_r:=S_r \hookrightarrow \Cov(n)\) is a homeomorphism onto its image.
	\end{proofof}
	
	\begin{proofof}{Proposition~\ref{prop:cov-intrastratum-diffusions}}	
		For \(r=0\) the stratum is the singleton \(S_0=\{0\}\), and the diffusion is constant. Hence Assumption \assref{ass:diffusions} is trivially satisfied
		
		Fix now \(1\le r\le n\).		
		The \(Q\)-equation \eqref{eq:q-sde} is a Stratonovich equation with smooth vector fields on the Stiefel
		manifold. Since \(\St(n,r)\) is compact,
		there is no explosion in the \(Q\)-component. The \(H\)-component driven by \eqref{eq:h-sde} is the Euclidean
		Ornstein--Uhlenbeck process
		and is therefore defined for all \(t\ge 0\). Thus \((Q_t,H_t)\) is a conservative
		continuous diffusion on \(\St(n,r)\times \Sym(r)\). Therefore $\Sigma_t=Q_t \exp(H_t) Q_t^\top \in S_r$ for all $t \ge 0$ and, since $\phi_r \circ q_r$ is smooth, the paths $t \mapsto \Sigma_t$ are continuous. This proves \ref{it:D2}.
		
		A short calculation gives the It\^o form of \eqref{eq:q-sde} in $\R^{n\times r}$, that is
		\begin{equation}
			\label{eq:q-ito-form}
			dQ_t=(I_n-Q_tQ_t^\top)dB_t -\frac{n-r}{2}Q_t dt.
		\end{equation}
		The solution to the SDE remains on $\St(n,r)$ and the map $\St(n,r) \ni Q \mapsto I_n-Q Q^\top$ is Lipschitz there. Indeed, for $Q ,R \in \St(n,r)$
		\begin{equation*}
			\begin{aligned}
				\|QQ^\top -RR^\top\|_F&=\|(Q-R)Q^\top+R(Q^\top-R^\top)\|_F \le \|Q-R\|_F \|Q^\top\|_{\mathrm{op}}+\|Q^\top-R^\top\|_F \|R\|_{\mathrm{op}}=2\|Q-R\|_F,
			\end{aligned}
		\end{equation*}
		where $\|\cdot\|_{\mathrm{op}}$ denotes the operator norm.
		It follows that standard estimates that use Burkholder--Davis--Gundy (BDG) inequality and Gronwall lemma (see for example the proof of \cite[Prop. 5.2.9]{karatzas2014brownian}) give, for any $T <+\infty$ and $Q^{(n)} \to Q$ in $\St(n,r)$
		\begin{equation}
			\label{eq:estimate-q}
			\E\left[\sup_{0\le t \le T} \|Q_t^{(n)}-Q_t\|_F\right]\le C_T\|Q^{(n)}-Q\|_F,
		\end{equation}
		where $C_T$ is a constant depending on $T$. 
		
		For the \(H\)-component, we write the explicit Ornstein--Uhlenbeck representation
		\begin{equation}
			\label{eq:ou-solution}
			H_t=e^{-\theta_rt}H+Z_t, \quad Z_t:=\sigma_r\int_0^t e^{-\theta_r(t-s)}\,dW_s
		\end{equation}
		Hence, for any $H^{(n)}\to H$, if the solutions are driven by the same Brownian motion
		\(
		H_t^{(n)}-H_t=e^{-\theta_rt}(H^{(n)}-H),
		\)
		and therefore
		\begin{equation}
			\label{eq:estimate-h}
			\sup_{0\le s\le T}\|H_s^{(n)}-H_s\|_F
			\le
			\|H^{(n)}-H\|_F.
		\end{equation}
		It follows that, for every \(T<\infty\),
		the law \(\widetilde{\bb Q}_{(Q,H)}^r\) of \((Q_t,H_t)_{0\le t\le T}\) depends weakly continuously on the initial condition
		\((Q,H)\). Since the quotient map $q_r$
		is smooth and the construction is \(O(r)\)-equivariant, the same weak continuity
		holds for \([Q,H]\mapsto \overline{\mathbb Q}^r_{[Q,H]}:=\cl L_{[Q,H]}([Q_t,H_t])\).
		
		Now define, for \(\Sigma\in S_r\),
		\(
		\bb Q^r_\Sigma
		:=
		\overline{\mathbb Q}^r_{\phi_r^{-1}(\Sigma)}
		\circ
		\phi_r^{-1}.
		\)
		Because \(\phi_r\) is a diffeomorphism the map
		\(
		\Sigma\mapsto \bb Q^r_\Sigma
		\)
		is weakly continuous from \(S_r\) into \(\cl P(C([0,\infty);S_r))\). Since
		\(C([0,\infty);S_r)\) is a Borel subset of \(D([0,\infty);S_r)\), this also gives Borel
		measurability as a map into \(\cl P(D([0,\infty);S_r))\). This proves \ref{it:D1}.

		We now show that \((\widetilde P_t^r)_{t\ge0}\) is a strongly continuous contraction
		semigroup on \(C_0(\St(n,r)\times \Sym(r))\).		
		The contraction property is immediate:
		\[
		|\widetilde P_t^r\tilde f|=|\mathbb E_{(Q,H)}\bigl[\tilde f(Q_t,H_t)\bigr]|
		\le \mathbb E_{(Q,H)}\bigl[|\tilde f(Q_t,H_t)|\bigr]\le
		\|\tilde f\|_\infty.
		\]
		The semigroup property follows from the Markov property of the lifted SDE.
		
		Let \(\tilde f\in C_0(\St(n,r) \times \Sym(r))\). We first prove that
		\(\widetilde P_t^r\tilde f\) is continuous. Let
		\(
		(Q^{(n)},H^{n})\to(Q,H)
		\) in \(\St(n,r)\times \Sym(r)\). Estimates \eqref{eq:estimate-q} and \eqref{eq:estimate-h} hold.
		
		Thus \((Q_t^{(n)},H_t^{(n)})\to(Q_t,H_t)\) in probability for each fixed \(t\). Since
		\(\tilde f\) is bounded and continuous,
		\(
		\tilde f(Q_t^{(n)},H_t^{(n)})
		\to
		\tilde f(Q_t,H_t)
		\)
		in probability and the random variables are uniformly bounded by
		\(\|\tilde f\|_\infty\). Hence
		\(
		\widetilde P_t^r\tilde f(Q^{(n)},H^{(n)})
		\to
		\widetilde P_t^r\tilde f(Q,H),
		\)
		so \(\widetilde P_t^r\tilde f\) is continuous.
		
		We now prove that \(\widetilde P_t^r\tilde f\) vanishes at infinity. Since
		\(\St(n,r)\) is compact, a sequence \((Q^{n},H^{n})\) goes to infinity in
		\(\St(n,r) \times \Sym(r)\) if and only if \(\|H^{(n)}\|_F\to\infty\). Fix \(\varepsilon>0\). Since
		\(\tilde f\in C_0(\St(n,r)\times \Sym(r))\), there exists \(R>0\) such that
		\(
		|\tilde f(Q,H)|\le\varepsilon
		\text{ whenever } \|H\|_F\ge R.
		\)
		By \eqref{eq:ou-solution},
		for \(t>0\),
		\(
		\P_H(\|H_t\|_F\le R)
		=
		\P(\|e^{-\theta_rt}H+Z_t\|_F\le R)
		\to 0
		\)
		as \(\|H\|_F\to\infty\). Consequently
		\begin{equation}
			\label{eq:prob-pt-estimate}
			\begin{aligned}
				|\widetilde P_t^r\tilde f(Q,H)|&\le \E_{(Q,H)}\left[|\tilde f(Q_t,H_t)|\one_{\{\|H_t \|_F >R\}}\right]+\E_{(Q,H)}\left[|\tilde f(Q_t,H_t)|\one_{\{\|H_t \|_F \le R\}}\right]\\
				&\le
				\varepsilon
				+
				\|\tilde f\|_\infty
				\P_H(\|H_t\|_F\le R)\to \varepsilon, \qquad \|H\|_F\to\infty.
			\end{aligned}
		\end{equation}
		Since \(\varepsilon\)
		is arbitrary, \(\widetilde P_t^r\tilde f\in C_0(\St(n,r) \times \Sym(r))\).
		
		It remains to prove strong continuity. Fix \(\tilde f\in C_0(\St(n,r) \times \Sym(r))\) and
		\(\varepsilon>0\). Choose \(R>0\) such that
		\(
		|\tilde f(Q,H)|\le \varepsilon
		\text{ for }\|H\|_F\ge R.
		\)
		On the compact set
		\(
		K:=\St(n,r)\times\{H:\|H\|_F\le R\},
		\)
		the function \(\tilde f\) is uniformly continuous. 
		For the \(Q\)-component,
		since \(Q_t\in \St(n,r)\), the coefficients of the Itô form \eqref{eq:q-ito-form} are uniformly bounded. More precisely,
		\(
		\|(I_n-QQ^\top)\|_{\mathrm{op}}\le 1,
		\text{ and }
		\|Q\|_F^2=r.
		\)
		Hence, by the BDG inequality, denoting $c:=(n-r)/2$,
		\[
		\begin{aligned}
			\E_Q\left[
			\sup_{0\le s\le t}\|Q_s-Q\|_F^2
			\right]
			&\le 2\left[\E_Q\left[\sup_{0\le s\le t}\left\| \int_0^s \Pi_{Q_u}^\perp dB_u\right\|_F^2\right]+c^2\E_Q\left[\sup_{0\le s\le t}\left\|\int_0^sQ_u \,du\right\|_F^2\right]\right]\\&\le C \E_Q \left\langle\int_0^\cdot \Pi_{Q_u}^\perp dB_u\right\rangle_t +rt^2\le C nrt+rt^2 \le C_{n,r}(t+t^2),
		\end{aligned}
		\]
		where $\langle \cdot \rangle_t$ denotes the quadratic variation.
		Therefore, for every \(\delta>0\),
		\begin{equation}
			\label{eq:q-estimate}
		\sup_{Q\in\St(n,r)}
		\P_Q\left(
		\sup_{0\le s\le t}\|Q_s-Q\|_F>\delta
		\right)
		\le
		\frac{C_{n,r}(t+t^2)}{\delta^2}
		\to 0, \qquad t\to0.
		\end{equation}

		For the $H$-component, by \eqref{eq:ou-solution}, we get, uniformly for \(\|H\|_F\le R\),
		\[
		\mathbb E_H\|H_t-H\|_F^2
		\le
		2(1-e^{-\theta_rt})^2R^2
		+
		\frac{\sigma_r^2d_r}{\theta_r}
		(1-e^{-2\theta_rt}),
		\qquad d_r=\dim\Sym(r).
		\]
		Thus, for every \(\delta>0\),
		\begin{equation}
			\label{eq:h-estimate}
		\sup_{\|H\|_F\le R}
		\P_H(\|H_t-H\|_F>\delta)
		\to0.
		\end{equation}
		Combining \eqref{eq:q-estimate} and \eqref{eq:h-estimate}, we get
		\(
		\sup_{(Q,H)\in K}
		\P_{(Q,H)}
		(
		\|(Q_t,H_t)-(Q,H)\|_F>\delta
		)
		\to0
		\text{ as } t\to0,
		\)
		for every compact set
		\(
		K\subset \St(n,r)\times\Sym(r).
		\)		
		Hence
		\(
		\sup_{(Q,H)\in K}
		|\widetilde P_t^r\tilde f(Q,H)-\tilde f(Q,H)|
		\to 0\text{ as }t\to0.
		\)
		On the complement of a sufficiently large compact set, both
		\(\tilde f(Q,H)\) and, by \eqref{eq:prob-pt-estimate}, \(\widetilde P_t^r\tilde f(Q,H)\) are uniformly small for
		all sufficiently small \(t\). 
		Therefore
		\(
		\|\widetilde P_t^r\tilde f-\tilde f\|_\infty
		\to 0
		\text{ as }t\to0.
		\)
		Thus \((\widetilde P_t^r)_{t\ge0}\) is a strongly continuous contraction semigroup on
		\(C_0(\St(n,r)\times \Sym(r))\).
		
		We now descend to the quotient
		\(M_r.\) If \(f\in C_0(M_r)\), then
		\(\tilde f:=f\circ q_r\) is an \(O(r)\)-invariant element of \(C_0(\St(n,r)\times \Sym(r))\).
		Now, \eqref{eq:invariance} gives $O(r)$-equivariance of $\widetilde P_t^r$, i.e., for $g \in B_b(\St(n,r)\times \Sym(r))$,
		\[
		\begin{aligned}
			\widetilde P_t (g \circ R_G)(Q,H)=\E_{Q,H}\left[g(R_G(Q_t,H_t))\right]=\E_{R_G(Q,H)}\left[g(Q_t,H_t)\right]=(\widetilde P_t g) \circ  R_G(Q,H).
		\end{aligned}\]
		Hence \(\widetilde P_t^r\) preserves invariant functions and thus, for $f \in B_b(M_r)$, there exists a unique
		function \(P_t^{M_r}f\) on \(M_r\) such that
		\(
		(P_t^{M_r}f)\circ q_r
		=
		\widetilde P_t^r(f\circ q_r).
		\)
		
		Since $O(r)$ is compact and acts smoothly on \(\St(n,r)\times\Sym(r)\), \(q_r\) is proper.
		Therefore, if \(\widetilde P_t^r(f\circ q_r)\in C_0(\St(n,r)\times \Sym(r))\), then
		\(P_t^{M_r}f\in C_0(M_r)\). Moreover,
		\(
		\|P_t^{M_r}f\|_\infty=\|\widetilde P_t^r(f \circ q_r)\|_\infty \le \|f \circ q_r\|_{\infty}
		=
		\|f\|_\infty,
		\)
		and
		\(\|P_t^{M_r}f-f\|_\infty
			=
			\|\widetilde P_t^r(f\circ q_r)-f\circ q_r\|_\infty
			\to0.
		\)
		Thus \((P_t^{M_r})_{t\ge0}\) is a strongly continuous contraction semigroup on
		\(C_0(M_r)\).
		
		Finally, transport the semigroup to \(S_r\) through the diffeomorphism \(\phi_r:M_r \to S_r\).
		For \(g\in C_0(S_r)\), define
		\(
		P_t^r g(\Sigma)
		:=
		P_t^{M_r}(g\circ\phi_r)(\phi_r^{-1}(\Sigma)).
		\)
		Since \(\phi_r\) is a homeomorphism, \(g\circ\phi_r\in C_0(M_r)\) and
		\(\|g\circ\phi_r\|_\infty=\|g\|_\infty\). Hence
		\(
		P_t^r C_0(S_r)\subset C_0(S_r),
		\ 
		\|P_t^r g\|_\infty\le \|g\|_\infty,
		\)
		and
		\(
		\|P_t^r g-g\|_\infty
		=
		\|P_t^{M_r}(g\circ\phi_r)-g\circ\phi_r\|_\infty
		\to0.
		\)
		Therefore \((P_t^r)_{t\ge0}\) is a strongly continuous contraction semigroup on
		\(C_0(S_r)\). This proves \ref{it:D3}.
		
		It remains to verify \ref{it:D4}. Again, we work first on the product space \(\St(n,r)\times \Sym(r)\). Set $\cl C_r=C_c^\infty(S_r)$. Since $S_r$ is a smooth second-countable finite-dimensional manifold, it is in particular a locally compact separable metric space. Hence $C_c^\infty(S_r)$, which is uniformly dense in $C_0(S_r)$, is measure-determining.

		We first show that $C^\infty(\St(n,r))$ is a core for $L_{Q}$. As observed in \eqref{eq:c2-subset-domain}, $C^\infty(\St(n,r)) \subset D(L_Q)$.
		Furthermore, we can show that \(C^\infty(\St(n,r))\) is invariant under \(P_t^{\St}\). To do that, we work with the It\^o form \eqref{eq:q-ito-form}. Since the vector fields
		\(X_{ij}\) are smooth on the compact manifold \(\St(n,r)\), they can be extended to vector fields $\widetilde X_{ij} \in C_b^{\infty}(\R^{n \times r}; \R^{n \times r})$. Similarly, we extend the drift as a map $\tilde b  \in C_b^{\infty}(\R^{n \times r}; \R^{n \times r})$ such that \(\tilde b(Q)=\frac{n-r}{2} Q\) for \(Q \in \St(n,r)\).
		
		Now in view of \cite[Thm. 4.6.5]{kunita1990stochastic}, the $Q$-SDE generates a smooth
		stochastic flow of $C^k$-diffeomorphisms for every $ k \ge 1$. In particular the map $Q \mapsto Q_t(\omega)$ is smooth almost surely and its derivatives have finite moments on compact time intervals. Thus, for \(f\in C^\infty(\St(n,r))\), $Q \mapsto f(Q_t(\omega))$ is smooth almost surely and differentiation under the expectation is justified. It follows that
		\(
		P_t^{\St}f(Q)
		\)
		is again smooth in \(Q\). Therefore
		\(
		P_t^{\St} C^\infty(\St(n,r))\subset C^\infty(\St(n,r)).
		\)
		
		Finally, \(C^\infty(\St(n,r))\) is dense in \(C(\St(n,r))\), because $\St(n,r)$ is compact. By \cite[Prop. II.1.7]{engel2000one}, any dense subspace of the generator domain which is invariant under the
		semigroup is a core. Hence \(C^\infty(\St(n,r))\) is a core for \(L_Q\).

		Next, since \(\Sym(r)\) is a finite-dimensional Euclidean vector space, the
		Ornstein--Uhlenbeck semigroup \(P_t^{\Sym}\) is the Mehler semigroup which is described in \cite{lunardi2020ornstein}. In particular, we have
		\(
		\cl S(\Sym(r))\subset D(L_H), \ 
		L_H\cl S(\Sym(r))\subset \cl S(\Sym(r))
		\), and \(P_t^{\Sym}\cl S(\Sym(r))\subset \cl S(\Sym(r))\).	Moreover \(\cl S(\Sym(r))\) is dense in \(C_0(\Sym(r))\). Hence, by \cite[Prop. II.1.7]{engel2000one}, \(\cl S(\Sym(r))\) is a core for \(L_H\).
		
		Define the algebraic tensor product
		\(
		\cl D_\otimes
		:=
		C^\infty(\St(n,r))\otimes \cl S (\Sym(r)),
		\)
		consisting of finite sums
		\[
		u(Q,H)=\sum_{j=1}^N f_j(Q) g_j(H),
		\qquad
		f_j\in C^\infty(\St(n,r)),\quad
		g_j\in\cl S (\Sym(r)).
		\]
		The space \(\cl D_\otimes\) is dense in \(C_0(\St(n,r)\times \Sym(r))\), because
		\(C^\infty(\St(n,r))\) is dense in \(C(\St(n,r))\), \(\cl S (\Sym(r))\) is dense in
		\(C_0(\Sym(r))\), and finite sums of separated functions are dense in
		\(C_0(\St(n,r)\times \Sym(r))\).
		
		Furthermore, \(\cl D_\otimes\subset D(\widetilde L_r)\). Indeed, for a simple
		tensor \(u=f\otimes g\),
		\[
		\frac{\widetilde P_t^r u-u}{t}
		=
		\frac{P_t^{\St}f-f}{t}\otimes P_t^{\Sym} g
		+
		f\otimes\frac{P_t^{\Sym} g- g}{t}.
		\]
		As \(t\to0\), the first term converges uniformly to
		\(
		(L_Qf)\otimes g,
		\)
		and the second term converges uniformly to
		\(
		f\otimes(L_H g).
		\)
		Therefore
		\(
		(\widetilde P_t^r u-u)/t \to 
		\widetilde L_r(f\otimes g).
		\)
		By linearity, the same conclusion holds for every element of \(\cl D_\otimes\).
		
		Moreover \(\widetilde P_t^r(f\otimes g)
		=
		(P_t^{\St}f)\otimes(P_t^{\Sym} g)\) gives \(\widetilde P_t^r\cl D_\otimes\subset \cl D_\otimes .\)
		\cite[Prop. II.1.7]{engel2000one} now implies that
		\(
		\cl D_\otimes
		\)
		is a core for the \(C_0(\St(n,r) \times \Sym(r))\)-generator \(\widetilde L_r\).

		Now, choose \(\chi\in C_c^\infty(\Sym(r))\) such that \(0\le\chi\le1\), \(\chi=1\) on
		\(\{\|H\|_F\le1\}\), and \(\chi=0\) on \(\{\|H\|_F\ge2\}\). Set
		\(
		\chi_m(H):=\chi(H/m)
		\)
		and, for $u \in \cl D_\otimes$,
		\(
		u_m(Q,H):=\chi_m(H) u(Q,H) \in \cl C^\infty(\St(n,r))\otimes C_c^\infty(\Sym(r)).
		\)
		Then 
		\(
		\|u_m-u\|_\infty\to0 \text{ as }m \to +\infty.
		\)
		Since $\chi_m$ is a function of $H$ only, we have
		\(
		L_Qu_m=\chi_m L_Q u\).
		For the $H$-part
		a brief computation gives
		\[
		L_H(u_m)
		=
		\chi_mL_Hu
		+
		\frac{\sigma_r^2}{2}
		\bigl(
		2\langle\nabla_H\chi_m,\nabla_H u\rangle_F
		+
		u\,\Delta_H\chi_m
		\bigr)
		-
		\theta_ru\,\langle H,\nabla_H\chi_m\rangle_F .
		\]
		Therefore
		\[
		\widetilde L_r(u_m)-\widetilde L_r(u)
		=
		(\chi_m-1)\widetilde L_r u
		+
		\frac{\sigma_r^2}{2}
		\bigl(
		2\langle\nabla_H\chi_m,\nabla_H u\rangle_F
		+
		u\,\Delta_H\chi_m
		\bigr)
		-
		\theta_ru\,\langle H,\nabla_H\chi_m\rangle_F .
		\]
		The first term tends to zero uniformly because \(\widetilde L_r u\in C_0(\St(n,r)\times \Sym(r))\). The second
		and third terms are supported in the annulus
		\(
		\{m\le \|H\|_F\le 2m\}.
		\)
		On this annulus,
		\(
		\|\nabla_H\chi_m\| \le C_1/m,\ 
		|\Delta_H\chi_m|\le C_2/m^2\text{ and }
		\|H\|_F \le 2 m.
		\)
		These give, as $m \to \infty$
		\begin{gather*}
			|\langle \nabla_H \chi_m, \nabla_H u \rangle_F| \le \frac{C_1}{m}\|\nabla_H u\|_\infty \to 0,\\
			\left| u \Delta_H \chi_m\right|\le \frac{C_2}{m}\|u\|_{\infty} \to 0,\\
			\|u\langle H,\nabla_H \chi_m \rangle_F\|_\infty \le 2C_1 \sup_{Q \in \St(n,r), \ \|H\|_F \ge m} |u(Q,H)| \to 0.
		\end{gather*}
		Hence
		\(
		\|u_m- u\|_\infty
		+
		\|\widetilde L_r(u_m)- \widetilde L_r u\|_\infty
		\to 0.
		\)
		Therefore \(C^\infty(\St(n,r))\otimes C_c^\infty(\Sym(r))\) is a core for \(\widetilde L_r\) and hence \(C_c^\infty(\St(n,r)\times \Sym(r)) \supset C^\infty(\St(n,r))\otimes C_c^\infty(\Sym(r))\) is as well.

		We now pass to the quotient. Let \(C_0(\St(n,r)\times \Sym(r))^{O(r)}\) denote the closed subspace of \(C_0(\St(n,r)\times \Sym(r))\) consisting of \(O(r)\)-invariant
		functions. For $f \in C_0(M_r)$, \(f \circ q_r\) is invariant and, since $q_r$ is proper, vanishes at infinity. On the other hand, since $q_r(K)$ is compact in $M_r$ for every compact $K \subset \St(n,r)\times \Sym(r)$, \(g \in C_0(\St(n,r)\times \Sym(r))^{O(r)}\)  descends to a function in $C_0(M_r)$. This, together with $\|f \circ q_r\|_{\infty}=\|f\|_\infty$, implies that the pullback
		\[
		q_r^\ast:C_0(M_r)\to C_0(\St(n,r)\times \Sym(r))^{O(r)},
		\quad
		f \mapsto f\circ q_r,
		\]
		is an isometric isomorphism.
		
		We have \(q_r^\ast P_t^{M_r} f=\widetilde P_t^r q_r^\ast f,\) \(q_r^\ast L_r^M  f=\widetilde L_r q_r^\ast f,\) and \(D(L_r^M)=\{f \in C_0(M_r)\,|\, q_r^\ast f \in D(\widetilde L_r)\}.\) 
		Hence it suffices to show that
		\(
		C_c^\infty(\St(n,r)\times \Sym(r))^{O(r)}
		\)
		is a core for $\widetilde L_r$ restricted to \(C_0(\St(n,r)\times \Sym(r))^{O(r)}\). 
		
		Let
		\(f\in D(\widetilde L_r)\cap C_0(\St(n,r) \times \Sym(r))^{O(r)}\). Since
		\(C_c^\infty(\St(n,r) \times \Sym(r))\) is a core for \(\widetilde L_r\), there exist
		\(f_n\in C_c^\infty(\St(n,r) \times \Sym(r))\) such that
		\(
		\|f_n -f\|_\infty \to 0\text{ and }
		\|\widetilde L_r f_n - \widetilde L_r f\|_\infty \to 0.
		\)
		Let \(\mu\) be normalized Haar measure on \(O(r)\), and define
		\(
		\overline f_n
		:=
		\int_{O(r)} f_n\circ R_G\,\mu(dG) .
		\)
		Then \(\overline f_n\in C_c^\infty(\St(n,r) \times \Sym(r))^{O(r)}\). Compact support is
		preserved because \(O(r)\) is compact. Moreover,
		\[
		|\overline f_n(Q,H)-f(Q,H)| \le \int_{O(r)} |f_n(R_G(Q,H))-f(R_G(Q,H))|\, \mu(dG) 
		\le
		\|f_n-f\|_\infty.
		\]
		Now, equivariance of the generator gives
		\(
		\widetilde L_r\overline f_n
		=
		\int_{O(r)}(\widetilde L_r f_n)\circ R_G\,dG.
		\)
		It follows that
		\(
		\|\widetilde L_r\overline f_n-\widetilde L_r f\|_\infty
		\le
		\|\widetilde L_r f_n-\widetilde L_r f\|_\infty .
		\)
		Thus \(C_c^\infty(\St(n,r) \times \Sym(r))^{O(r)}\) is a core for the invariant generator.

		Finally, we transport the statement to the rank-\(r\) stratum through the diffeomorphism
		\(
		\phi_r:M_r\to S_r.
		\)
		The pullback
		\(
		\phi^\ast:C_0(S_r) \to C_0(M_r):
		\ 
		f \mapsto f\circ\phi_r,
		\)
		is an isometric isomorphism.
		It identifies the semigroup on \(S_r\) with the semigroup
		on \(M_r\), and therefore identifies their generators by
		\(
		D(L_r)=\{f \in C_0(S_r)\,|\, \phi^\ast f \in D(L_r^M)\},
		\text{ and }
		L_r^M \phi^\ast f=\phi^\ast L_r f .
		\)
		Moreover, since $\phi$ is a diffeomorphism, \(\phi^\ast C_c^\infty(S_r)=C_c^\infty(M_r)\). Therefore
		\(
		C_c^\infty(S_r)
		\)
		is a core for \(L_r\) and the proof is concluded.
		
	\end{proofof}

	\begin{proofof}{Proposition~\ref{prop:cov-lyapunov-drift}}		
		Let \(1\le r\le n\), and let \(\Sigma\in S_r\). We write
		\(
		\Sigma=Q\exp(H)Q^\top,
		\text{ with }
		Q\in \St(n,r)
		\text{ and }
		H\in\Sym(r).
		\)
		Then, denoting \(F(H):=\Tr(\exp(H))\) we write
		\(
		V(\Sigma)=1+F(H).
		\)		
		We have
		\(
		L_rV(\Sigma)=L_HF(H)=(\sigma_r^2/2)\Delta_{H}F(H)
		-
		\theta_r\langle H,\nabla F(H)\rangle_F .
		\)		
		We estimate the two terms. First, since
		\(
		\nabla F(H)=\exp(H),
		\)
		if \(h_1,\ldots,h_r\) are the eigenvalues of \(H\), then
		\(
		\langle H,\nabla F(H)\rangle_F
		=
		\Tr(H\exp(H))
		=
		\sum_{i=1}^r h_i \exp(h_i).
		\)
		
		Now call $(S_{ij})_{1 \le i \le j \le r}$ the standard basis of $\Sym(r)$, that is, if \((E_{ij})_{1\le i,j\le r}\) is the standard basis of $\R^{r \times r}$, set
		\[S_{ij}=\begin{cases}
			E_{ii}, & i=j,\\
			\frac{1}{\sqrt{2}}(E_{ij}+E_{ji}), & i \ne j.
		\end{cases}\]
		Then
		\[
			\Delta_H F(H)= \sum_{1 \le i \le j \le r} D^2 F(H)\left(S_{ij},S_{ij}\right)=\sum_{i=1}^r \exp(h_i)+ \sum_{1 \le i < j \le r} \frac{\exp(h_i)-\exp(h_j)}{h_i-h_j},
		\]
		with the convention $(\exp(h_i)-\exp(h_j))/(h_i-h_j)=\exp(h_i)$ if $h_i=h_j$.
		Now an application of the mean value theorem gives
		\(
			0 \le \Delta_H F(H) \le \sum_{i=1}^r \exp(h_i)+ \sum_{1 \le i < j \le r}\exp(h_i)+\exp(h_j) \le r \sum_{i=1}^r \exp(h_i).
		\)	
		Consequently,
		\[
			L_rV(\Sigma)
			\le
			\sum_{i=1}^r
			\left(
			-\theta_r h_i+\frac{\sigma_r^2}{2}r
			\right)\exp(h_i),
		\]
		and
		\begin{equation}
			\label{eq:L-abs-bound}
			\frac{\sigma_r^2}{2}r\sum_{i=1}^r e^{h_i}
			+
			\theta_r\sum_{i=1}^r |h_i|e^{h_i} \le C_r\sum_{i=1}^r(1+|h_i|)e^{h_i}.
		\end{equation}

		For fixed \(\lambda_r>0\), the scalar function
		\(
		h\mapsto
		(-\theta_r h+(\sigma_r^2/2)r+\lambda_r)e^h
		\)
		is bounded above on \(\mathbb R\). Therefore there is a constant
		\(c_r<\infty\) such that
		\(
		(
		-\theta_rh_i+(\sigma_r^2/2)r
		)\exp(h_i)
		\le
		-\lambda_r \exp(h_i)+c_r.
		\)
		Summing over \(i=1,\ldots,r\), we get
		\begin{equation}
			\label{eq:diff-estimate}
		L_rV(\Sigma)
		\le
		-\lambda_r\Tr(\exp(H))+rc_r=-\lambda_rV(\Sigma)+b_r, \quad b_r:=(\lambda_r+rc_r).
		\end{equation}

		For \(r=0\), the stratum \(S_0\) consists only of the zero matrix. Hence
		\(
		V(0)=1 \text{ and }
		L_0V(0)=0,
		\)
		and we can select any \(\lambda_0>0\) and set \(b_0:=\lambda_0\).
		Since there are only finitely many ranks, define
		\(
		\lambda:=\min_{0\le r\le n}\lambda_r>0
		\text{ and }
		b_D:=\max_{0\le r\le n}b_r<\infty.
		\)
		Then, for $\Sigma \in S_r, \ 0 \le r \le n$, 
		\(
		L_rV(\Sigma)
		\le
		-\lambda V(\Sigma)+b_D.
		\)
		
		We now focus on the jump contribution.	Since \(R_r\) is supported on \(A_r\), every
		\(\Xi\in\supp R_r\) has all positive eigenvalues bounded above by
		\(\overline \ell\). Therefore
		\(
		V(\Xi)=1+\Tr(\Xi)
		\le
		1+r\overline \ell
		\le
		1+n\overline \ell.
		\)
		Hence, for every \(\Sigma\in\Cov(n)\),
		\begin{equation}
			\label{eq:jump-estimate}
		JV(\Sigma)
		\le
		\overline\Lambda(1+n\overline \ell)
		=:b_J.
		\end{equation}
		
		Combining \eqref{eq:diff-estimate} with \eqref{eq:jump-estimate} we get, for $\Sigma \in S_r$,
		\(
		G(\Sigma)
		:=
		L_rV(\Sigma)+JV(\Sigma)\le
		-\lambda V(\Sigma)+b
		\)
		where $b:= b_D+ b_J$.
		
		Next, we observe that, if $\Sigma \in S_r \cap O_m$, then \(\max_{1 \le i \le r} h_i \le \log(m-1)\). It follows, thanks to \eqref{eq:L-abs-bound}, that 
		\begin{equation}
			\label{eq:L-bounded}
			\sup_{x \in O_m \cap S_r}|L_r V(x)| <\infty.
		\end{equation}
		Let
		\(
		\overline J:=\max_{0\le r\le n}\sup_{\Xi\in A_r}V(\Xi)<\infty.
		\)
		Then we have, for $\Sigma \in O_m$
		\begin{equation}
			\label{eq:J-bounded}
		\int_{\Cov(n)} |V(\Xi)-V(\Sigma)|\,\nu(\Sigma,d\Xi) \le \int_{\Cov(n)} \bigl(V(\Xi)+V(\Sigma)\bigr)\,\nu(\Sigma,d\Xi) \le \overline\Lambda(\overline J+m).
		\end{equation}

		On the event \(\{T_m\le t\}\), the exit at \(T_m\) either occurs continuously, in
		which case \(V(\Sigma_{T_m})=m\), or it occurs by a rank-changing jump, in which
		case the post-jump point belongs to one of the compact landing sets \(A_r\), and
		therefore \(V(\Sigma_{T_m})\le \overline J\). Hence, for every fixed \(t<\infty\),
		\begin{equation}
			\label{eq:V-bounded}
		V(\Sigma_{t\wedge T_m})
		\le
		m\vee \overline J
		\end{equation}
		Applying Itô's formula to \(V\) on each inter-jump interval and adding the
		compensated jump contribution gives that
		\[
		V(\Sigma_{t\wedge T_m})
		-
		V(\Sigma_0)
		-
		\int_0^{t\wedge T_m}G(\Sigma_s)\,ds
		\]
		is a local martingale. By \eqref{eq:L-bounded},\eqref{eq:J-bounded} and
		\eqref{eq:V-bounded}, this local martingale is bounded in \(L^1\) on every
		bounded time interval. Therefore it is a true martingale. Equivalently,
		\[
		V(\Sigma_t^{(m)})
		-
		V(\Sigma_0)
		-
		\int_0^t
		\mathbf 1_{O_m}(\Sigma_s^{(m)})\,G(\Sigma_s^{(m)})\,ds
		\]
		is a martingale. This proves \(V\in D(A_m^\ast)\), with
		\(
		A_m^\ast V(\Sigma)
		=
		G(\Sigma)
		\text{ on } O_m.		\)
		
	Now choose \(k<\infty\) such that
	\(
	k\ge 2b/{\lambda}.
	\)
	If \(\Sigma\notin C_k\), then \(V(\Sigma)>k\), and therefore
	\(
	b\le (\lambda V(\Sigma))/2.
	\)
	Hence
	\(
	A^\ast_mV(\Sigma)
	\le
	-(\lambda V(\Sigma))/2.
	\)
	On the other hand, if \(\Sigma\in C_k\), then
	\(
	A^\ast_m V(\Sigma)
	\le
	-\lambda V(\Sigma)+b
	\le
	-(\lambda V(\Sigma))/2+b.
	\)
	Thus, 
	we get the global drift estimate
	\[
	A^\ast_m V(\Sigma)
	\le
	-\frac{\lambda}{2} V(\Sigma)+b\mathbf 1_{C_k}(\Sigma).
	\]

	\end{proofof}

	\section*{Acknowledgements}
	
	Supported by the University of Bologna through the programme
	``Incentives for collaboration with Universities in North America''.

	\printcredits
	
	\bibliographystyle{cas-model2-names}

\begin{thebibliography}{36}
		\expandafter\ifx\csname natexlab\endcsname\relax\def\natexlab#1{#1}\fi
		\providecommand{\url}[1]{\texttt{#1}}
		\providecommand{\href}[2]{#2}
		\providecommand{\path}[1]{#1}
		\providecommand{\DOIprefix}{doi:}
		\providecommand{\ArXivprefix}{arXiv:}
		\providecommand{\URLprefix}{URL: }
		\providecommand{\Pubmedprefix}{pmid:}
		\providecommand{\doi}[1]{\href{http://dx.doi.org/#1}{\path{#1}}}
		\providecommand{\Pubmed}[1]{\href{pmid:#1}{\path{#1}}}
		\providecommand{\bibinfo}[2]{#2}
		\ifx\xfnm\relax \def\xfnm[#1]{\unskip,\space#1}\fi
		\bibitem[{Ahdida and Alfonsi(2013)}]{AhdidaAlfonsi2013}
		\bibinfo{author}{Ahdida, A.}, \bibinfo{author}{Alfonsi, A.},
		\bibinfo{year}{2013}.
		\newblock \bibinfo{title}{A mean-reverting {SDE} on correlation matrices}.
		\newblock \bibinfo{journal}{Stochastic Process. Appl.} \bibinfo{volume}{123},
		\bibinfo{pages}{1472--1520}.
		\newblock \DOIprefix\doi{10.1016/j.spa.2012.12.008}.
		\bibitem[{Barlow et~al.(1989)Barlow, Pitman and Yor}]{barlow2006walsh}
		\bibinfo{author}{Barlow, M.T.}, \bibinfo{author}{Pitman, J.},
		\bibinfo{author}{Yor, M.}, \bibinfo{year}{1989}.
		\newblock \bibinfo{title}{On {Walsh}'s {Brownian} motions}, in:
		\bibinfo{booktitle}{S{\'e}minaire de Probabilit{\'e}s XXIII}.
		\bibinfo{publisher}{Springer}, \bibinfo{address}{Berlin}. volume
		\bibinfo{volume}{1372} of \textit{\bibinfo{series}{Lecture Notes in
				Mathematics}}, pp. \bibinfo{pages}{275--293}.
		\newblock \DOIprefix\doi{10.1007/BFb0083979}.
		\bibitem[{Bendokat et~al.(2024)Bendokat, Zimmermann and
			Absil}]{bendokat2024grassmann}
		\bibinfo{author}{Bendokat, T.}, \bibinfo{author}{Zimmermann, R.},
		\bibinfo{author}{Absil, P.A.}, \bibinfo{year}{2024}.
		\newblock \bibinfo{title}{A {Grassmann} manifold handbook: basic geometry and
			computational aspects}.
		\newblock \bibinfo{journal}{Adv. Comput. Math.} \bibinfo{volume}{50},
		\bibinfo{pages}{6}.
		\newblock \DOIprefix\doi{10.1007/s10444-023-10090-8}.
		\bibitem[{Bonnabel and Sepulchre(2010)}]{bonnabel2010riemannian}
		\bibinfo{author}{Bonnabel, S.}, \bibinfo{author}{Sepulchre, R.},
		\bibinfo{year}{2010}.
		\newblock \bibinfo{title}{Riemannian metric and geometric mean for positive
			semidefinite matrices of fixed rank}.
		\newblock \bibinfo{journal}{SIAM J. Matrix Anal. Appl.} \bibinfo{volume}{31},
		\bibinfo{pages}{1055--1070}.
		\newblock \DOIprefix\doi{10.1137/080731347}.
		\bibitem[{Brze{\'z}niak and Elworthy(2000)}]{brzezniak2000stochastic}
		\bibinfo{author}{Brze{\'z}niak, Z.}, \bibinfo{author}{Elworthy, K.D.},
		\bibinfo{year}{2000}.
		\newblock \bibinfo{title}{Stochastic differential equations on {Banach}
			manifolds}.
		\newblock \bibinfo{journal}{Methods Funct. Anal. Topol.} \bibinfo{volume}{6},
		\bibinfo{pages}{43--84}.
		\bibitem[{Bujorianu and Lygeros(2006)}]{bujorianu2006toward}
		\bibinfo{author}{Bujorianu, M.L.}, \bibinfo{author}{Lygeros, J.},
		\bibinfo{year}{2006}.
		\newblock \bibinfo{title}{Toward a general theory of stochastic hybrid
			systems}, in: \bibinfo{booktitle}{Stochastic Hybrid Systems: Theory and
			Safety Critical Applications}. \bibinfo{publisher}{Springer},
		\bibinfo{address}{Berlin}. volume \bibinfo{volume}{337} of
		\textit{\bibinfo{series}{Lecture Notes in Control and Information Sciences}},
		pp. \bibinfo{pages}{3--30}.
		\newblock \DOIprefix\doi{10.1007/11587392_1}.
		\bibitem[{Chen and Lou(2019)}]{chen2019brownian}
		\bibinfo{author}{Chen, Z.Q.}, \bibinfo{author}{Lou, S.}, \bibinfo{year}{2019}.
		\newblock \bibinfo{title}{{Brownian} motion on some spaces with varying
			dimension}.
		\newblock \bibinfo{journal}{Ann. Probab.} \bibinfo{volume}{47},
		\bibinfo{pages}{213--269}.
		\newblock \DOIprefix\doi{10.1214/18-AOP1260}.
		\bibitem[{{\c{C}}inlar(2011)}]{ccinlar2011probability}
		\bibinfo{author}{{\c{C}}inlar, E.}, \bibinfo{year}{2011}.
		\newblock \bibinfo{title}{Probability and Stochastics}. volume
		\bibinfo{volume}{261} of \textit{\bibinfo{series}{Graduate Texts in
				Mathematics}}.
		\newblock \bibinfo{publisher}{Springer}, \bibinfo{address}{New York}.
		\newblock \DOIprefix\doi{10.1007/978-0-387-87859-1}.
		\bibitem[{Daley and Vere-Jones(2008)}]{daley2008introductionII}
		\bibinfo{author}{Daley, D.J.}, \bibinfo{author}{Vere-Jones, D.},
		\bibinfo{year}{2008}.
		\newblock \bibinfo{title}{An Introduction to the Theory of Point Processes.
			Vol. II: General Theory and Structure}.
		\newblock Probability and Its Applications. \bibinfo{edition}{2} ed.,
		\bibinfo{publisher}{Springer}, \bibinfo{address}{New York}.
		\newblock \DOIprefix\doi{10.1007/978-0-387-49835-5}.
		\bibitem[{Davis(1984)}]{davis1984piecewise}
		\bibinfo{author}{Davis, M.H.A.}, \bibinfo{year}{1984}.
		\newblock \bibinfo{title}{Piecewise-deterministic {Markov} processes: a general
			class of non-diffusion stochastic models}.
		\newblock \bibinfo{journal}{J. Roy. Statist. Soc. Ser. B} \bibinfo{volume}{46},
		\bibinfo{pages}{353--376}.
		\newblock \DOIprefix\doi{10.1111/j.2517-6161.1984.tb01308.x}.
		\bibitem[{Engel and Nagel(2000)}]{engel2000one}
		\bibinfo{author}{Engel, K.J.}, \bibinfo{author}{Nagel, R.},
		\bibinfo{year}{2000}.
		\newblock \bibinfo{title}{One-Parameter Semigroups for Linear Evolution
			Equations}. volume \bibinfo{volume}{194} of \textit{\bibinfo{series}{Graduate
				Texts in Mathematics}}.
		\newblock \bibinfo{publisher}{Springer}, \bibinfo{address}{New York}.
		\newblock \DOIprefix\doi{10.1007/b97696}.
		\bibitem[{Ethier and Kurtz(1986)}]{ethier2009markov}
		\bibinfo{author}{Ethier, S.N.}, \bibinfo{author}{Kurtz, T.G.},
		\bibinfo{year}{1986}.
		\newblock \bibinfo{title}{Markov Processes: Characterization and Convergence}.
		\newblock Wiley Series in Probability and Mathematical Statistics,
		\bibinfo{publisher}{Wiley}, \bibinfo{address}{New York}.
		\newblock \DOIprefix\doi{10.1002/9780470316658}.
		\bibitem[{Freidlin and Sheu(2000)}]{freidlin2000diffusion}
		\bibinfo{author}{Freidlin, M.}, \bibinfo{author}{Sheu, S.J.},
		\bibinfo{year}{2000}.
		\newblock \bibinfo{title}{Diffusion processes on graphs: stochastic
			differential equations, large deviation principle}.
		\newblock \bibinfo{journal}{Probab. Theory Relat. Fields}
		\bibinfo{volume}{116}, \bibinfo{pages}{181--220}.
		\bibitem[{Glynn and Sigman(1992)}]{glynn1992uniform}
		\bibinfo{author}{Glynn, P.W.}, \bibinfo{author}{Sigman, K.},
		\bibinfo{year}{1992}.
		\newblock \bibinfo{title}{Uniform {Cesaro} limit theorems for synchronous
			processes with applications to queues}.
		\newblock \bibinfo{journal}{Stochastic Process. Appl.} \bibinfo{volume}{40},
		\bibinfo{pages}{29--43}.
		\newblock \DOIprefix\doi{10.1016/0304-4149(92)90135-D}.
		\bibitem[{Hajri and Raimond(2016)}]{HajriRaimond2016}
		\bibinfo{author}{Hajri, H.}, \bibinfo{author}{Raimond, O.},
		\bibinfo{year}{2016}.
		\newblock \bibinfo{title}{Stochastic flows and an interface {SDE} on metric
			graphs}.
		\newblock \bibinfo{journal}{Stochastic Process. Appl.} \bibinfo{volume}{126},
		\bibinfo{pages}{33--65}.
		\newblock \DOIprefix\doi{10.1016/j.spa.2015.07.014}.
		\bibitem[{Hsu(2002)}]{hsu2002stochastic}
		\bibinfo{author}{Hsu, E.P.}, \bibinfo{year}{2002}.
		\newblock \bibinfo{title}{Stochastic Analysis on Manifolds}.
		volume~\bibinfo{volume}{38} of \textit{\bibinfo{series}{Graduate Studies in
				Mathematics}}.
		\newblock \bibinfo{publisher}{American Mathematical Society},
		\bibinfo{address}{Providence, RI}.
		\newblock \DOIprefix\doi{10.1090/gsm/038}.
		\bibitem[{Hu et~al.(2000)Hu, Lygeros and Sastry}]{hu2000towards}
		\bibinfo{author}{Hu, J.}, \bibinfo{author}{Lygeros, J.},
		\bibinfo{author}{Sastry, S.}, \bibinfo{year}{2000}.
		\newblock \bibinfo{title}{Towards a theory of stochastic hybrid systems}, in:
		\bibinfo{booktitle}{Hybrid Systems: Computation and Control},
		\bibinfo{publisher}{Springer}, \bibinfo{address}{Berlin}. pp.
		\bibinfo{pages}{160--173}.
		\newblock \DOIprefix\doi{10.1007/3-540-46430-1_16}.
		\bibitem[{Ikeda and Watanabe(1989)}]{ikeda2014stochastic}
		\bibinfo{author}{Ikeda, N.}, \bibinfo{author}{Watanabe, S.},
		\bibinfo{year}{1989}.
		\newblock \bibinfo{title}{Stochastic Differential Equations and Diffusion
			Processes}. volume~\bibinfo{volume}{24} of
		\textit{\bibinfo{series}{North-Holland Mathematical Library}}.
		\newblock \bibinfo{edition}{2} ed., \bibinfo{publisher}{North-Holland},
		\bibinfo{address}{Amsterdam}.
		\bibitem[{Karatzas and Shreve(1991)}]{karatzas2014brownian}
		\bibinfo{author}{Karatzas, I.}, \bibinfo{author}{Shreve, S.E.},
		\bibinfo{year}{1991}.
		\newblock \bibinfo{title}{Brownian Motion and Stochastic Calculus}. volume
		\bibinfo{volume}{113} of \textit{\bibinfo{series}{Graduate Texts in
				Mathematics}}.
		\newblock \bibinfo{edition}{2} ed., \bibinfo{publisher}{Springer},
		\bibinfo{address}{New York}.
		\newblock \DOIprefix\doi{10.1007/978-1-4612-0949-2}.
		\bibitem[{Khasminskii et~al.(2007)Khasminskii, Zhu and
			Yin}]{KhasminskiiZhuYin2007}
		\bibinfo{author}{Khasminskii, R.Z.}, \bibinfo{author}{Zhu, C.},
		\bibinfo{author}{Yin, G.}, \bibinfo{year}{2007}.
		\newblock \bibinfo{title}{Stability of regime-switching diffusions}.
		\newblock \bibinfo{journal}{Stochastic Process. Appl.} \bibinfo{volume}{117},
		\bibinfo{pages}{1037--1051}.
		\newblock \DOIprefix\doi{10.1016/j.spa.2006.12.001}.
		\bibitem[{Kunita(1990)}]{kunita1990stochastic}
		\bibinfo{author}{Kunita, H.}, \bibinfo{year}{1990}.
		\newblock \bibinfo{title}{Stochastic Flows and Stochastic Differential
			Equations}. volume~\bibinfo{volume}{24} of \textit{\bibinfo{series}{Cambridge
				Studies in Advanced Mathematics}}.
		\newblock \bibinfo{publisher}{Cambridge University Press},
		\bibinfo{address}{Cambridge}.
		\bibitem[{Last and Penrose(2017)}]{LastPenrose2017}
		\bibinfo{author}{Last, G.}, \bibinfo{author}{Penrose, M.},
		\bibinfo{year}{2017}.
		\newblock \bibinfo{title}{Lectures on the Poisson Process}.
		volume~\bibinfo{volume}{7} of \textit{\bibinfo{series}{Institute of
				Mathematical Statistics Textbooks}}.
		\newblock \bibinfo{publisher}{Cambridge University Press},
		\bibinfo{address}{Cambridge}.
		\newblock \DOIprefix\doi{10.1017/9781316104477}.
		\bibitem[{Lewis and Shedler(1979)}]{lewis1979simulation}
		\bibinfo{author}{Lewis, P.A.W.}, \bibinfo{author}{Shedler, G.S.},
		\bibinfo{year}{1979}.
		\newblock \bibinfo{title}{Simulation of nonhomogeneous {Poisson} processes by
			thinning}.
		\newblock \bibinfo{journal}{Naval Res. Logist. Quart.} \bibinfo{volume}{26},
		\bibinfo{pages}{403--413}.
		\newblock \DOIprefix\doi{10.1002/nav.3800260304}.
		\bibitem[{Lunardi et~al.(2020)Lunardi, Metafune and
			Pallara}]{lunardi2020ornstein}
		\bibinfo{author}{Lunardi, A.}, \bibinfo{author}{Metafune, G.},
		\bibinfo{author}{Pallara, D.}, \bibinfo{year}{2020}.
		\newblock \bibinfo{title}{The {Ornstein--Uhlenbeck} semigroup in finite
			dimension}.
		\newblock \bibinfo{journal}{Philos. Trans. Roy. Soc. A} \bibinfo{volume}{378},
		\bibinfo{pages}{20200217}.
		\newblock \DOIprefix\doi{10.1098/rsta.2020.0217}.
		\bibitem[{Mao and Yuan(2006)}]{mao2006stochastic}
		\bibinfo{author}{Mao, X.}, \bibinfo{author}{Yuan, C.}, \bibinfo{year}{2006}.
		\newblock \bibinfo{title}{Stochastic Differential Equations with {Markovian}
			Switching}.
		\newblock \bibinfo{publisher}{Imperial College Press},
		\bibinfo{address}{London}.
		\newblock \DOIprefix\doi{10.1142/p473}.
		\bibitem[{Mayerhofer et~al.(2011)Mayerhofer, Pfaffel and
			Stelzer}]{MayerhoferPfaffelStelzer2011}
		\bibinfo{author}{Mayerhofer, E.}, \bibinfo{author}{Pfaffel, O.},
		\bibinfo{author}{Stelzer, R.}, \bibinfo{year}{2011}.
		\newblock \bibinfo{title}{On strong solutions for positive definite jump
			diffusions}.
		\newblock \bibinfo{journal}{Stochastic Process. Appl.} \bibinfo{volume}{121},
		\bibinfo{pages}{2072--2086}.
		\newblock \DOIprefix\doi{10.1016/j.spa.2011.05.006}.
		\bibitem[{Meyer(1975)}]{meyer1975renaissance}
		\bibinfo{author}{Meyer, P.A.}, \bibinfo{year}{1975}.
		\newblock \bibinfo{title}{Renaissance, recollements, m{\'e}langes,
			ralentissement de processus de {Markov}}.
		\newblock \bibinfo{journal}{Ann. Inst. Fourier (Grenoble)}
		\bibinfo{volume}{25}, \bibinfo{pages}{465--497}.
		\newblock \DOIprefix\doi{10.5802/aif.593}.
		\bibitem[{Meyn and Tweedie(1993a)}]{meyn1993stability}
		\bibinfo{author}{Meyn, S.P.}, \bibinfo{author}{Tweedie, R.L.},
		\bibinfo{year}{1993}a.
		\newblock \bibinfo{title}{Stability of {Markovian} processes ii:
			continuous-time processes and sampled chains}.
		\newblock \bibinfo{journal}{Adv. Appl. Probab.} \bibinfo{volume}{25},
		\bibinfo{pages}{487--517}.
		\newblock \DOIprefix\doi{10.2307/1427521}.
		\bibitem[{Meyn and Tweedie(1993b)}]{meyn1993stability3}
		\bibinfo{author}{Meyn, S.P.}, \bibinfo{author}{Tweedie, R.L.},
		\bibinfo{year}{1993}b.
		\newblock \bibinfo{title}{Stability of {Markovian} processes iii:
			{Foster--Lyapunov} criteria for continuous-time processes}.
		\newblock \bibinfo{journal}{Adv. Appl. Probab.} \bibinfo{volume}{25},
		\bibinfo{pages}{518--548}.
		\newblock \DOIprefix\doi{10.2307/1427522}.
		\bibitem[{Meyn and Tweedie(2009)}]{meyn2012markov}
		\bibinfo{author}{Meyn, S.P.}, \bibinfo{author}{Tweedie, R.L.},
		\bibinfo{year}{2009}.
		\newblock \bibinfo{title}{Markov Chains and Stochastic Stability}.
		\newblock Cambridge Mathematical Library. \bibinfo{edition}{2} ed.,
		\bibinfo{publisher}{Cambridge University Press},
		\bibinfo{address}{Cambridge}.
		\newblock \DOIprefix\doi{10.1017/CBO9780511626630}.
		\bibitem[{Sturm(1998)}]{sturm1998diffusion}
		\bibinfo{author}{Sturm, K.T.}, \bibinfo{year}{1998}.
		\newblock \bibinfo{title}{Diffusion processes and heat kernels on metric
			spaces}.
		\newblock \bibinfo{journal}{Ann. Probab.} \bibinfo{volume}{26},
		\bibinfo{pages}{1--55}.
		\newblock \DOIprefix\doi{10.1214/aop/1022855410}.
		\bibitem[{Walsh(1978)}]{walsh1978diffusion}
		\bibinfo{author}{Walsh, J.B.}, \bibinfo{year}{1978}.
		\newblock \bibinfo{title}{A diffusion with a discontinuous local time}.
		\newblock \bibinfo{journal}{Ast{\'e}risque} \bibinfo{volume}{52--53},
		\bibinfo{pages}{37--45}.
		\bibitem[{Xi(2009)}]{Xi2009}
		\bibinfo{author}{Xi, F.}, \bibinfo{year}{2009}.
		\newblock \bibinfo{title}{Asymptotic properties of jump-diffusion processes
			with state-dependent switching}.
		\newblock \bibinfo{journal}{Stochastic Process. Appl.} \bibinfo{volume}{119},
		\bibinfo{pages}{2198--2221}.
		\newblock \DOIprefix\doi{10.1016/j.spa.2008.11.001}.
		\bibitem[{Xi and Zhu(2018)}]{XiZhu2018}
		\bibinfo{author}{Xi, F.}, \bibinfo{author}{Zhu, C.}, \bibinfo{year}{2018}.
		\newblock \bibinfo{title}{On the martingale problem and {Feller} and strong
			{Feller} properties for weakly coupled {L{\'e}vy}-type operators}.
		\newblock \bibinfo{journal}{Stochastic Process. Appl.} \bibinfo{volume}{128},
		\bibinfo{pages}{4277--4308}.
		\newblock \DOIprefix\doi{10.1016/j.spa.2018.02.005}.
		\bibitem[{Yin and Zhu(2010)}]{yin2009hybrid}
		\bibinfo{author}{Yin, G.G.}, \bibinfo{author}{Zhu, C.}, \bibinfo{year}{2010}.
		\newblock \bibinfo{title}{Hybrid Switching Diffusions: Properties and
			Applications}. volume~\bibinfo{volume}{63} of
		\textit{\bibinfo{series}{Stochastic Modelling and Applied Probability}}.
		\newblock \bibinfo{publisher}{Springer}, \bibinfo{address}{New York}.
		\newblock \DOIprefix\doi{10.1007/978-1-4419-1105-6}.
		\bibitem[{Zuyev(2006)}]{zuyev2006strong}
		\bibinfo{author}{Zuyev, S.}, \bibinfo{year}{2006}.
		\newblock \bibinfo{title}{Strong {Markov} property of {Poisson} processes and
			{Slivnyak} formula}, in: \bibinfo{booktitle}{Case Studies in Spatial Point
			Process Modeling}. \bibinfo{publisher}{Springer}, \bibinfo{address}{New
			York}. volume \bibinfo{volume}{185} of \textit{\bibinfo{series}{Lecture Notes
				in Statistics}}, pp. \bibinfo{pages}{77--84}.
		\newblock \DOIprefix\doi{10.1007/0-387-31144-0_3}.
		
	\end{thebibliography}
	

	
	
\end{document}